\newcommand{\setleftmargin}[1]{
        \addtolength{\textwidth}{\oddsidemargin}
        \addtolength{\textwidth}{1in}
        \addtolength{\textwidth}{-#1}
        \setlength{\oddsidemargin}{-1in}
        \addtolength{\oddsidemargin}{#1}
        \setlength{\evensidemargin}{\oddsidemargin}
}
\newcommand{\setrightmargin}[1]{
        \setlength{\textwidth}{8.5in}
        \addtolength{\textwidth}{-\oddsidemargin}
        \addtolength{\textwidth}{-1in}
        \addtolength{\textwidth}{-#1}
}
\def\ZZ{\mathbb{Z}}
\def\QQ{\mathbb{Q}}
\def\RR{\mathbb{R}}
\def\FF{\mathbb{F}}
\def\CC{\mathbb{C}}
\def\PP{\mathbb{P}}
\def\XX{\mathbb X}
\def\Cater{\text{\boldmath $C$}}
\def\cL{\mathcal L}
\def\cT{\mathcal T}
\def\vert{\mathrm{vert}}
\def\even{\mathrm{even}}
\def\odd{\mathrm{odd}}
\def\lk{\ell k}
\def\Spec{\mathrm{Spec}}
\def\Flag{\mathcal{F}\hspace{-.03cm}\ell\hspace{-.03cm}\mathit{ag}}
\def\into{\hookrightarrow}
\def\Vol{\mathrm{Vol}}
\def\Span{\mathrm{Span}}
\def\Id{\mathrm{Id}}
\def\GL{\mathrm{GL}}
\def\SL{\mathrm{SL}}
\def\SO{\mathrm{SO}}
\def\Spin{\mathrm{Spin}}
\def\Pin{\mathrm{Pin}}
\def\IG{\mathrm{IG}}
\def\LG{\mathrm{LG}}
\def\G{\mathrm{G}}
\def\Cl{\mathit{Cliff}}
\def\Yo{\put(3,9){$\scriptscriptstyle \circ$}{Y}}
\def\Xo{\put(4,9){$\scriptscriptstyle \circ$}{X}}
\def\inn{\mathbin{\vrule width1.1ex height.4pt\vrule height1.2ex}\hspace{.1cm}}
\def\bwedge{\bigwedge \nolimits}
\def\ooplus{\, \oplus \, }
\def\Tmin{T_{\text{\rm min}}}
\def\dontshow#1{}
\newtheorem{conjecture}{Conjecture}[section]
\newtheorem{theorem}[conjecture]{Theorem}
\newtheorem{Proposition}[conjecture]{Proposition}
\newtheorem{lemma}[conjecture]{Lemma}
\newtheorem{corollary}[conjecture]{Corollary}
\newtheorem{definition}[conjecture]{Definition}
\newtheorem*{lemmagn}{Lemma~\ref{gn}}
\newtheorem*{lemmaLA}{Lemma~\ref{LA}}
\newtheorem*{corollarycK}{Proposition~\ref{cK}}
\newtheorem*{PropositionKey}{Proposition~\ref{Key}}
\newtheorem*{PropositionQus}{Proposition~\ref{Qus}}
\newtheorem*{theorem1}{Theorem 1}
\newtheorem*{theorem2}{Theorem 2}
\begin{document}

\title{The Multidimensional Cube Recurrence}
\date{}
\author{Andre Henriques and David E Speyer \footnote{The second author was supported by a Research Fellowship from the Clay Mathematics Institute}}

\maketitle

\abstract{We introduce a recurrence which we term the multidimensional cube recurrence, generalizing the octahedron recurrence studied by Propp, Fomin and Zelevinsky, Speyer, and Fock and Goncharov and the three-dimensional cube recurrence studied by Fomin and Zelevinsky, and Carroll and Speyer. The states of this recurrence are indexed by tilings of a polygon with rhombi, and the variables in the recurrence are indexed by vertices of these tilings. We travel from one state of the recurrence to another by performing elementary flips. We show that the values of the recurrence are independent of the order in which we perform the flips; this proof involves nontrivial combinatorial results about rhombus tilings which may be of independent interest. We then show that the multidimensional cube recurrence exhibits the Laurent phenomenon -- any variable is given by a Laurent polynomial in the other variables. We recognize a special case of the multidimensional cube recurrence as giving explicit equations for the isotropic Grassmannians $\IG(n-1,2n)$. Finally, we describe a tropical version of the multidimensional cube recurrence and show that, like the tropical octahedron recurrence, it propagates certain linear inequalities.}


\section{Introduction}
\subsection{Statement of results}\label{S11}

Let $n\geq 3$ be an integer, and $A=(a_1,\dots,a_n)$ be a sequence of positive integers. 
Let $e_1, \dots ,e_n$ be the standard basis of $\ZZ^n$.  
Define $\Pi=\Pi(A)$ to be the subset \dontshow{dfP}
\begin{equation}\label{dfP}
\Pi:=\prod_{i=1}^n\{0, \ldots ,a_i\}
\end{equation}
of $\ZZ^n$, so that $| \Pi|=\prod_{i=1}^n (a_i+1)$.
Consider a collection of variables $x_I$ indexed by $I \in \Pi$ obeying the relations \dontshow{trc}
\begin{equation}\label{trc}
x_{I+e_j+e_\ell}\, x_{I+e_k}
=  
x_{I}\, x_{I+e_j+e_k+e_\ell} 
+ x_{I+e_j+e_k}\, x_{I+e_\ell} 
+ x_{I+e_k+e_\ell}\, x_{I+e_j} 
\end{equation}
for $1 \leq j<k<\ell \leq n$.
Since the eight variables involved in this recurrence lie at the vertices of a cube, 
we refer to these relations as the {\em multidimensional cube recurrence}. 
The use of the term ``recurrence'' will become clear in Section~\ref{scz}. 
In the case where $n=3$, this was studied in unpublished work of Propp, and in~\cite{CarrSpey}.

Let $\Yo=\Yo(A)$ denote the set of solutions of these equations in $(\CC^\times)^{\prod_{i=1}^n (a_i+1)}=(\CC^\times)^{\Pi(A)}$. 
Call an element $(i_1, \ldots, i_n) \in \Pi$ \emph{even} or \emph{odd} depending on the parity of $i_1+\cdots + i_n$,
and let $(\CC^\times)^2$ act on $(\CC^\times)^{\Pi}$ by $(t,u)$ multiplying the even coordinates by $t$, and the odd coordinates by $u$. 
This action preserves $\Yo$. 
Let $\Xo$ be the quotient $\Yo/(\CC^\times)^2$, and let
$X$ be its closure in the product 
$\CC\PP^{\lceil (1/2) \prod_{i=1}^n (a_i+1) \rceil} \times \CC\PP^{\lfloor  (1/2) \prod_{i=1}^n (a_i+1)  \rfloor}
=\PP(\CC^{\Pi^\even})\times \PP(\CC^{\Pi^\odd})
$.
We will write $\Xo(A)$ and $X(A)$ when we need to emphasize the dependance on $A$.
Our main results are the following:

\begin{theorem1}
$\Yo$ is an irreducible variety of dimension $\sum_{i<j} a_i a_j+\sum_i a_i +1$. 
There is a certain collection of transcendence bases for the coordinate ring of $\Yo$ indexed by tilings of a certain two dimensional zonotope; 
any one of the $x_I$'s is given by a Laurent polynomial in terms of any of these bases.
\end{theorem1}

\begin{theorem2}
If all of the $a_i$'s are $1$, then $X$ is isomorphic to $\IG(n-1,2n)$, the space of $(n-1)$-planes in $\CC^{2n}$ 
that are isotropic with respect to a given non-degenerate quadratic form.
\end{theorem2}

We would like to recognize the variety $X$ when the $a_i$'s are larger than $1$.

\subsection{Motivation}

Let us explain why we began this investigation, and where we hope that it will go. 
Our work on the multidimensional cube recurrence is
motivated by the analogy with the multidimensional octahedron recurrence.
Let $n$, $m$ be positive integers, and let
$\Delta =\Delta(n,m)$ be the set of tuples $(i_0, i_1, \dots, i_n)\in (\ZZ_{\ge 0})^{n+1}$ satisfying $\sum i_k=m$. 
The term ``multidimensional octahedron recurrence" refers to the equations \dontshow{OctRecur}
\begin{equation}
\qquad x_{I+e_i+e_k}\, x_{I+e_j+e_{\ell}}=x_{I+e_i+e_j}\, x_{I+e_k+e_{\ell}} + x_{I+e_j+e_k}\, x_{I+e_\ell+e_i}\,,\qquad i<j<k<\ell, \label{OctRecur}
\end{equation}
in $(\CC^\times)^\Delta$.
The zero locus of these equations was identified by 
Fock and Goncharov~\cite[Section 9]{FG} with an open part of $\Flag_m^{n+1}/SL_m$,
where $\Flag_m$ denotes the space of flags $0\subset V_1\subset \ldots\subset V_{m-1}\subset V_m=\CC^m$
equipped with volume forms $\omega_i\in\bigwedge^iV_i$.
The coordinate ring of $\Flag_m^{n+1}/SL_m$ is an example of a cluster algebra:
it comes with distinguished collections of transcendence bases $\{x_I\}_{I\in S}$, parametrized by special subsets $S\subset \Delta$,
and every $x_I$ can be expressed as a Laurent polynomial in terms of any of these bases. 
Those subsets $S$ are in bijective correspondence with certain bi-colored polyhedral subdivisions of an $(n+1)$-gon, 
and one can use the recurrence (\ref{OctRecur}) to go from any one of these bases to any other one.
\begin{gather}
\psfrag{squi}{$\rightsquigarrow$}
\psfrag{a}{$\scriptstyle a$}\psfrag{b}{$\scriptstyle b$}\psfrag{c}{$\scriptstyle c$}\psfrag{d}{$\scriptstyle d$}
\psfrag{e}{$\scriptstyle e$}\psfrag{f}{$\scriptstyle f$}\psfrag{g}{$\scriptstyle g$}\psfrag{h}{$\scriptstyle h$}
\psfrag{i}{$\scriptstyle i$}\psfrag{j}{$\scriptstyle j$}\psfrag{k}{$\scriptstyle k$}\psfrag{l}{$\scriptstyle l$}
\psfrag{m}{$\scriptstyle m$}\psfrag{n}{$\scriptstyle n$}\psfrag{o}{$\scriptstyle o$}\psfrag{p}{$\scriptstyle p$}
\psfrag{q}{$\hspace{.05mm}\scriptstyle r$}\psfrag{r}{$\scriptstyle q$}\psfrag{s}{$\scriptstyle s$}\psfrag{t}{$\scriptstyle t$}\psfrag{u}{$\scriptstyle u$}
\psfrag{fq}{$\scriptstyle \frac{ek+bn}{f}$}
\nonumber\psfig{file=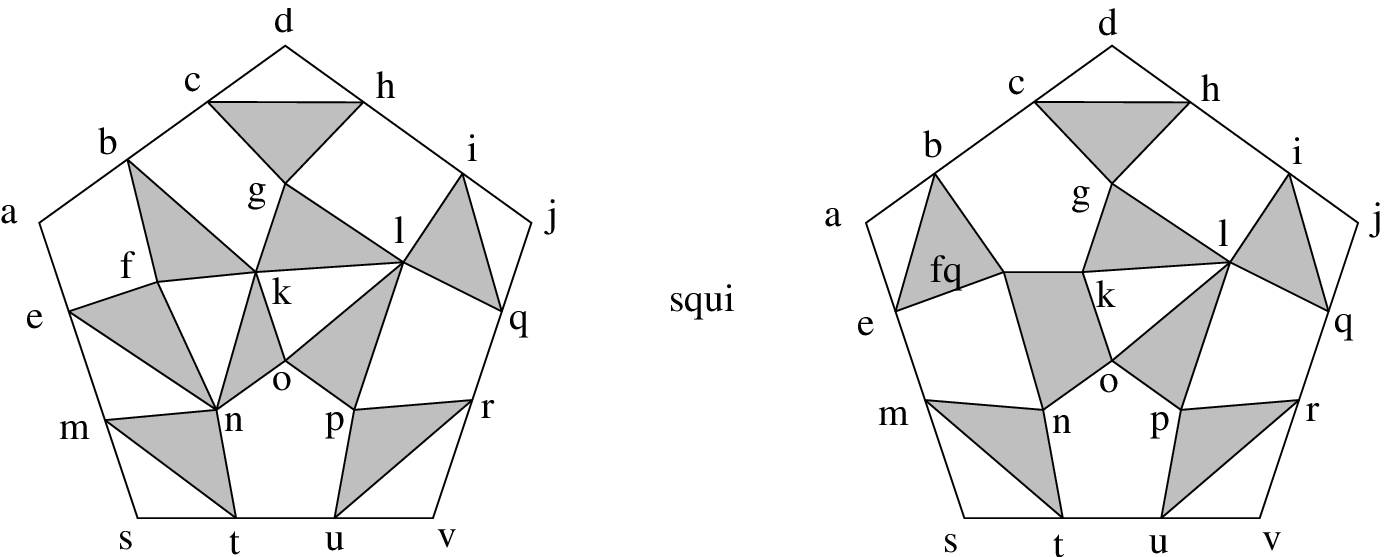, width=9cm}\\
\nonumber\text{The step of the octahedron recurrence (case $n=4$, $m=3$).}
\end{gather}
It is a major problem in the theory of cluster algebras to give combinatorial formulas for the above mentioned Laurent polynomials. 
In the case of the octahedron recurrence with $n=3$, this problem was solved by the second author in~\cite{Speyer}. 


In addition to the appearance of elegant algebraic varieties, 
the multidimensional octahedron recurrence has a connection to the representation theory of $\GL_m$. 
To see this, consider the tropical version of (\ref{OctRecur}) \dontshow{trooc}
\begin{equation}\label{trooc}
x_{I+e_i+e_k}+ x_{I+e_j+e_{\ell}}\,=\,\max\big(x_{I+e_i+e_j}+ x_{I+e_k+e_{\ell}}\,,\, x_{I+e_j+e_k}+ x_{I+e_\ell+e_i}\big),
\end{equation}
and introduce the following inequalities: \dontshow{ineco}
\begin{equation}\label{ineco}
x_I+x_{I+e_i-e_k}\,\ge\, x_{I+e_i-e_j}+x_{I+e_j-e_k}.
\end{equation}
A {\em hive} is a solution to (\ref{ineco}) in $\ZZ^{\Delta(2,m)}$, see~\cite{KTW}.
In other words, it is a triangular array of integers subject to the above inequalities.
The recurrence (\ref{trooc}) turns out to propagate these inequalities. 
This fact was then used by Knutson, Tao, and Woodward~\cite{KTW} in the case $n=3$, in order 
to identify the Littlewood-Richardson coefficients with the number of hives subject to certain boundary conditions. 
Their computation was later refined by the first author and by Kamnitzer~\cite{HK} in order to describe the associator in the category of $\mathfrak{gl}_m$-crystals. 
The case $n=4$ of the recurrence is related to the fact that the associator in this category satisfies the pentagon axiom.

Our motivation for considering the cube recurrence is that it appears to have combinatorial structures which are 
closely analogous to those of the octahedron recurrence.
Even though it does not fit into the formalism of cluster algebras,
one has special collections of variables 
such that all the other variables can be expressed as Laurent polynomials in terms of these collections. 
We also encounter certain classical varieties from the theory of Lie groups and, 
in the tropical version, there are inequalities which the recurrence inexplicably propagates.

The case $n=3$ of the cube recurrence was first investigated by Jim Propp, who conjectured the Laurentness property, 
and first mentioned in print by Fomin and Zelevinsky, who proved the Laurentness property~\cite{FZ}. 
G. Carroll and the second author investigated the combinatorics of the $n=3$ case in~\cite{CarrSpey}. 
To our knowledge, the higher dimensional case has not been discussed before this paper.

This research benefited from conversations with many other mathematicians. We would particularly like to thank Joel Kamnitzer, Allen Knutson, Jim Propp and Dylan Thurston.

\section{The recurrence\dontshow{scz}}\label{scz}

Let 
\begin{equation}\label{dfC}
C=C(A) :=\prod_{i=1}^n[0,a_i]
\end{equation}
be the obvious cubical complex with vertex set $\Pi$. 
Pick $0 < \theta_1 < \cdots < \theta_n < \pi$, and let $v_i := (\cos \theta_i, \sin \theta_i) \in \RR^2$. 
Let $\pi : C \to \RR^2$ be the map $(x_1, \ldots, x_n) \mapsto \sum x_i v_i$ and let $P:=\pi(C)$.
The polygon $P$ has $2n$ vertices, namely, $\pi(0)$, $\pi(e_1)$, $\pi(e_1+e_2)$, \dots, $\pi(e_1+e_2+\ldots+e_{n-1})$, $\pi(e_1+e_2+\ldots+e_{n-1}+e_n)$, $\pi(e_2+\ldots+e_{n-1}+e_n)$, \dots, $\pi(e_{n-1}+e_n)$, $\pi(e_n$).
The $i^{\textrm{th}}$ and $(n+i)^{\textrm{th}}$ edges are parallel and of the same length, namely $a_i$. 
A polygon whose edges have this property is called a \emph{zonogon}; see~\cite[Chapter 7]{Zieg} for background.

We define a \emph{tiling} to be a two dimensional sub-complex $T\subset C$ such that $\pi: T \to P$ is a homeomorphism.
These are the objects on which the initial conditions of our multidimensional cube recurrence can live.
A tiling is completely characterized by its 2-dimensional projection, which justifies our choice of terminology:

\begin{lemma} \label{lDm}\dontshow{lDm}
The map $\pi:C\to P$ induces a bijection between tilings $T\subset C$ and 
decompositions $\cT$ of $P$ into rhombi with side length $1$. 
\end{lemma}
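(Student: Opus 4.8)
The plan is to exhibit mutually inverse maps $T \mapsto \cT(T)$ and $\cT \mapsto T(\cT)$, both induced by $\pi$. The first is immediate: each $2$-cell of a tiling $T \subset C$ is a $2$-face of a unit cube of $C$, hence a translate of the square spanned by $e_i,e_j$ for some $i<j$, and $\pi$ carries it to a translate of the parallelogram on $v_i,v_j$; this is a genuine rhombus of side $1$ because $\theta_i\ne\theta_j$ both lie in $(0,\pi)$, so $v_i\ne\pm v_j$. Since $\pi|_T$ is a homeomorphism, the $\pi$-images of the $2$-cells of $T$ cover $P$ and meet only along common edges, so they form a decomposition $\cT(T)$ of $P$ into unit rhombi.

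For the reverse direction I first need the combinatorial input that every edge of a decomposition $\cT$ of $P$ into unit rhombi is parallel to one of the $v_i$ -- equivalently, every rhombus of $\cT$ is a translate of the one spanned by some pair $v_i,v_j$. I would prove this by the standard analysis of \emph{zones}: an edge of direction $u$ lies in a maximal ribbon of rhombi glued successively along $u$-parallel edges; this ribbon is embedded and cannot close up, so it exits $\partial P$ at both ends along $u$-parallel boundary edges, forcing $u$ to be parallel to a side of $P$, i.e.\ to some $v_i$. The same analysis shows that for each $i$ there are exactly $a_i$ zones of direction $v_i$, pairwise disjoint, each running from one $v_i$-side of $\partial P$ to the other. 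I expect this to be the \emph{main obstacle}: no single part is deep, but making the ``embedded ribbon terminates at the boundary'' argument airtight takes care; alternatively it can be quoted from the theory of zonotopal tilings, cf.~\cite[Chapter 7]{Zieg}.

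Granting this, I construct the reverse map by lifting paths. The vertex $\pi(0)$ of $P$ is a vertex of every such $\cT$; declare its lift to be $0$, and for any vertex $p$ of $\cT$ pick an edge-path from $\pi(0)$ to $p$ in the $1$-skeleton of $\cT$, write its successive edge vectors as $\varepsilon_k v_{i_k}$ with $\varepsilon_k\in\{\pm1\}$, and put $\widetilde p:=\sum_k\varepsilon_k e_{i_k}\in\ZZ^n$. This is path-independent because $P$ is simply connected, so the cycle space of the $1$-skeleton is spanned by rhombus boundaries, and the boundary of the rhombus on $v_i,v_j$ lifts to $e_i+e_j-e_i-e_j=0$. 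One checks $\pi(\widetilde p)=p$, and, using the zone structure, that $\widetilde p\in\Pi$ -- the coordinate $\widetilde p_i$ is controlled by how the path meets the $a_i$ zones of direction $v_i$ and lands in $\{0,\dots,a_i\}$. Now set $T(\cT):=\bigcup_R F_R\subset C$, where for a rhombus $R$ on $v_i,v_j$ with lowest vertex $p$ we take the $2$-face $F_R:=\widetilde p+[0,1]e_i+[0,1]e_j$; two rhombi sharing an edge have matching lifts of that edge's endpoints, so the corresponding faces agree along it and $T(\cT)$ is a subcomplex of $C$. Since $\pi(F_R)=R$ we get $\pi(T(\cT))=P$, and $\pi|_{T(\cT)}$ is injective because two points with a common image lie over a single point of $P$, which lies in the relative interior of a unique cell of $\cT$, over which $\pi$ is clearly injective; a continuous bijection of compact Hausdorff spaces is a homeomorphism, so $T(\cT)$ is a tiling. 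Finally the two maps are mutually inverse: $\cT(T(\cT))=\cT$ by construction, and $T(\cT(T))=T$ because the vertex-lift read off from $T$ via $(\pi|_T)^{-1}$ satisfies the same path-lifting recursion as $\widetilde{\,\cdot\,}$ and agrees with it at $\pi(0)$ -- here using that $0$ is the only vertex of $C$ lying over $\pi(0)$, since all the $v_i$ lie in the open upper half-plane.
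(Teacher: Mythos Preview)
Your proposal is correct and follows essentially the same route as the paper: both construct the inverse map $\cT\mapsto T$ by lifting edge-paths from $\pi(0)$, define the lift of a vertex as the sum of $\pm e_i$'s along the path, and verify path-independence by observing that the cycle space is generated by rhombus boundaries, each of which lifts to zero.

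The one noteworthy difference is in the step you flag as the main obstacle, namely showing that every edge of $\cT$ is parallel to some $v_i$. You propose the zone/ribbon argument; the paper instead gives a two-line shortcut: fix a linear functional $\xi$ constant on the given edge $\alpha$, and among all edges of $\cT$ parallel to $\alpha$ choose one $\alpha'$ maximizing $\xi$; then $\alpha'$ can have no rhombus on its $\xi$-positive side, so $\alpha'\subset\partial P$, forcing $\alpha\parallel v_i$ for some $i$. This is quicker than developing zones, though your zone analysis buys you the extra information (exactly $a_i$ disjoint zones in direction $v_i$) that you then use to check $\widetilde p\in\Pi$. The paper, by contrast, simply asserts $\tilde I\in C$ without further comment, so your version is actually more complete on that point.
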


\begin{proof}
We describe how to reconstruct $T$ from $\cT$.
Given a decomposition $\cT$ of $P$, we first note that any edge $\alpha\in\cT$ must be parallel to one of the vectors $v_i$.
Indeed, given $\alpha\in\cT$, pick a non-zero linear functional $\xi:\RR^2\to \RR$ that is constant on $\alpha$. 
Among all the edges of $\cT$ that are parallel to $\alpha$, let $\alpha'$ be one that maximizes $\xi$. 
The edge $\alpha'$ must be in $\partial P$, hence parallel to some $v_i$.
It follows that $\alpha$ is parallel to $v_i$. Given an oriented edge $\alpha\in\cT$, let
\[
\ell(\alpha):=\begin{cases}
\,\,e_i&\text{if $\alpha$ is parallel to $\,v_i$}\\
-e_i&\text{if $\alpha$ is parallel to $-v_i$}.
\end{cases}
\]
If $I$ is a vertex of $\cT$, let $\gamma=(\alpha_1,\ldots,\alpha_s)$ be a path in $\cT$ from $(0,0)$ to $I$, and let
$\tilde I:=\sum \ell(\alpha_i)\in C$. The vertex $\tilde I$ is then a preimage of $I$ under $\pi$.

Moreover, $\tilde I$ is independent of $\gamma$. 
Indeed, if $\gamma'=(\alpha'_1,\ldots,\alpha'_r)$ is another path from $(0,0)$ to $I$, 
we can write the cycle $\gamma-\gamma'$ as the boundary of a 2-chain $c=\sum n_i R_i$.
We then have $\sum\ell(\alpha_i)=\sum\ell(\alpha'_i)$ since
\[
\textstyle \sum\ell(\alpha_i)-\sum\ell(\alpha'_i)=\ell\big(\gamma-\gamma'\big)=\ell\big(\partial(\sum n_i R_i)\big)=\sum n_i\,\ell(\partial R_i)=0,
\]
where we have extended $\ell$ by linearity.

The collection of all $\tilde I$'s form the vertices of $T$. 
For every edge $(I_1, I_2)$ of $\cT$, there is a unique edge of $C$ connecting $\tilde{I}_1$ and $\tilde{I}_2$. 
Similarly, for each rhombus $(I_1,I_2,I_3,I_4)$ of $\cT$, there is a unique 2-face of $C$ containing $(\tilde{I}_1, \tilde{I}_2, \tilde{I}_3, \tilde{I}_4)$. 
These edges and 2-faces form the desired sub-complex $T\subset C$.
It is easy to check that $T$ is the unique sub-complex of $C$ which projects homeomorphically onto $\cT$.
\end{proof}

Lemma~\ref{lDm} is essentially a special case of the equivalence between 
``weak zonotopal tilings'' and ``strong zonotopal tilings'' proven in section 3 of~\cite{RichZieg}.
In view of the above lemma, we will sometimes identify tilings with their projection under $\pi$.
Let $I \in \Pi$, let $j < k <\ell$ be three numbers between $1$ and $n$, and let us assume that the cube \dontshow{xc}
\begin{equation}\label{xc}
c=\big\{I+xe_j+ye_k+ze_\ell\,\big|\,x,y,z\in[0,1]\big\}
\end{equation}
is contained in $C$.
The three facets of $c$ containing $I+e_k$ are then called the \emph{bottom faces} of $c$ and 
those containing $I+e_j+e_\ell$ the \emph{top faces}. 
If a tiling $T$ contains the top faces of $c$ then the complex formed by deleting the top faces of 
$c$ and replacing them with the bottom faces is also a tiling (the same is true with the 
words ``top'' and ``bottom'' reversed).
We will say that two tilings $T$, $T'$ that differ only by the above modification are related by a \emph{flip}, and we shall write it $T\rightsquigarrow T'$.
As an example, we illustrate the eight tilings of an octagon and all their possible flips:
\begin{equation}\label{figoc}
\psfrag{h}{$\scriptstyle \leftrightsquigarrow$}
\psfrag{v}{\rotatebox{90}{$\scriptstyle \leftrightsquigarrow$}}
\psfrag{ne}{\rotatebox{10}{$\scriptstyle \leftrightsquigarrow$}}
\psfrag{se}{\rotatebox{-10}{$\scriptstyle \leftrightsquigarrow$}}
\begin{matrix}\epsfig{file=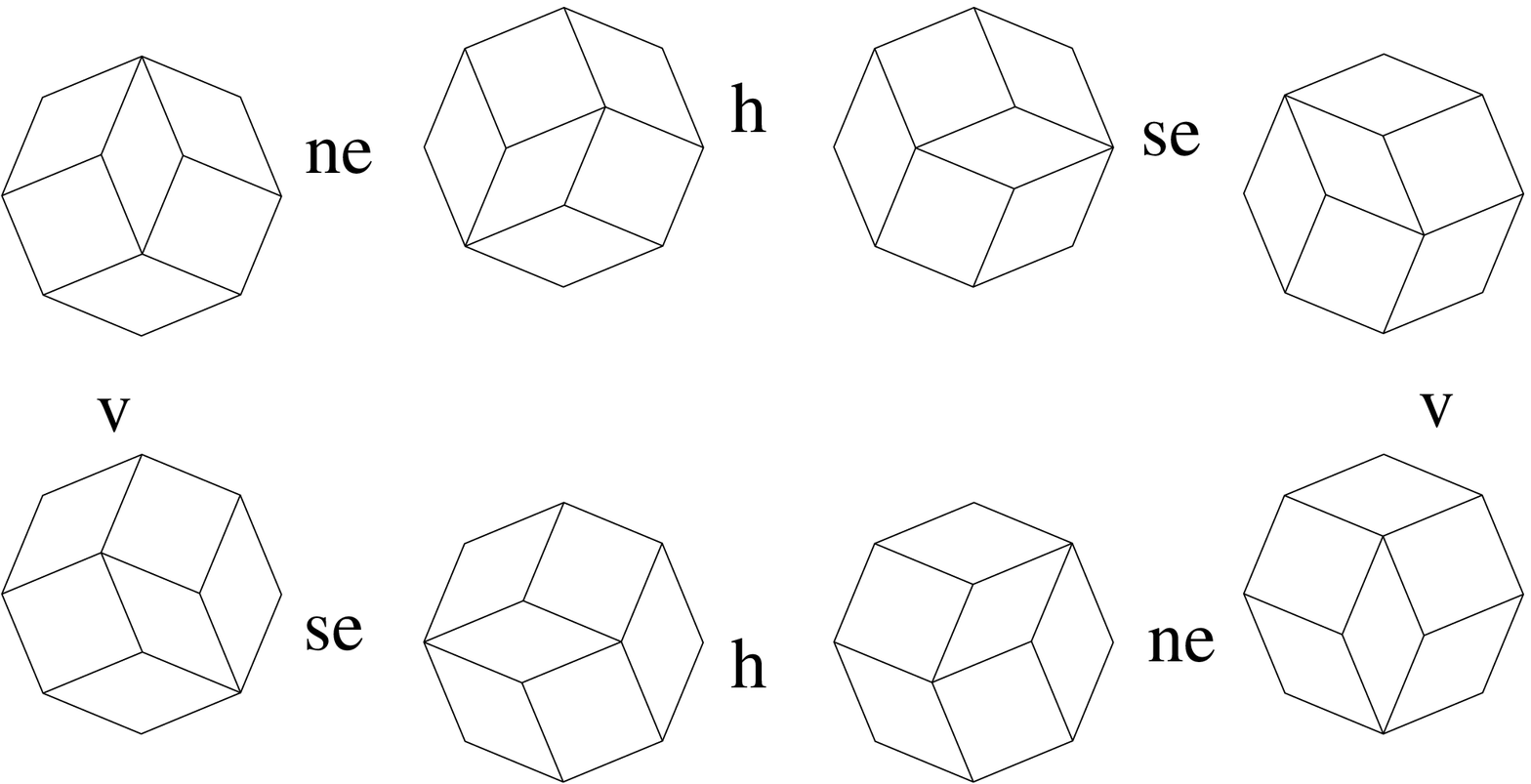, width=8cm}\end{matrix}
\end{equation}

If $T$ and $T'$ are related by a flip, the set of vertices of $T'$ are $(\vert(T)\setminus \{ J \})\cup \{ J' \}$, where $\{J,J'\}=\{I+e_k,I+e_j+e_\ell\}$.
So given a collection of variables $\{x_I \}_{I\in\vert(T)}$, 
we may solve equation (\ref{trc}) for $x(J')$ and thus find associated values for all the vertices of $T'$.
This is the elementary step of our recurrence.
\begin{gather*}
\psfrag{a}{$\scriptstyle a$}\psfrag{b}{$\scriptstyle b$}\psfrag{c}{$\scriptstyle c$}\psfrag{d}{$\scriptstyle d$}
\psfrag{e}{$\scriptstyle e$}\psfrag{f}{$\scriptstyle f$}\psfrag{g}{$\scriptstyle g$}\psfrag{h}{$\scriptstyle h$}
\psfrag{i}{$\scriptstyle i$}\psfrag{j}{$\scriptstyle j$}\psfrag{k}{$\scriptstyle k$}\psfrag{l}{$\scriptstyle l$}
\psfrag{m}{$\scriptstyle m$}\psfrag{n}{$\scriptstyle n$}\psfrag{o}{$\scriptstyle o$}\psfrag{p}{$\scriptstyle p$}
\psfrag{q}{$\scriptstyle q$}\psfrag{r}{$\scriptstyle r$}\psfrag{s}{$\scriptstyle s$}\psfrag{squig}{$\rightsquigarrow$}
\psfrag{gp}{$\scriptstyle \frac{fh+bm+cl}{g}$}
\begin{matrix}\epsfig{file=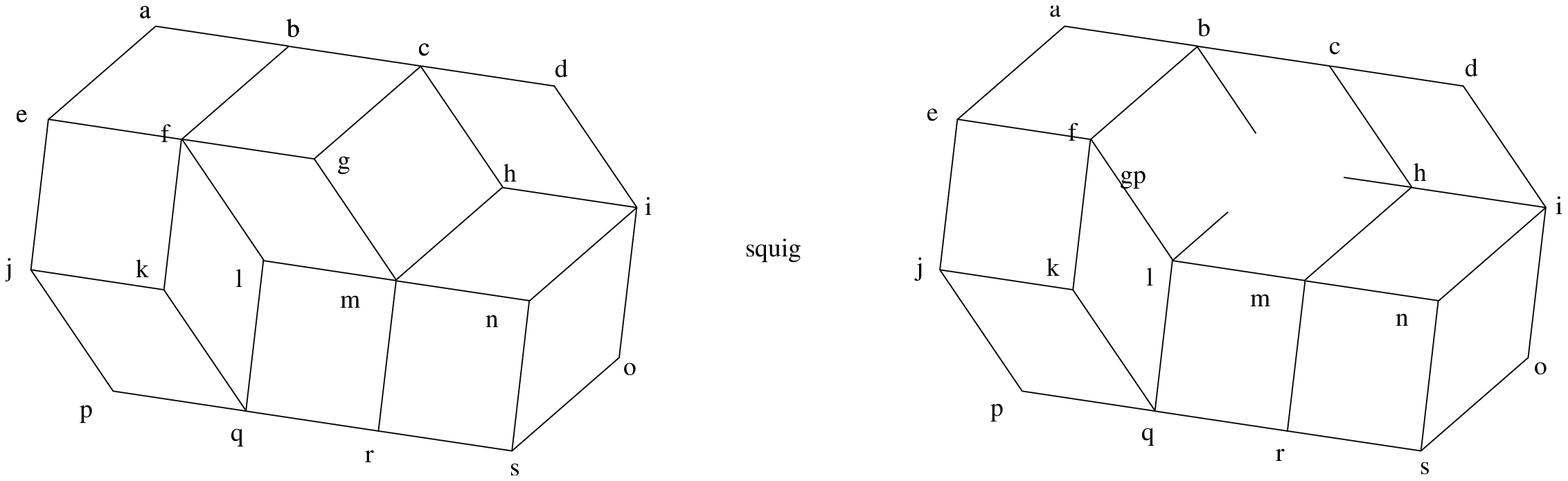, height=2.5cm}\end{matrix}\\
\nonumber\text{The step of the cube recurrence \!\big(\hspace{-.03cm}case $n=4$, $A=(1,1,1,3)$\big).}
\end{gather*}
In order to avoid divisions by zero, we shall assume that the $x_I $ are always {\em positive elements} of some ordered field $\FF$.
For example, $\FF$ might be $\QQ(\{x_I \}_{I\in \vert(T)})$, the field of rational functions on the variables $\{x_I\}_{I\in\vert(T)}$ for some given tiling $T$,
with an order on $\FF$ coming from an embedding $\FF\hookrightarrow\RR$ sending $x_I $ to positive real numbers.
If \dontshow{tqz}
\begin{equation}\label{tqz}
T=T_0\rightsquigarrow T_1\rightsquigarrow \ldots \rightsquigarrow T_k=T'
\end{equation}
are tilings related by a sequence of flips, and $x_I $ are associated to the vertices of $T$, we can then use (\ref{trc}) recursively to get values on the vertices of $T'$.
One of our main theorems asserts that this operation is well defined, \emph{i.e.}, it does not depend on the choice of intermediate tilings $T_1,\ldots, T_{k-1}$.

\begin{theorem}\label{tWt}\dontshow{tWt}
Let $T$, $T'$ be two tilings. Then there exists a sequence of intermediate tilings $T_i$ as in (\ref{tqz}), such that each two consecutive ones are related by a flip.

If $x_I \in\FF$ are positive elements attached to the vertices of $T$, then the values at the vertices of $T'$ obtained by the successive application of (\ref{trc}) 
do not depend on the choice of intermediate tilings $T_i$.
\end{theorem}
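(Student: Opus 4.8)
The plan is to split the statement into two parts: connectivity of the flip graph on tilings, and well-definedness of the propagated values. The first part — that any two tilings of a zonogon are connected by a sequence of flips — is a classical fact about rhombus tilings (equivalently, about reduced words or about lozenge tilings of zonogons), and I would either cite it or prove it by a height-function/dimension induction: pick a ``lowest'' tiling (the staircase tiling coming from the lexicographically-first resolution of the cube $C$), show that any other tiling admits a flip decreasing some natural nonnegative statistic (e.g. the number of cubes of $C$ lying ``below'' the tiling, in the sense of Lemma~\ref{lDm}), and conclude by induction that it can be flipped down to the lowest tiling; then reverse. So the real content is the second assertion, path-independence of the recurrence.

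For path-independence, the standard strategy is to reduce to ``local confluence'': by the connectivity just established, the space of flip-sequences between $T$ and $T'$ is generated, under homotopy, by elementary relations, so it suffices to check that whenever two flips can be applied in either order to a common tiling, the two orders give the same values, and that performing a flip and then its inverse is the identity. The inverse-flip case is immediate: solving (\ref{trc}) for $x_{J'}$ and then back for $x_J$ returns the original value, using positivity to stay in $\FF$. For two commuting flips there are two cases. If the two cubes $c$, $c'$ of (\ref{xc}) share no vertex, the two flips act on disjoint sets of variables and trivially commute. If they share vertices, one must enumerate the ways two cubes of $C$ can overlap so that a tiling contains the top faces of both and the flips are simultaneously or sequentially applicable; in each such configuration one checks, by direct manipulation of the cube relations (\ref{trc}) — essentially a Laurent-polynomial identity in the shared variables — that the two resulting assignments agree.

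The subtlety — and I expect this to be the main obstacle — is that the naive ``confluence of a rewriting system'' argument needs the flip graph not merely to be connected but to have its cycles generated by the local (square and inverse) relations; establishing this requires understanding the topology of the space of rhombus tilings of a zonogon, i.e. that the flip graph, together with its natural 2-cells, is simply connected. This is where the ``nontrivial combinatorial results about rhombus tilings'' advertised in the abstract enter. I would therefore organize the proof around a separate combinatorial theorem: the \emph{flip complex} of a zonogon (1-skeleton = tilings and flips, 2-cells attached along the local relations) is simply connected. Given that, the argument becomes: define, for each tiling $T$ reachable from the initial one, the tentative value vector; show it is unchanged along each 2-cell (the local computations above); invoke simple connectivity to conclude it depends only on the endpoint $T'$.

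One technical point to handle with care throughout: all divisions in (\ref{trc}) must be by nonzero elements, which is exactly why the hypothesis places the $x_I$ in the positive cone of an ordered field $\FF$; I would note that the cube relation, solved for any one of its eight variables in terms of the other seven, expresses a positive element as a ratio of sums of products of positive elements, hence stays positive — so the recursion never leaves the positive cone and is always defined. With connectivity, simple connectivity of the flip complex, and the local confluence checks in hand, Theorem~\ref{tWt} follows.
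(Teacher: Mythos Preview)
Your overall strategy is the paper's: connectivity of the flip graph (Proposition~\ref{cK}), simple connectivity of a $2$-complex $\XX_2$ built on it (Proposition~\ref{Key}), and a check that the recurrence is invariant around each $2$-cell. The one place you are imprecise, and where the actual content lies, is in naming the $2$-cells. When two applicable flips have overlapping rhombi they do \emph{not} form a $4$-cycle: after performing one flip, the hexagonal patch for the other has been destroyed, so ``doing them in either order'' is not even defined. The correct local relation in the overlapping case is an \emph{octagon}: the two flip-centers sit inside the projection of a $4$-face $\sigma$ of $C$, and the eight tilings of $\pi(\sigma)$ (figure~(\ref{figoc})) give an $8$-cycle in the flip graph. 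Thus the $2$-cells of $\XX_2$ are the squares from disjoint flips together with these octagons, and the nontrivial local verification is the explicit $8$-step computation that going once around an octagon returns the original values --- the paper writes this out term by term. With the $2$-cells correctly identified, your outline is exactly the paper's proof.
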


The proof of Theorem~\ref{tWt} requires a number of results on the combinatorics of tilings. 
These results, which we list below, will be proven in Section~\ref{fcj}. 
%
%
%
%

Let $\XX_1$ be the graph whose vertices are the tilings of $P$, and whose edges are the flips.

\begin{corollarycK}
The graph $\XX_1$ is connected. 
\end{corollarycK}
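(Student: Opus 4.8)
The plan is to fix one canonical tiling $\Tmin$ of $P$ and show that \emph{every} tiling can be carried to $\Tmin$ by a sequence of flips. Since the inverse of a flip is again a flip, this is exactly what is needed: given tilings $T,T'$, concatenating a flip path $T\rightsquigarrow\cdots\rightsquigarrow\Tmin$ with the reverse of a flip path $T'\rightsquigarrow\cdots\rightsquigarrow\Tmin$ joins $T$ and $T'$ in $\XX_1$. For $\Tmin$ I would take the ``sorted'' tiling: thinking of a tiling as a wiring diagram formed by its zones (for each direction $i$ there are $a_i$ mutually parallel zones, and zones of distinct directions $i<j$ meet in a single rhombus of type $\{i,j\}$), let $\Tmin$ be the one in which all direction-$1$ zones are swept past everything else, then all direction-$2$ zones, and so on; concretely it is a stack of parallelogram blocks arranged in lexicographic order of their direction pairs.

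The argument then runs by induction on $\sum_i a_i$. If $n\le 2$ the polygon $P$ is a single parallelogram with a unique unit-rhombus tiling and no flips, so $\XX_1$ is a point. For the inductive step the key statement is:

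\emph{Peeling Lemma.} Any tiling $T$ can be transformed by flips into a tiling $T_\flat$ in which the $a_n$ zones of the largest direction $n$ occupy a fixed canonical position, so that collapsing these zones leaves a tiling of the smaller zonogon $P' = P(a_1,\dots,a_{n-1})$.

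Granting this, note that a flip performed inside $P'$ lifts verbatim to a flip of the corresponding $T_\flat$ (its hexagon involves none of the direction-$n$ rhombi, which all lie in the frozen canonical strip). By the inductive hypothesis any two tilings of $P'$ are flip-connected, hence all tilings of $P$ of the form $T_\flat$ are flip-connected; combined with the Peeling Lemma, every tiling of $P$ is flip-connected to one of these, so $\XX_1$ is connected.

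To prove the Peeling Lemma I would again use a monovariant. Attach to a tiling the integer $\mu(T)=\sum_{Z}\#\{\text{rhombi of }T\text{ lying on the ``wrong'' side of }Z\}$, the sum over the direction-$n$ zones $Z$, so that $\mu=0$ exactly when the direction-$n$ zones are in canonical position. If $\mu(T)>0$, choose a rhombus $R$ of type $\{j,k\}$ with $j,k<n$ that is \emph{innermost} among those obstructing some direction-$n$ zone $Z$, i.e. $R$ is adjacent to $Z$ with nothing between them. Using that $Z$ crosses the direction-$j$ and direction-$k$ zones through $R$ exactly once, an examination of the two rhombi of $Z$ flanking $R$ should show that $R$ together with these two rhombi forms a flippable hexagon of types $\{j,k\},\{j,n\},\{k,n\}$, and that flipping it strictly decreases $\mu$; iterating drives $\mu$ to $0$. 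The main obstacle is precisely this last point: one must rule out that a non-canonical tiling is ``locally jammed'' with no hexagon available to flip $R$ past $Z$. Dispatching it requires the planar/combinatorial structure of rhombus tilings -- that zones are simple arcs and that two zones of distinct directions cross exactly once -- i.e. exactly the sort of facts the paper defers to Section~\ref{fcj}; so in a clean write-up I would establish those facts first and then feed them into the innermost-obstruction step. As a sanity check (and an alternative proof when all $a_i=1$), one can instead identify tilings of the $2n$-gon with commutation classes of reduced words for the longest element of $S_n$ and flips with braid moves, and invoke the Tits--Matsumoto theorem that all reduced words are braid-connected; the general $a_i$ case then follows either by the zone-peeling induction above or by a parallel Coxeter-theoretic argument.
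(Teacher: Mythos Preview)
Your outline is viable and would yield a correct proof, but it is a genuinely different route from the paper's.  The paper does not induct on $\sum a_i$ or peel off zones.  Instead it introduces a global height $\phi(T)=\sum_{I\in\vert(T)}\sum_j i_j$ and the \emph{fundamental forest} $\Gamma(T)$, consisting of those internal edges $(I,I+e_j)$ that are the unique upward edge at $I$.  The leaves of $\Gamma$ are shown to be exactly the vertices at which a downward flip is available (Corollary~\ref{sov}); distinct tilings have distinct forests (Corollary~\ref{to7}); and every downward flip decreases $\phi$ by $1$ (Lemma~\ref{ncb}).  It follows that there is a unique tiling $\Tmin$ with $\Gamma=\emptyset$, and that any maximal chain of downward flips from any $T$ terminates there (Corollary~\ref{cht}).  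Connectedness of $\XX_1$ is then a one-line consequence.

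Your zone-peeling argument has the virtue of linking directly to the pseudoline and reduced-word pictures you mention at the end, and it is closer to the classical proofs (Kenyon, Ringel) that the paper cites.  The paper's approach, by contrast, singles out no direction and --- more to the point --- produces the fundamental forest as a byproduct, which is then the main engine behind the harder Propositions~\ref{Key} and~\ref{Qus}.  The step you correctly flag as the crux of your argument (finding a flippable hexagon when $\mu>0$) can be made rigorous, but note that ``innermost obstructing rhombus adjacent to $Z$'' is not quite the right formulation: a rhombus $R$ of type $\{j,k\}$ shares only one edge with $Z$, not two, so you do not immediately get a hexagon from $R$ and two rhombi of $Z$.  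The standard fix is to look instead at a local extremum of the zone $Z$ inside the obstructed region and argue that the three rhombi meeting there form a flippable hexagon; this is where the ``zones are simple arcs crossing once'' facts you allude to actually enter.
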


We now construct a two dimensional cellular complex $\XX_2$ 
by gluing the following two-cells onto $\XX_1$:
 
{\it i)}
If $T\rightsquigarrow T'$ and $T\rightsquigarrow T''$ are flips involving disjoint sets of rhombi, 
we can perform the two of them simultaneously to get a fourth tiling $T'''$.
The vertices $T,T',T'',T'''$ form a 4-cycle in $\XX_1$ on which we attach a square.

{\it ii)} 
Suppose that $\sigma$ is a four-cell of $C$, and that $T$ is a tiling which contains a tiling of $\pi(\sigma)$.
Then $T$ contains one of the figures (\ref{figoc}) as a subset, and we may perform the corresponding cycle of eight flips. 
In each such case, we glue an octagon with boundary this series of eight flips.

\begin{PropositionKey}
The cell complex $\XX_2$ is simply connected.
\end{PropositionKey}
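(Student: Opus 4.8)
The plan is to show that every closed walk of flips can be filled by the type-(i) squares and type-(ii) octagons. Since $\XX_1$ is a graph, its fundamental group is free, and $\XX_2$ is obtained from it by attaching those two families of $2$-cells; so $\XX_2$ is simply connected exactly when every closed edge-walk of $\XX_1$ can be contracted to a point through the squares and octagons --- equivalently, when the $2$-cell boundaries normally generate $\pi_1(\XX_1)$. Fixing a basepoint tiling $T_0$ (legitimate since $\XX_1$ is connected, by Proposition~\ref{cK}), it suffices to contract every loop of flips based at $T_0$.

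I would do this by a discrete-Morse / local-confluence scheme. Using the combinatorial model of Lemma~\ref{lDm}, one first wants a \emph{monovariant}: an integer-valued function $h$ on tilings --- for instance a suitably weighted count of the $3$-cubes $c\subset C$ whose top faces lie in the tiling --- attaining its minimum only at a single tiling $T_{\min}$, and such that every $T\neq T_{\min}$ admits some flip strictly decreasing $h$; call these flips \emph{descents}. By a standard ``diamond lemma for simple connectivity'' (provable by induction on $h$), $\XX_2$ is then simply connected once one checks the following local statement at every tiling $T$: if $T\rightsquigarrow T_1$ and $T\rightsquigarrow T_2$ are two distinct descents, then $T_1$ and $T_2$ can be joined to a common tiling $T'$ with $h(T')<h(T)$ by paths remaining in $\{h\le h(T)\}$, in such a way that the resulting closed walk $T\rightsquigarrow T_1\rightsquigarrow\cdots\rightsquigarrow T'\rightsquigarrow\cdots\rightsquigarrow T_2\rightsquigarrow T$ bounds a disk built from squares and octagons.

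To verify this, analyse how the two descents interact. Each is supported on the image of a $3$-cube of $C$, i.e.\ on a unit hexagon cut into three rhombi, so the two descents jointly involve at most four of the edge directions. If the two hexagons share no rhombus, the descents commute (this is exactly the definition of a type-(i) cell); performing both yields a tiling $T'''$ with $h(T''')<h(T)$, and $T,T_1,T''',T_2$ bounds a type-(i) square. If instead the two hexagons share a rhombus, they cannot involve only three edge directions: then the two $3$-cubes would be stacked in the fourth coordinate of that shared $2$-face, and one checks that exactly one of the two flips is a descent --- a contradiction. So a pair of overlapping descents involves exactly four edge directions, and the key geometric point is then that the two $3$-cubes are facets of a single $4$-cube $\sigma\subset C$: the restriction of $T$ to $\pi(\sigma)$ is one of the eight octagon tilings of~(\ref{figoc}), the two descents are two of that octagon's eight flips, the rest of $T$ is frozen, and the loop $T\rightsquigarrow T_1\rightsquigarrow\cdots\rightsquigarrow T_2\rightsquigarrow T$, which runs once around the octagon, is contracted by the corresponding type-(ii) cell. (When $n=3$ there are no $4$-cubes, hence no octagons, and $\XX_2$ is just the $2$-skeleton of the CAT(0) cube complex of the distributive lattice of lozenge tilings of the hexagon, which is already simply connected --- a reassuring check rather than the main difficulty.)

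The main obstacle is the last case: proving that two overlapping descents at a common tiling are always facets of one projected $4$-cube, and that the octagon cell really contracts the associated loop. This needs a careful local analysis of rhombus tilings --- conveniently phrased through their de Bruijn zones --- classifying which arrangements of the at most four edge directions involved admit two distinct descending flips and ruling out everything but the $4$-cube pattern, with extra bookkeeping when some $a_i>1$ and several parallel zones share a direction. A second, more technical ingredient is the construction of the monovariant $h$ itself and the proof that $T_{\min}$ is its only local minimum, which is what guarantees the descending steps above; this is presumably the purpose of Lemma~\ref{gn}, Lemma~\ref{LA} and Proposition~\ref{Qus} of Section~\ref{fcj}.
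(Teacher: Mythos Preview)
Your overall strategy---induct on a height function and reduce to a local confluence check at the top of a loop---is exactly the paper's approach; the paper's monovariant is simply $\phi(T)=\sum_{I\in\vert(T)}\sum_j i_j$, which drops by $1$ under every downward flip. The disjoint case is handled by a type-(i) square, as you say.

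The gap is in the overlapping case. You assert that when two downward flips at $I$ and $I'$ share a rhombus, ``the restriction of $T$ to $\pi(\sigma)$ is one of the eight octagon tilings.'' It is true that the two $3$-cubes are facets of a $4$-cube $\sigma$, but it does \emph{not} follow that $T$ restricts to a tiling of the octagon $\pi(\sigma)$. The union of the two hexagons is only five rhombi, covering $\pi(\sigma)$ minus one rhombus---the one below the common lower vertex $M$ of $I$ and $I'$. That sixth rhombus lies in $T$ if and only if $M$ has exactly two downward edges; if $M$ has three or more, the region below $M$ is subdivided further and no octagon $2$-cell applies. Nothing in your local analysis rules this out, and in general (already for $n=5$, $A=(1,\dots,1)$) it can happen.

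The paper does not prove confluence for \emph{arbitrary} pairs of descents. Instead it fixes, for every tiling, a \emph{canonical} downward flip: the one performed at a leaf of the fundamental forest $\Gamma$ with smallest $y$-coordinate (Corollary~\ref{sov}). It then compares the arbitrary flip coming from the loop to this canonical one. When they overlap, the minimality of $I'$ is exactly what forces $M$ to have only two downward edges: otherwise, by Lemma~\ref{ars}, $M$ would have an edge of $\Gamma$ pointing down, leading to a leaf of $\Gamma$ strictly below $I'$. This is the missing idea in your sketch.

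Finally, Lemmas~\ref{LA}, \ref{gn} and Proposition~\ref{Qus} play no role here; the relevant combinatorial input is the fundamental forest (Lemma~\ref{ars} and Corollary~\ref{sov}), together with Corollary~\ref{cht} guaranteeing that repeated canonical descents terminate at $T_{\min}$.
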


\begin{proof}[Proof of Theorem~\ref{tWt}]
The fact that $T$ and $T'$ can be joined by a sequence of flips is the content of Proposition~\ref{cK}.
Given two paths $\gamma=\{T_i\}$, $\gamma'=\{T'_i\}$ in $\XX_1$ between $T$ and $T'$, 
we want to show that the values at $\vert(T')$ computed using $\gamma$ agree with the values computed using $\gamma'$. 
Since the recursion~(\ref{trc}) is invertible, it is
equivalent to show that the values at $\vert(T)$ computed by following the loop $\gamma\gamma'^{-1}$ agree with the original values.
In other words, we want to show that $\pi_1(\XX_1,T)$ acts trivially on the set of possible values of $\vert(T)$.
By Proposition~\ref{Key}, any element of $\pi_1(\XX_1,T)$ is a product of loops of the forms \dontshow{2lt}
\begin{equation*}
\psfrag{T}{$T$}
\begin{matrix}
\epsfig{file=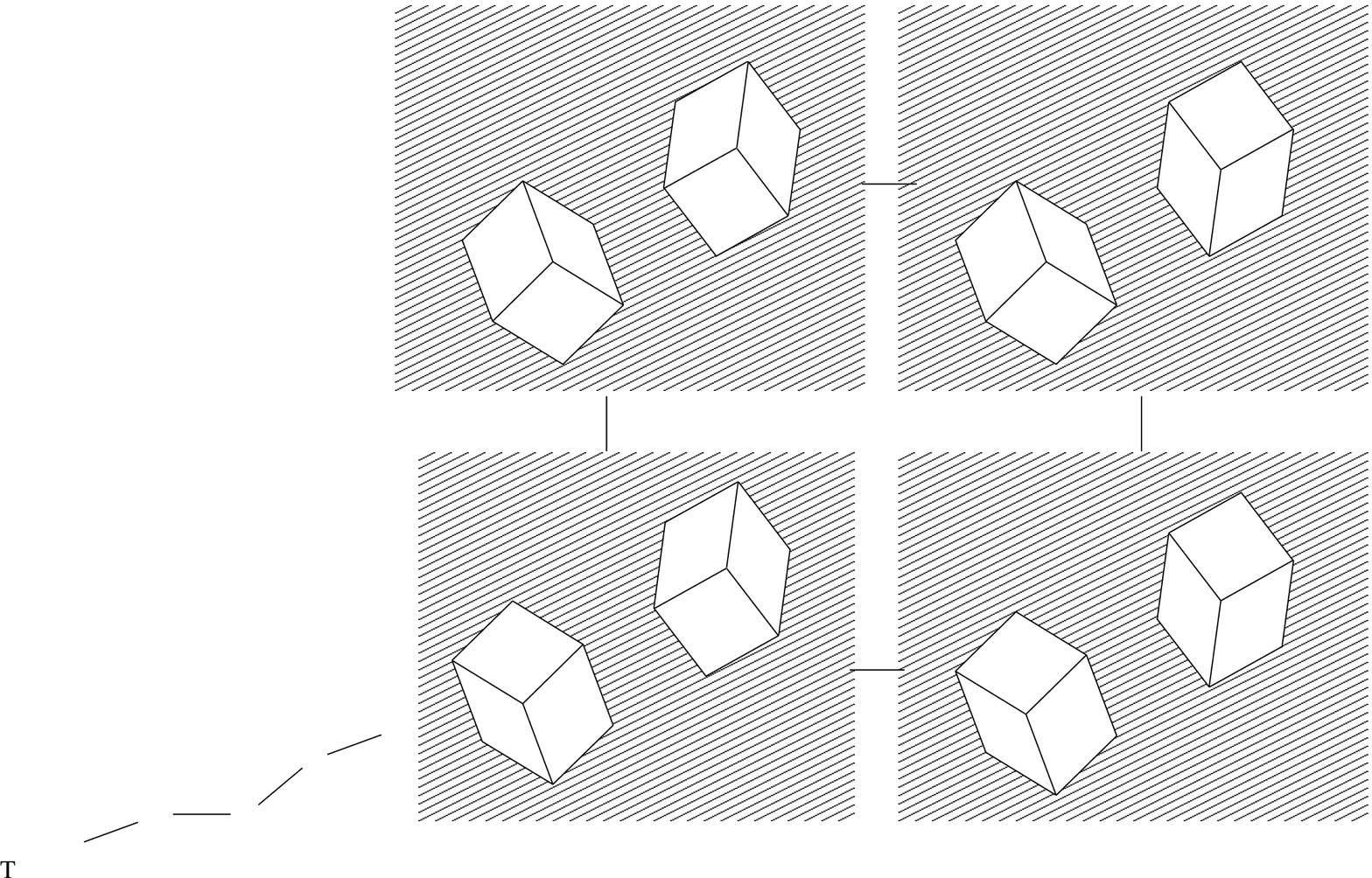, height=3cm}
\end{matrix}
\qquad\text{and}
\begin{matrix}
\epsfig{file=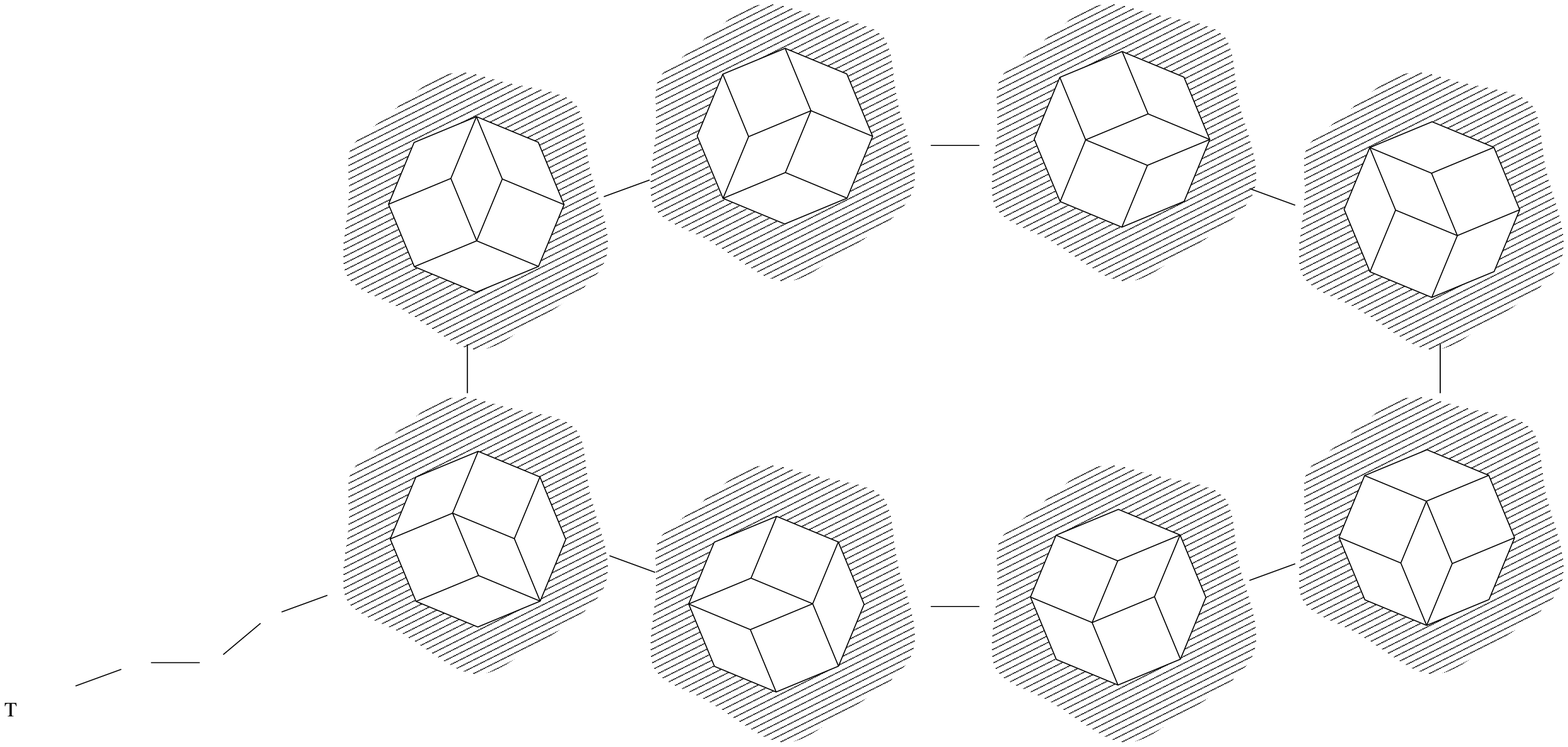, height=3.5cm}
\end{matrix}
\end{equation*}
The first loop clearly acts trivially.
The second one acts trivially by the following elementary computation:
{
\psfrag{a}{$\scriptstyle a$}\psfrag{b}{$\scriptstyle b$}\psfrag{c}{$\scriptstyle c$}\psfrag{d}{$\scriptstyle d$}
\psfrag{e}{$\scriptstyle e$}\psfrag{f}{$\scriptstyle f$}\psfrag{g}{$\scriptstyle g$}\psfrag{h}{$\scriptstyle h$}
\psfrag{i}{$\scriptstyle i$}\psfrag{j}{$\scriptstyle j$}\psfrag{k}{$\scriptstyle k$}\psfrag{l}{$\scriptstyle l$}
\psfrag{m}{$\scriptstyle m$}\psfrag{n}{$\scriptstyle n$}\psfrag{o}{$\scriptstyle o$}\psfrag{p}{$\scriptstyle p$}
\psfrag{q}{$\scriptstyle q$}\psfrag{r}{$\scriptstyle r$}\psfrag{s}{$\scriptstyle s$}
\begin{flushleft} $\begin{aligned}\epsfig{file=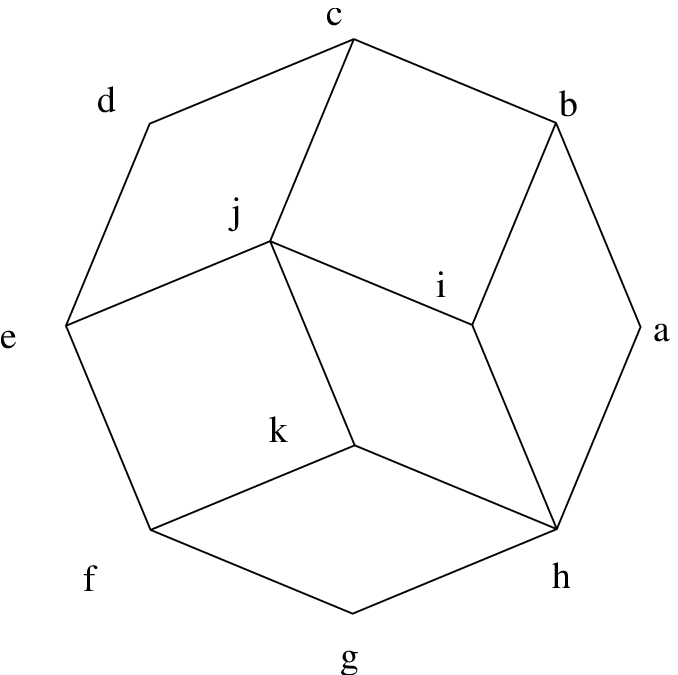, height=1.8cm}\end{aligned}\quad
 \rightsquigarrow\quad\begin{aligned}\epsfig{file=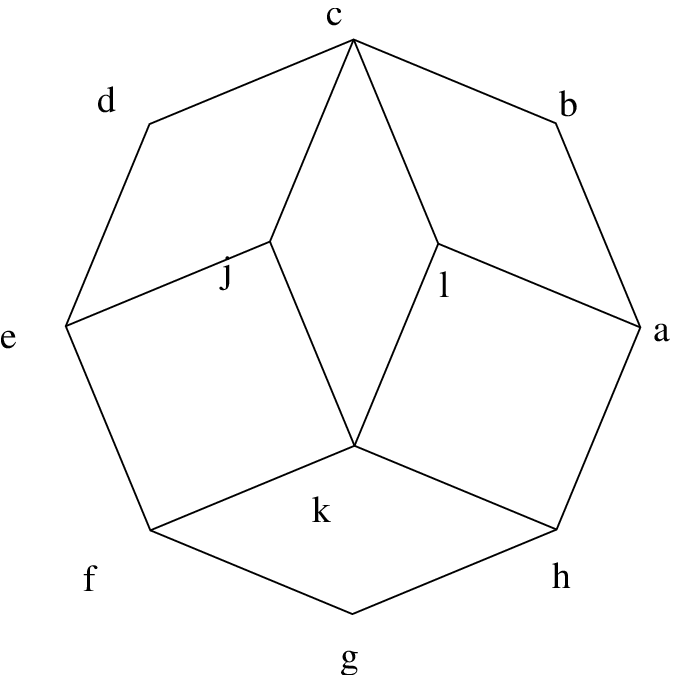, height=1.8cm}\end{aligned}\quad \textstyle l=\frac{aj+bk+ch}{i} $\end{flushleft}
\begin{flushleft} $\phantom{\begin{matrix}\epsfig{file=oc1.eps, height=1.8cm}\end{matrix}}\quad\rightsquigarrow\quad\begin{matrix}\epsfig{file=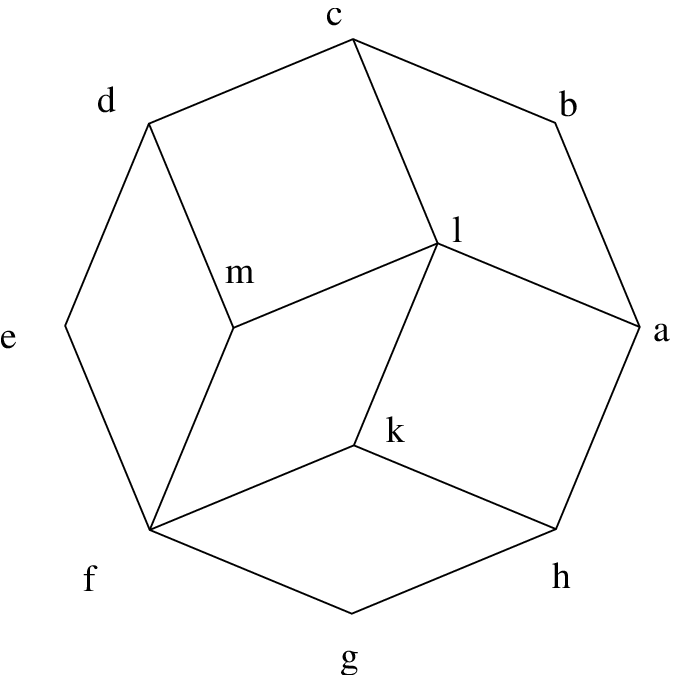, height=1.8cm}\end{matrix}\quad \textstyle m=\frac{cf+dk+el}{j}=\frac{cfi+dki+eaj+ebk+ech}{ij}$ \end{flushleft}
\begin{flushleft} $\phantom{\begin{matrix}\epsfig{file=oc1.eps, height=1.8cm}\end{matrix}}\quad\rightsquigarrow\quad\begin{matrix}\epsfig{file=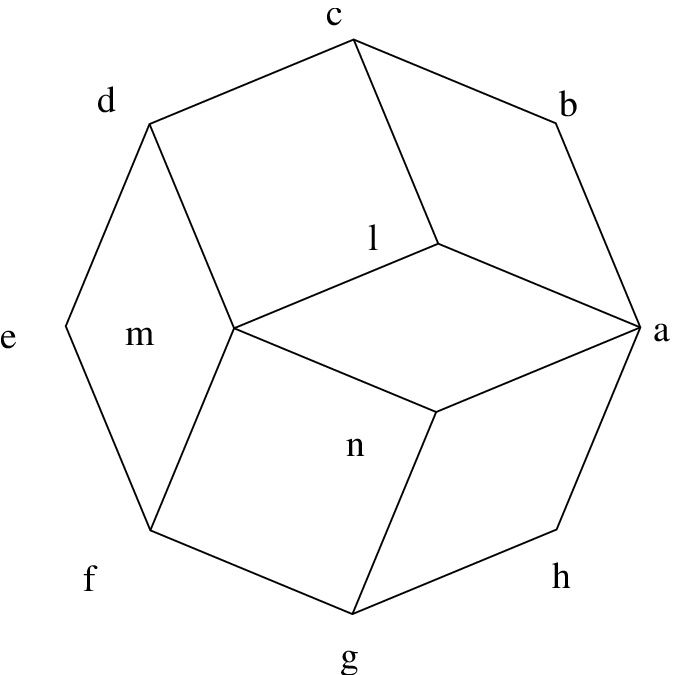, height=1.8cm}\end{matrix}\quad \textstyle n=\frac{fa+gl+hm}{k}=\frac{faij+gaj^2+gbkj+gchj+hcfi+hdki+heaj+hebk+ech^2}{ijk}$\end{flushleft}
\begin{flushleft} $\phantom{\begin{matrix}\epsfig{file=oc1.eps, height=1.8cm}\end{matrix}}\quad\rightsquigarrow\quad\begin{matrix}\epsfig{file=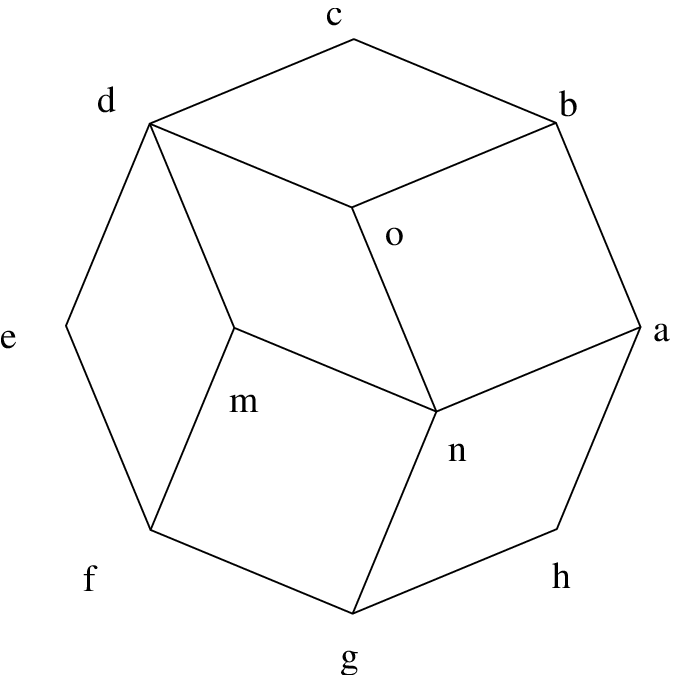, height=1.8cm}\end{matrix}\quad \textstyle o=\frac{ad+bm+cn}{l}=\frac{bek+dik+cgj+cfi+ceh}{jk}$\end{flushleft}
\begin{flushleft} $\phantom{\begin{matrix}\epsfig{file=oc1.eps, height=1.8cm}\end{matrix}}\quad\rightsquigarrow\quad\begin{matrix}\epsfig{file=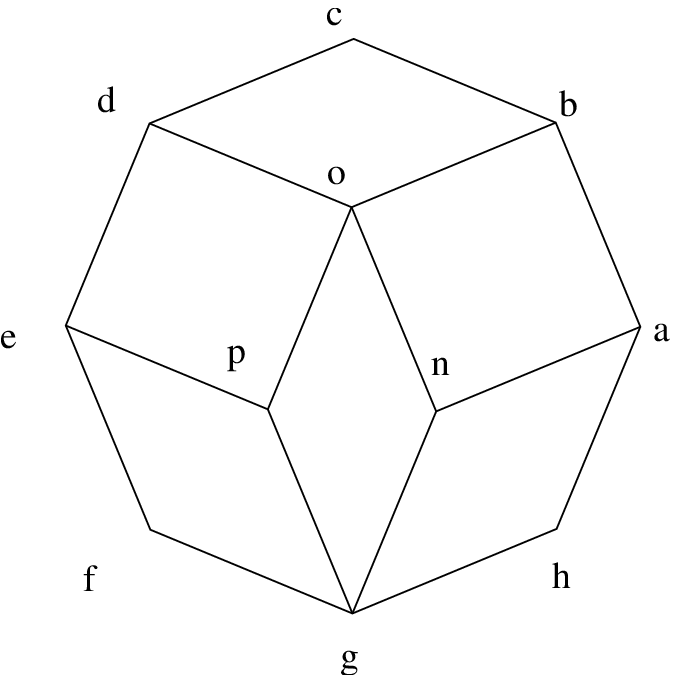, height=1.8cm}\end{matrix}\quad \textstyle p=\frac{dg+en+fo}{m}=\frac{eh+fi+gj}{k}\quad
\rightsquigarrow\quad\begin{matrix}\epsfig{file=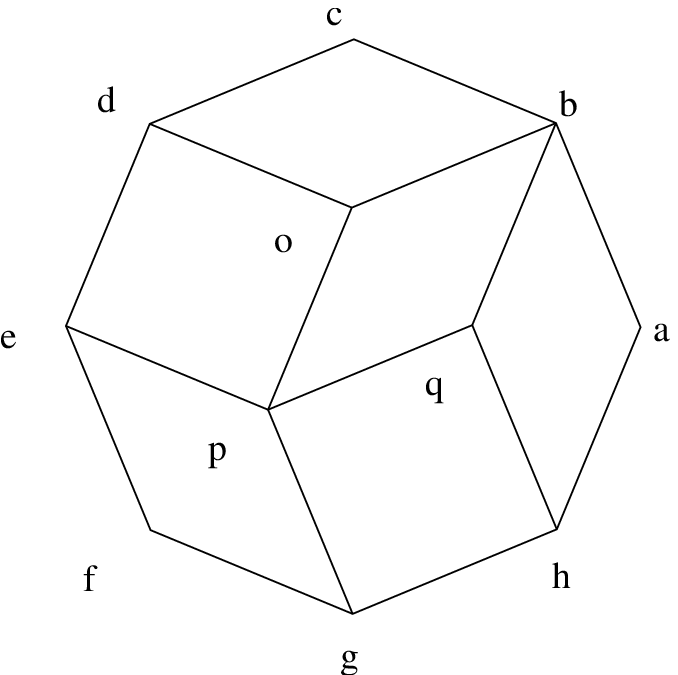, height=1.8cm}\end{matrix}\quad \textstyle q=\frac{ho+ap+bg}{n}=i$\end{flushleft}
\begin{flushleft} $\phantom{\begin{matrix}\epsfig{file=oc1.eps, height=1.8cm}\end{matrix}}\quad\rightsquigarrow\quad\begin{matrix}\epsfig{file=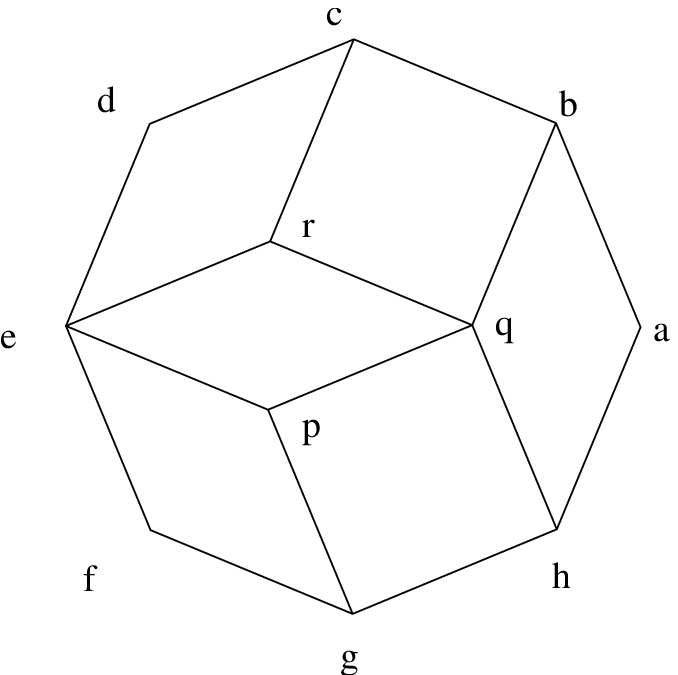, height=1.8cm}\end{matrix}\quad \textstyle r=\frac{be+cp+dq}{o}=j\quad
\rightsquigarrow\quad\begin{matrix}\epsfig{file=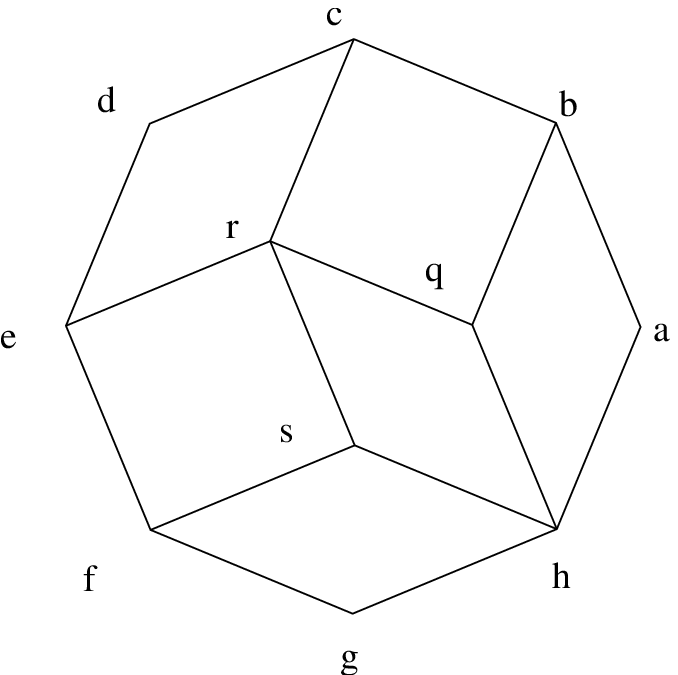, height=1.8cm}\end{matrix}\quad \textstyle s=\frac{eh+fq+gr}{p}=k$\hfill \raisebox{-1cm}{$\square$}\end{flushleft}
}
\end{proof}

Any element of $\Pi$ is contained in a tiling (see Lemma~\ref{LA} below). 
So we can use our recurrence to extend the values $\{x_I \}_{I\in\vert(T)}$ to all of $\Pi$.

\begin{theorem}\label{h:i}\dontshow{h:i}
Let $T$ be a tiling, and let $\{x_I \}_{I\in\vert(T)}$ be a collection of positive elements of $\FF$ attached to the vertices of $T$.
Then there is a unique way of extending $\{x_I \}$ to a labeling of the whole $\Pi$ by elements of $\FF$, such that all the equations (\ref{trc}) are satisfied.
\end{theorem}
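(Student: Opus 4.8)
My plan is to prove uniqueness and existence of the extension separately, using Theorem~\ref{tWt} to reduce matters to a couple of combinatorial facts about rhombus tilings of the kind established in Section~\ref{fcj}. For \emph{uniqueness}, the first point is that the equations~(\ref{trc}) are exactly the flip relations: if $T'\rightsquigarrow T''$ is the flip across the cube $c$ of~(\ref{xc}), then every label of $\vert(T'')$ except that of its one new vertex is inherited from $\vert(T')$, and the label of the new vertex is forced by the single instance of~(\ref{trc}) attached to $c$. Hence, if $\{x_I\}_{I\in\Pi}$ is any labeling obeying all of~(\ref{trc}) and agreeing with the prescribed data on $\vert(T)$, then its restriction to an arbitrary tiling $T'$ must equal what one obtains by propagating the data along a flip path $T\rightsquigarrow\dots\rightsquigarrow T'$ --- such a path exists by Proposition~\ref{cK} --- by induction on the length of that path. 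Since by Lemma~\ref{LA} every $I\in\Pi$ is a vertex of some tiling, this determines each $x_I$, so the extension is unique.

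For \emph{existence}, I would first let $\lambda_{T'}\colon\vert(T')\to\FF$ be the labeling of positive elements obtained by propagating $(T,\{x_I\}_{I\in\vert(T)})$ along any flip path to $T'$; this is well defined by Theorem~\ref{tWt}, and $\lambda_T$ is the original data. If $T'\rightsquigarrow T''$ is a flip, then $\lambda_{T''}$ arises from $\lambda_{T'}$ by a single recurrence step, so $\lambda_{T'}$ and $\lambda_{T''}$ agree on $\vert(T')\cap\vert(T'')$. To glue the $\lambda_{T'}$ into one function on $\Pi$, I need, for each $I$, that $\lambda_{T'}(I)$ is the same for all tilings $T'$ with $I\in\vert(T')$; this follows once one knows that any two such tilings are joined by a sequence of flips none of which involves $I$ (that is, flips across cubes whose two distinguished vertices both differ from $I$), since $\lambda_\bullet(I)$ is then constant along such a sequence. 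Establishing this ``restricted connectivity'' of the flip graph --- which is very much in the spirit of Proposition~\ref{cK} and Lemma~\ref{LA}, and which I expect to be provable by the methods of Section~\ref{fcj} --- is the step I anticipate being the main obstacle. Granting it, define $x_I:=\lambda_{T'}(I)$; when $I\in\vert(T)$ this returns the given value, since $\lambda_T$ is the given data.

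It then remains to verify each instance of~(\ref{trc}), namely the one attached to a cube $c$ as in~(\ref{xc}) contained in $C$. Here I would invoke one more tiling‑extension fact from Section~\ref{fcj} --- that the partial tiling consisting of the top faces of $c$ extends to a full tiling --- pick a tiling $T^*$ containing the top faces of $c$, and let $T^*\rightsquigarrow T^{**}$ be the flip across $c$. That flip expresses $\lambda_{T^{**}}$ in terms of $\lambda_{T^*}$ by precisely the instance of~(\ref{trc}) attached to $c$, and each of the eight variables occurring in it is a value of $\lambda_{T^*}$ or of $\lambda_{T^{**}}$, hence equals the corresponding $x_\bullet$ by construction; so~(\ref{trc}) holds for $c$. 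As $c$ was arbitrary and $\{x_I\}_{I\in\Pi}$ visibly restricts to the prescribed data on $\vert(T)$, this finishes existence, and with it Theorem~\ref{h:i}. In summary, Theorem~\ref{tWt} takes care of all the recurrence bookkeeping, and the genuinely new ingredients are the two combinatorial statements --- restricted connectivity of the flip graph and extendability of a cube's top‑face ``corner'' to a full tiling --- where I would expect to concentrate the effort.
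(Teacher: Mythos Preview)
Your proposal is correct and follows essentially the same route as the paper's proof: the two combinatorial facts you flag as the ``new ingredients'' are exactly Proposition~\ref{Qus} (your restricted connectivity of the flip graph through tilings containing $I$) and Lemma~\ref{gn} (your extendability of a cube's top faces to a full tiling), and the rest of your argument --- defining $x_I$ via $\lambda_{T'}$ for any $T'\ni I$, checking independence of $T'$ by Proposition~\ref{Qus}, and verifying each instance of~(\ref{trc}) via the flip across a tiling provided by Lemma~\ref{gn} --- matches the paper's proof step for step. Your explicit separation into uniqueness and existence is a bit more detailed than the paper's presentation, but the substance is identical.
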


Once again, we need a few results from Section~\ref{fcj} before we can prove the theorem:

\begin{lemmaLA}
For each vertex $I\in \Pi$, there exists a tiling $T$ containing $I$.
\end{lemmaLA}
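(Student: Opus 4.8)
The plan is to prove \textbf{Lemma~\ref{LA}} by an explicit construction: for a given vertex $I=(i_1,\dots,i_n)\in\Pi$, I will exhibit a concrete rhombus tiling $\cT$ of the zonogon $P$ whose associated tiling $T\subset C$ (via Lemma~\ref{lDm}) contains the lattice point $I$. The natural candidate is the following ``staircase'' tiling. Split the index set $\{1,\dots,n\}$ according to $I$ and build $T$ as a union of two cubes-in-a-corner: let $T$ pass through the monotone lattice path from $0$ to $I$ that first increases coordinate $1$ up to $i_1$, then coordinate $2$ up to $i_2$, and so on, and symmetrically the monotone path from $I$ to the top vertex $(a_1,\dots,a_n)$. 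More precisely, I would show that the box $B^- := \prod_i [0,i_i]$ and the box $B^+ := \prod_i[i_i,a_i]$ each admit a tiling (indeed any axis-parallel box $\prod[b_i,c_i]\subset C$ is itself a zonogon whose ``standard'' slab-by-slab tiling projects homeomorphically), and that these two tilings can be extended to a tiling of all of $P$; the common vertex $I$ then lies in $T$ by construction.

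The key steps, in order, are: (1) Reduce to showing that any sub-box $\prod_i[b_i,c_i]$ of $C$ has a tiling in the sense of Section~\ref{scz}; this is where I use that the box is again a zonogon (the translate of $C(c_1-b_1,\dots,c_n-b_n)$) and that it has an explicit tiling by ``height-one slabs'' — fix coordinate directions in order and lexicographically sweep, which is the multidimensional analogue of the standard rhombus tiling of a zonogon coming from a single permutation / generic lift. (2) Given the two boxes $B^-$ and $B^+$ meeting only at $I$, observe that $\pi(B^-)$ and $\pi(B^+)$ are sub-zonogons of $P$ sharing the single vertex $\pi(I)$, so that $P\setminus(\pi(B^-)\cup\pi(B^+))$ is a (possibly disconnected) union of smaller zonogonal regions, each of which can be tiled by induction on $n$ or by the same slab argument. (3) Glue: a tiling of $P$ is just a consistent choice of tilings of these pieces (consistency along shared boundary edges is automatic since the boundary of each piece is a concatenation of unit edges in the $v_i$ directions), and the resulting $\cT$ lifts uniquely to $T\subset C$ by Lemma~\ref{lDm}, with $I\in\vert(T)$ because $I$ is a vertex of the lift of $\pi(B^-)$.

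The main obstacle I anticipate is step (2)–(3): verifying that $P$ genuinely decomposes into sub-zonogons once we remove the two boxes, and that the leftover regions are themselves tileable. The cleanest route may be to avoid the geometry of the complement altogether and instead argue purely on the lattice/combinatorial side: produce a single monotone ``flag'' of subsets $\emptyset = S_0 \subset S_1 \subset \cdots \subset S_n = \{1,\dots,n\}$ — or rather a total order refining the partial data of $I$ — and take the tiling $T$ to consist of all unit cells $\prod[d_k,d_k+1]$ lying between $0$ and $(a_1,\dots,a_n)$ whose ``ceiling'' is dictated by sweeping the coordinates in that fixed order; one then checks directly that this complex projects homeomorphically onto $P$ (it is the lift of the standard tiling associated to a generic perturbation of the $v_i$) and that it contains $I$ provided the chosen order puts the ``already-saturated at $I$'' coordinates in the right place. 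If that global description proves fiddly, I would fall back on an induction on $n$: projecting out the last coordinate exhibits $C(a_1,\dots,a_n)$ as a stack of copies of $C(a_1,\dots,a_{n-1})$, tile the slab containing $I$ compatibly with the inductive tiling, and extend upward and downward by the same slab tiling, which is self-evidently homeomorphic onto its image. Either way the argument is elementary once the right explicit tiling is named; the only real work is bookkeeping the homeomorphism claim, and that is exactly the kind of statement Lemma~\ref{lDm} and the cited equivalence in~\cite{RichZieg} are designed to make routine.
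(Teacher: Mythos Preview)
Your box-decomposition plan (step (2)) runs into exactly the obstacle you flag: the complement $P\setminus(\pi(B^-)\cup\pi(B^+))$ is \emph{not} a union of zonogons. Already for $n=3$, $a_1=a_2=a_3=2$, $I=(1,1,1)$, the two leftover pieces are non-convex hexagons (tracing the boundary you meet the edge directions $v_1,v_2,v_3,-v_2,-v_1,-v_3$, with a reflex angle at the $-v_2,-v_1$ corner), so ``tile each zonogonal piece by induction'' does not apply. Your induction-on-$n$ fallback is salvageable: placing the inductive tiling $T'\ni I'$ in the translate $P(a_1,\dots,a_{n-1})+i_n v_n\subset P$ leaves two non-convex pieces, but each of these decomposes into the parallelograms $\big(\sum_{j<k}a_jv_j,\ \sum_{j\le k}a_jv_j,\ \sum_{j\le k}a_jv_j+i_nv_n,\ \sum_{j<k}a_jv_j+i_nv_n\big)$ for $k=1,\dots,n-1$ (and analogously above), which are trivially tileable; one then checks via Lemma~\ref{lDm} that the lift passes through $(I',i_n)=I$. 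So the route works, but it is more bookkeeping than your sketch suggests, and the ``monotone flag'' alternative as written is too vague --- a single global sweep order produces a fixed tiling like $\Tmin$, which does not contain an arbitrary $I$.

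The paper takes a completely different and much shorter route via the duality between rhombus tilings and line arrangements. Given $I=(i_1,\dots,i_n)$, for each direction $r$ one draws $a_r$ parallel lines perpendicular to $v_r$, choosing the offsets generically and so that exactly $i_r$ of them lie on one side of the origin and $a_r-i_r$ on the other. Genericity guarantees no triple points, so the planar dual of the arrangement is a rhombus tiling of $P$; the face of the arrangement containing the origin is then, by construction, the vertex $I$. This one-paragraph argument avoids any decomposition, gluing, or induction, and it is what makes the companion Lemma~\ref{gn} (existence of a tiling containing a prescribed $3$-cube) equally immediate --- something your approach would have to redo from scratch.
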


\begin{lemmagn}
Let $c$ be a 3-face of $C$. 
Then there exists a tiling $T\subset C$ that contains the three bottom faces of $c$.
Similarly, there exists a tiling containing the three top faces of $c$.
\end{lemmagn}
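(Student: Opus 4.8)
The plan is to establish the statement for the three \emph{bottom} faces; the statement for the three top faces then follows from the symmetry $x\mapsto(a_1-x_1,\dots,a_n-x_n)$ of $C$, which carries $\pi$ to the composite of $\pi$ with the antipodal involution of the (centrally symmetric) zonogon $P$, hence carries tilings to tilings, while interchanging the top and bottom faces of every $3$-face. So fix a $3$-face $c=\{I+xe_j+ye_k+ze_\ell:x,y,z\in[0,1]\}$, with $j<k<\ell$, contained in $C$, and let $F_1,F_2,F_3$ be its bottom faces, i.e.\ the facets of $c$ through $I+e_k$. As orientation (though used nowhere below): at the vertex $I+e_k$ these three faces contribute edges in the directions $v_j,\,v_\ell,\,-v_k$, and since $\theta_j<\theta_k<\theta_\ell$ the rhombi $\pi(F_1),\pi(F_2),\pi(F_3)$ subtend angles at $\pi(I+e_k)$ that sum to $2\pi$; so a tiling contains $F_1,F_2,F_3$ exactly when these are all of the rhombi incident to $I+e_k$ in it, and the lemma amounts to the statement that a prescribed vertex figure can be realized.

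I would prove this by induction on $\sum_i a_i$ (which is $\ge 3$, since $n\ge3$ and each $a_i\ge1$). In the base case $\sum_i a_i=3$ we have $n=3$ and $A=(1,1,1)$, so $C$ is the single cube $c$; then $F_1\cup F_2\cup F_3$, being the union of three facets of $c$ sharing the vertex $I+e_k$, projects homeomorphically onto the hexagon $P$, and is the required tiling. For the inductive step assume $\sum_i a_i>3$, so that either some $a_i\ge2$, or $A=(1,\dots,1)$ with $n\ge4$.

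If $a_i\ge2$ for some $i$, then the $x_i$-extent $[I_i,I_i+1]$ of $c$ misses at least one of the two extreme facets $\{x_i=0\}$, $\{x_i=a_i\}$ of $C$, so after translating $c$ by $-e_i$ if necessary (with $F_1,F_2,F_3$ moving along) we may assume $c$ lies in a subcomplex $C'\subset C$ that, as a cube complex, is attached to the zonogon $P(A')$ with $A'=A-e_i$. If instead $A=(1,\dots,1)$ and $n\ge4$, pick $m\notin\{j,k,\ell\}$; then $c$ (translated by $-e_m$ if $I_m=1$) lies in the facet $\{x_m=0\}$ of $C$, which is the cube complex of the zonogon $P(A')$, $A'$ being obtained from $A$ by deleting its $m$-th entry. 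In either case, by the inductive hypothesis there is a tiling $\tau'$ of $P(A')$ containing the relevant translate of $F_1,F_2,F_3$, and it remains to reinstate the missing direction --- call it $i$ --- extending $\tau'$ to a tiling of $P(A)$ that still contains $F_1,F_2,F_3$. For this I would use the standard operation of \emph{zone insertion}: one has $P(A)=P(A')\oplus[0,v_i]$, the cube complex of $P(A)$ contains two $v_i$-translated copies of the cube complex of $P(A')$, and one places $\tau'$ on whichever copy contains $c$ (so that the images of its rhombi include exactly $F_1,F_2,F_3$) and fills the rest of $P(A)$ by the single zone of type $i$ made of one rhombus, spanned by $\alpha$ and $v_i$, for each edge $\alpha$ of the boundary arc of $\tau'$'s region that faces away from $c$ in the $v_i$-direction. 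By Lemma~\ref{lDm} this rhombus decomposition of $P(A)$ lifts to a tiling of $C$, and it contains $F_1,F_2,F_3$; the induction is complete.

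The step I expect to be the real obstacle is the zone insertion itself: one must single out the correct $v_i$-facing boundary arc of $\tau'$'s region, verify that the collar of rhombi just described is a genuine rhombus tiling of the region $P(A)$ gains over $P(A')$, and check (via Lemma~\ref{lDm}) that it meets $\tau'$ exactly along that arc, so that the union projects homeomorphically onto $P(A)$. This is elementary and classical --- it is the statement that two rhombus tilings of zonogons differing by a single de Bruijn line are obtained from one another by inserting one zone, part of the circle of ideas around the weak/strong zonotopal tiling equivalence recalled just after Lemma~\ref{lDm} (see \cite{RichZieg} and \cite[Ch.~7]{Zieg}) --- and, if one prefers not to invoke it, it can be proved on its own by a short induction on the number of zones.
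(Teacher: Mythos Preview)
Your proof is correct and takes a genuinely different route from the paper's. The paper gives a direct, one-step construction: it reuses the line-arrangement technique from the proof of Lemma~\ref{LA}, choosing the defining constants $q_{r,m}$ generically but with the three particular constants $q_{j,i_j+1}$, $q_{k,i_k}$, $q_{\ell,i_\ell+1}$ taken very close to zero, so that the three corresponding lines form a small triangle around the origin. The planar dual then has, at the vertex dual to that triangle, exactly the three rhombi forming the bottom of $c$. This is very short because the hard work was already packaged in Lemma~\ref{LA}; the top-face case is then obtained either by the symmetric construction or by a single flip.

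Your induction on $\sum_i a_i$ with zone insertion is longer but has the virtue of being self-contained modulo Lemma~\ref{lDm}: it never invokes the line-arrangement/planar-dual machinery at all. The step you flag as the ``real obstacle''---verifying that inserting one zone of type $i$ along the appropriate boundary arc of the sub-zonogon yields a valid tiling of $P(A)$ extending $\tau'$---is indeed the crux and is standard. Your identification of which boundary arc to use is correct: it is precisely the arc of $\partial P(A')$ lying in the interior of $P(A)$, namely the arc on which the $i$-th coordinate is extremal in the direction of the added strip; the new rhombi are then the $2$-faces $(I,I+e_j,I+e_i,I+e_j+e_i)$ for each unit edge $(I,I+e_j)$ of that arc. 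Your reduction by the central symmetry $x\mapsto a-x$ to handle the top faces is also fine (and parallels the paper's remark).
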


\begin{PropositionQus}
Let $T$, $T'$ be two tilings of $P$, and let $I \in \Pi$ be a vertex contained in both $T$ and $T'$.
Then there is a sequence of flips going from $T$ to $T'$, so that all intermediate tilings contain $I$.
\end{PropositionQus}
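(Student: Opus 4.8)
*Let $T$, $T'$ be two tilings of $P$, and let $I \in \Pi$ be a vertex contained in both $T$ and $T'$. Then there is a sequence of flips going from $T$ to $T'$, so that all intermediate tilings contain $I$.*

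The plan is to deduce this from the unconstrained connectivity statement, Proposition~\ref{cK}, by induction on $\sum_i a_i$. Write $\XX_1^I\subseteq\XX_1$ for the full subgraph on those tilings that contain $I$. If $T\rightsquigarrow T'$ and both tilings contain $I$, then this flip --- which replaces some vertex $J\neq I$ by a vertex $J'\neq I$ --- is automatically an edge of $\XX_1^I$ and leaves unchanged whatever value has been assigned to $I$; so the assertion is exactly that $\XX_1^I$ is connected, and its role in the proof of Theorem~\ref{h:i} will be that the label at $I$ stays constant along any path in $\XX_1^I$. We may discard every direction with $a_k=0$ and assume all $a_k\geq1$; when $P$ has a unique tiling the claim is vacuous, which handles the base of the induction.

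For the inductive step, single out the direction $v_1$, and given a tiling $T\ni I$ let $Z$ be its extremal type-$1$ zone: the strip of type-$1$ rhombi abutting the edge $[\pi(0),\pi(a_1e_1)]$ of $\partial P$ if $i_1\geq1$, and the strip abutting the opposite edge if $i_1=0$. Removing $Z$ yields, by Lemma~\ref{lDm} applied to the evident sub-cube, a tiling $T^\flat$ of the zonogon $P^\flat$ gotten by decreasing $a_1$ by one, and $T^\flat$ contains the vertex $I^\flat$, where $I^\flat=I-e_1$ if $i_1\geq1$ and $I^\flat=I$ if $i_1=0$ --- a rule that depends on $I$ alone. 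Doing this to both $T$ and $T'$ and applying the induction hypothesis yields a flip path $T^\flat=T^\flat_0\rightsquigarrow\cdots\rightsquigarrow T^\flat_m={T'}^\flat$ every term of which contains $I^\flat$.

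The remaining, and main, task is to lift this path to an $I$-preserving flip path from $T$ to $T'$. A tiling of $P$ that contains $I$ and has extremal type-$1$ zone $Z$ is the same thing as a tiling of $P^\flat$ containing $I^\flat$ together with a choice of how the strip $Z$ is threaded through it; a flip of such a tiling either uses only rhombi outside $Z$, in which case it descends to a flip of the underlying tiling of $P^\flat$, or else it slides $Z$ past one rhombus. So to realize the descended flip $T^\flat_j\rightsquigarrow T^\flat_{j+1}$ upstairs, one slides $Z$ clear of the hexagon being flipped, performs the flip, and slides $Z$ back if convenient. The two points needing care --- and where I expect the genuine work to lie --- are: (a) that this can always be arranged without ever sliding $Z$ across the vertex $\pi(I)$ --- the one move that would destroy it --- which amounts to controlling how an extremal zone may be threaded past a prescribed vertex and is the real combinatorial content; and (b) making a coherent choice of the position of $Z$ all along the lifted path, equivalently first moving $T$ and $T'$ within $\XX_1^I$ so that their extremal type-$1$ zones are placed compatibly with the path downstairs. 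Point (b) can be bypassed by instead working directly in $\XX_1$: take a flip path from $T$ to $T'$ that minimizes the number of flips which create or destroy $I$, and show this number must be zero, by commuting a destruction past the subsequent creation with the help of the square and octagon relations --- whose contractibility is precisely Proposition~\ref{Key}.
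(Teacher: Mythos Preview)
What you have written is a plan, not a proof: you explicitly flag points (a) and (b) as ``where I expect the genuine work to lie'' and then do not carry out that work. Both of your suggested routes have real gaps. For the zone-sliding approach, the crux is exactly your point (a): you must show that the extremal type-$1$ zone can always be slid clear of the hexagon to be flipped \emph{without} the sequence of sliding-flips ever being performed at the vertex $I$. Nothing you have said explains why this is possible; the zone may well pass on either side of $I$ and need to be moved across it, and ruling this out is precisely the combinatorial content of the proposition. For the alternative ``minimize destructions of $I$'' approach, simple connectivity of $\XX_2$ (Proposition~\ref{Key}) tells you that any two paths from $T$ to $T'$ are homotopic through squares and octagons, but it does \emph{not} tell you that the destruction count can be decreased by such elementary homotopies. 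Concretely, in an octagon $2$-cell each of the interior vertices of $\Pi$ is destroyed and recreated exactly once around the $8$-cycle, so replacing two consecutive edges by the other six does not in general reduce the number of times $I$ is touched; you would need a careful case analysis to show that a destruction followed (eventually) by a recreation can always be locally simplified, and you have not provided one.

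The paper's proof takes a completely different route, with no induction on $\sum a_i$ and no appeal to Proposition~\ref{Key}. It first uses downward flips avoiding $I_0$ to reduce both $T$ and $T'$ to the set $\mathcal{T}_0$ of tilings whose only possible downward flip is at $I_0$. For any $S\in\mathcal{T}_0$ the fundamental forest is a single path from $I_0$ to a root $I'$, and an argument via Lemma~\ref{unT} shows that $I'$ depends only on $I_0$, so that elements of $\mathcal{T}_0$ are parametrized by the sequence of edge-directions along this path --- a fixed multiset permuted in some order. The heart of the proof is then an explicit construction showing that a single adjacent transposition of this sequence can be realized by a chain of $I_0$-preserving flips, built by following two adjacent pseudolines (chains of rhombi) from the forest out to their crossing. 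Since any two permutations are connected by transpositions, this finishes the argument. This approach avoids entirely the lifting and zone-threading difficulties you identify.
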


\begin{proof}[Proof of Theorem~\ref{h:i}]
Let $I\in\Pi$ be a vertex.
By Lemma~\ref{LA}, there exists a tiling $T'$ containing $I$.
By Theorem~\ref{tWt}, we can then use our recurrence to compute the values at $\vert(T')$,
and these values are independent of the way one goes from $T$ to $T'$.
To see that $x_I $ is independent of $T'$, consider another tiling $T''$ containing $I$.
By Proposition~\ref{Qus}, there exists a sequence of flips \dontshow{mpt}
\begin{equation}\label{mpt}
T'=T_0\rightsquigarrow T_1 \rightsquigarrow \ldots \rightsquigarrow T_s=T''
\end{equation}
so that each $T_i$ contains $I$.
Computing $x_I $ by first going from $T$ to $T'$, and then following (\ref{mpt}), we see that the two values of $x_I $ must agree.

It remains to see that $\{x_I \}_{I\in\Pi}$ satisfies the relations (\ref{trc}). 
Indeed by Lemma~\ref{gn}, any such relation is involved in at least one flip $S\rightsquigarrow S'$.
Computing the values at $\vert(S')$ by first going from $T$ to $S$ and then doing that flip, we see that (\ref{trc}) is satisfied.
\end{proof}


\section{Combinatorics of Zonogons\dontshow{fcj}}\label{fcj}

In this section, we will prove the combinatorial results about rhombus tilings which we used in the preceding section. 
Many of the results which are established in this section are not original to us, so we pause to summarize what was already known.


Rhombus tilings of zonogons have been studied in many contexts, and are known to be in bijection with many other objects.
We have already mentioned one of these bijections; the relation between rhombus tilings and sections of $\pi: C \to \RR^2$ (Lemma~\ref{lDm}).
As we described when stating Lemma~\ref{lDm}, this seems to have first been recorded by Richter-Gebert and Ziegler~\cite{RichZieg}.
Another bijection, which we do not use directly, is between rhombus tilings and certain oriented matroids.
This bijection is known as the Bohne-Dress theorem; it was announced by Dress, proved in the unpublished dissertation of Bohne and reproved in~\cite{RichZieg}.
By the standard bijection between oriented matroids and pseudoline arrangements (see~\cite{LF}), rhombus tilings are therefore in bijection with certain pseudo-line arrangements; we use this perspective occasionally (Lemmas~\ref{LA} and~\ref{gn})\footnote{When some of the $a_i$ are greater than $1$, Richter-Gebert and Ziegler take a slightly different approach than we do. They consider the same $2n$-gon we do, but they consider the projection from the cube $[0,1]^{\sum a_i}$ instead of $\prod [0,a_i]$. When discussing the relation between tilings and sections of $\pi$, this introduces purely cosmetic difficulties (Lemma~\ref{lDm} contains the correct statement). When discussing the situations of matroids and pseudo-line arrangements this becomes annoying; essentially, we want the $a_i$ parallel pseudo-lines to be numbered sequentially by the integers in $[1, a_i]$, while Richter-Gebert and Ziegler will permit us to number them arbitrarily.}.
Finally, tilings are in bijection with certain commutation classes of reduced words in the symmetric group $S_{\sum a_i}$; see~\cite{Elnit}.
These commutation classes, in the case $a_1=a_2=\ldots=a_n=1$, correspond to the elements of the higher Bruhat order $B(n,2)$; see~\cite{ZiegHB}. 

The main results of this section are Corollary~\ref{cht} (downward flips will eventually lead to $\Tmin$) with its corollary Proposition~\ref{cK} (the graph of flips is connected); 
Proposition~\ref{Key} (describing the fundamental group of the graph of flips); and Proposition~\ref{Qus} (a technical variant on Corollary~\ref{cht}). 
Proposition~\ref{cK} has been established many times. 
The first proof in the context of rhombus tilings may be due to Kenyon~\cite[Theorem 5]{Kenyon}; Ringel gave a proof in the context of pseudoline arrangements~\cite{Ringel}.
The more detailed Corollary~\ref{cht} has been proved in the context of higher Bruhat orders as the statement that $B(n,2)$ has a unique minimal element.
Proposition~\ref{Key} can be deduced from a result on oriented matroids, namely~\cite[Theorem~1.2]{SturmZieg}, and from a known result about reduced words in $S_n$, namely~\cite[Lemma 3.14]{SSV}.
In both cases, however, the translation to rhombus tilings is nontrivial, and would be of length comparable to our proof. 
As far as we can tell, Proposition~\ref{Qus} is completely new. 
Our use of the fundamental forest appears to be original, and thus our proofs of these results are new; we hope that the fundamental forest may be of use in the future study of rhombus tilings.

Given a cube $c$ as in (\ref{xc}) we have defined its three bottom faces, its three top faces, and the notion of flip between two tilings. 
Given a flip $T\rightsquigarrow T'$, there is a unique vertex $J\in T\setminus T'$, we then say that the flip is \emph{performed at} $J$.
Note that it is possible to perform a flip at $J$ if and only if $J$ is a trivalent vertex of $T$.
An {\em upward flip} will be the operation of replacing the bottom faces by the top faces, and a {\em downward flip} will be the opposite. \dontshow{figflip}

Edges out of a vertex $J$ come in two flavors: the ones of the form $(J,J+e_i)$, and the ones of the form $(J,J-e_i)$.
We say that the former are {\em pointing up} and that the latter are {\em pointing down}.
Note that this terminology is consistent with our way of projecting things onto $\RR^2$ 
since the $y$-coordinate of $\pi(J+e_i)$ (respectively $\pi(J-e_i)$) is always bigger (resp. smaller) than that of $\pi(J)$.
For example, the vertices at which upward flips can be performed have 2 edges pointing up and one pointing down.
Similarly, the vertices at which downward flips can be performed have 2 edges pointing down and one pointing up.

\begin{equation}\label{figflip}
\psfrag{up}{upward flip}
\psfrag{down}{downward flip}
\psfrag{t}{$\rightsquigarrow$}
\begin{matrix}\epsfig{file=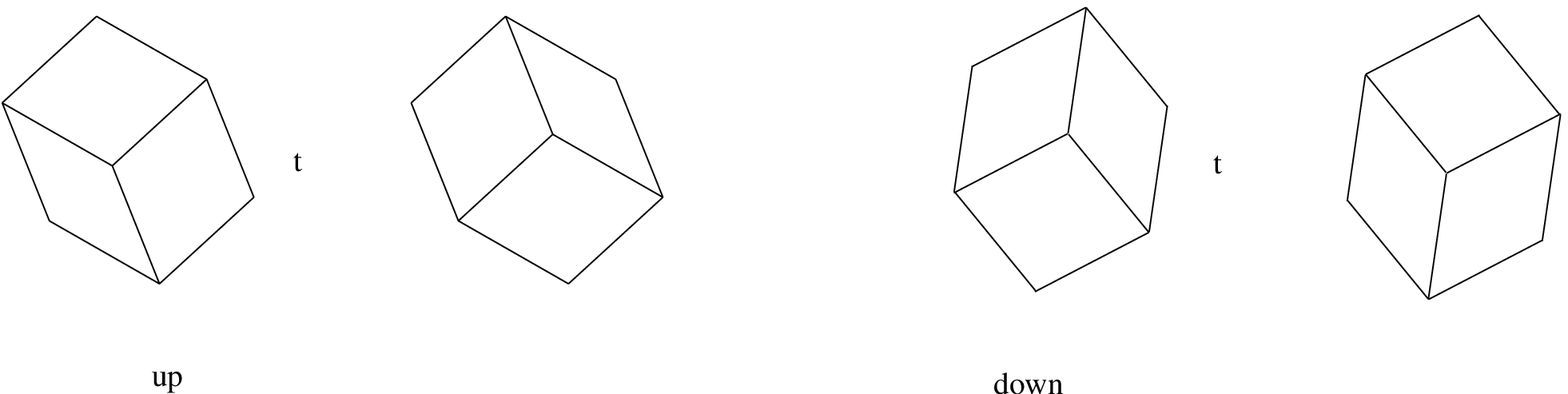, width=8cm}\end{matrix}
\end{equation}

We begin by showing that zonotopal tilings always exist in great numbers.

\begin{lemma}\label{LA}
For each vertex $I\in \Pi$, there exists a tiling $T$ containing $I$.
\end{lemma}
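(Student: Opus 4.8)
The plan is to construct an explicit rhombus decomposition $\cT$ of $P$ having $\pi(I)$ as a vertex, and then lift it via Lemma~\ref{lDm}. Write $a=(a_1,\dots,a_n)$. Given $I=(i_1,\dots,i_n)\in\Pi$, look at the two sub-boxes $\prod_k[0,i_k]$ and $\prod_k[i_k,a_k]$ of $C$: they meet only at $I$, and their projections $P^-:=\pi\big(\prod_k[0,i_k]\big)$ and $P^+:=\pi\big(\prod_k[i_k,a_k]\big)$ are sub-zonogons of $P$. Since each $v_k$ has strictly positive second coordinate, $P^-$ lies weakly below the horizontal line through $\pi(I)$ and $P^+$ weakly above it, so that $P^-\cap P^+=\{\pi(I)\}$. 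As $\pi(0)\in P^-$ and $\pi(a)\in P^+$ are, respectively, the lowest and highest points of $P$, the set $P^-\cup P^+$ runs from the bottom corner of $P$ to its top corner, and $P\setminus(P^-\cup P^+)$ falls into two regions -- a ``left lobe'' and a ``right lobe'' -- one on each side of $\pi(I)$.

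The key auxiliary fact is a tileability statement for such lobes. Call a finite $y$-monotone lattice path with unit edges among $v_1,\dots,v_n$ \emph{sorted} if its edges occur in weakly increasing order of $k$. A short exchange argument (swapping two consecutive edges $v_c,v_d$ with $c>d$ moves the path weakly to the right) shows that, among all such paths with prescribed endpoints and a prescribed multiset of edges, the sorted one $\rho_0$ lies weakly to the right of every other such path $\rho$. I claim the region between $\rho$ and $\rho_0$ is tileable by unit rhombi: if $\rho\neq\rho_0$ then $\rho$ contains consecutive edges $v_c,v_d$ with $c>d$; the unit rhombus having these as two of its sides is exactly the region between $\rho$ and the path $\rho'$ obtained by correcting this inversion, and $\rho'$ is still weakly left of $\rho_0$, so this rhombus lies in the region and removing it leaves the region between $\rho'$ and $\rho_0$, which has one fewer inversion and is finished by induction. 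Now the right lobe is precisely the region between the right boundary of $P^-\cup P^+$ (a $y$-monotone path) and the right boundary $\pi(0),\pi(a_1e_1),\pi(a_1e_1+a_2e_2),\dots,\pi(a)$ of $P$, which is the sorted path with the same multiset of edges; the left lobe is the mirror situation. The same statement, applied to a zonogon's left and right boundary paths, also shows that $P^-$ and $P^+$ are themselves tileable.

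With this in hand the proof assembles: fix any tiling of $P^-$ and any tiling of $P^+$, glue them at the point $\pi(I)$, fill in the two lobes, and glue everything along the shared boundary arcs; the result is a rhombus decomposition $\cT$ of $P$ with $\pi(I)$ among its vertices. By Lemma~\ref{lDm}, $\cT$ lifts to a tiling $T\subset C$, and, following the reconstruction in that lemma along a path from $\pi(0)$ to $\pi(I)$ lying inside $P^-$, the vertex of $T$ over $\pi(I)$ is exactly the lattice point $I$; hence $I\in\vert(T)$. The one step carrying real content is the lobe-tileability claim of the second paragraph -- equivalently, the assertion that a tiling of $P^-$ placed ``below'' together with a tiling of $P^+$ placed ``above'' always extends to a tiling of all of $P$. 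One cannot shortcut this by quoting ``every zonogon has a tiling,'' because the lobes are in general non-convex ($\pi(I)$ is typically a reflex corner of each); the rhombus-peeling induction above is what handles exactly this. This construction is, in substance, the pseudoline-arrangement argument alluded to earlier, phrased directly in the language of tilings.
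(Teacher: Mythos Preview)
Your proof is correct and takes a genuinely different route from the paper's. The paper constructs a generic line arrangement $\cL=\bigcup L_{r,m}$ with $L_{r,m}=\{x:\langle x,v_r\rangle=q_{r,m}\}$, choosing the constants so that exactly $i_r$ of the lines in direction $r$ lie ``below'' the origin; the planar dual of $\cL$ is then a tiling whose region containing the origin corresponds to $I$. This is short but leans on the duality between simple line arrangements and rhombus tilings, which the paper imports from the literature.

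Your argument is instead a direct, self-contained construction: split $C$ at $I$ into two sub-boxes, project to get $P^-$ and $P^+$, and fill in the two non-convex lobes by a bubble-sort rhombus-peeling. The exchange/inversion argument is exactly what is needed, and your observation that the lobes are non-convex (so one cannot simply quote ``zonogons are tileable'') is the right diagnosis of where the work lies. The final lifting step is cleanest if you note that the restriction of $\cT$ to $P^-$ is itself a rhombus decomposition of the smaller zonogon $P^-$, so by Lemma~\ref{lDm} it lifts into $\prod_k[0,i_k]\subset C$; hence the top vertex $\pi(I)$ of $P^-$ lifts to $I$.

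What each approach buys: the paper's line-arrangement proof is a two-sentence argument once the duality is in hand, and it generalizes immediately to Lemma~\ref{gn} (forcing a prescribed $3$-cell to appear) by pushing three of the $q_{r,m}$ close to zero. Your construction is more elementary and gives an explicit tiling one can draw, but extending it to Lemma~\ref{gn} would require a bit more work. Your closing remark that this is ``in substance'' the pseudoline argument is a stretch: the two proofs share the idea of separating coordinates at $I$, but the mechanisms are different, and in particular your tiling need not coincide with any tiling dual to a straight-line arrangement.
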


\begin{proof}
Write $(i_1,\ldots,i_n)$ for $I$. For each $r\in\{1,\ldots,n\}$, pick generic real numbers
$q_{r,1}<q_{r,2}<\ldots < q_{r,i_r}<0< q_{r,i_r+1}<\ldots< q_{r,a_r}$, where $a_r$ is as in Section~\ref{S11}, and define
\[
L_{r,m}:=\big\{x\in\RR^2\,\big|\,\langle x,v_r\rangle=q_{r,m}\big\}\,,\qquad \cL:=\bigcup L_{r,m}\,.
\]
Since the $q_{r,m}$ are generic, there are no triple intersections in $\cL$, 
and its planar dual is thus homeomorphic to a tiling $T$ of $P$ by rhombi (see~\cite{Zieg}).
A component of $\RR^2\setminus \cL$ corresponds to a vertex of $T$, which by Lemma~\ref{lDm} can be viewed as element of $\Pi$.
The $r^{\textrm{th}}$ coordinate of such a vertex is then given by the number of $L_{r,1},\ldots,L_{r,a_r}$ that pass ``below'' the component.

Now consider the component of $\RR^2\setminus \cL$ containing the origin.
By construction, the corresponding vertex of $T$ has coordinates $(i_1,\ldots,i_n)$.
Therefore $I\in T$.
\end{proof}

We next show that every three dimensional face of $C$ corresponds to an actual flip between some pair of tilings:

\begin{lemma}\label{gn} \dontshow{gn}
Let $c$ be a 3-face of $C$. 
Then there exists a tiling $T\subset C$ that contains the three bottom faces of $c$.
Similarly, there exists a tiling containing the three top faces of $c$.
\end{lemma}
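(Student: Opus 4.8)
The plan is to adapt the line-arrangement construction already used in the proof of Lemma~\ref{LA}, reusing Lemma~\ref{lDm} to pass from a generic line arrangement (via its planar dual) to an honest tiling $T\subset C$. Write $I=(i_1,\dots,i_n)$; since $c\subseteq C$ we have $i_r+1\le a_r$ for $r\in\{j,k,\ell\}$. As in Lemma~\ref{LA}, I pick for each $r$ generic reals $q_{r,1}<\dots<q_{r,a_r}$, set $L_{r,m}=\{x\in\RR^2:\langle x,v_r\rangle=q_{r,m}\}$ and $\cL=\bigcup L_{r,m}$, and recall that the $r$-th coordinate of the vertex of the dual tiling $T$ attached to a component $U$ of $\RR^2\setminus\cL$ is $\#\{m:L_{r,m}\text{ lies below }U\}$.

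For the bottom faces I choose the heights so that: (i) exactly $i_k+1$ lines of family $k$ lie below the origin, while for each $r\ne k$ exactly $i_r$ lines of family $r$ lie below the origin; and (ii) the three heights $q_{j,i_j+1},q_{k,i_k+1},q_{\ell,i_\ell+1}$ are much smaller in absolute value than every other $|q_{r,m}|$ and every gap between consecutive heights. Condition (i) forces $q_{k,i_k+1}<0<q_{j,i_j+1},q_{\ell,i_\ell+1}$, so the origin lies in the region ``below $L_{j,i_j+1}$, above $L_{k,i_k+1}$, below $L_{\ell,i_\ell+1}$'', whose three outward edge-normals are $v_j,-v_k,v_\ell$. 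Since $0<\theta_j<\theta_k<\theta_\ell<\pi$, the vectors $\{v_j,-v_k,v_\ell\}$ positively span $\RR^2$, so this region is bounded; by (ii) no other line of $\cL$ meets it, hence it is the small triangle cut out by $L_{j,i_j+1},L_{k,i_k+1},L_{\ell,i_\ell+1}$ alone. Thus the origin is dual to a trivalent vertex of $T$, and a coordinate count identifies it as $I+e_k$, its three neighbours as $I$, $I+e_j+e_k$, $I+e_k+e_\ell$, and the three rhombi around it (dual to the three pairwise crossings of the three lines) as the three $2$-faces of $c$ through $I+e_k$, i.e.\ the bottom faces of $c$. Hence $T$ contains the bottom faces of $c$.

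For the top faces the argument is identical, replacing condition (i) by requiring exactly $i_j+1$, $i_k$, $i_\ell+1$ lines below the origin in families $j,k,\ell$ respectively; then $q_{j,i_j+1},q_{\ell,i_\ell+1}<0<q_{k,i_k+1}$, the origin lies in the region with outward normals $-v_j,v_k,-v_\ell$ (again positively spanning $\RR^2$ by the ordering of the $\theta_r$), it is dual to the trivalent vertex $I+e_j+e_\ell$, and the surrounding rhombi are the three top faces of $c$.

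I expect the only delicate point to be realizing the correct combinatorial type: the two rhombus tilings of the local hexagon correspond to the two sign patterns $(+,-,+)$ and $(-,+,-)$ of the three bounding lines, and for a given arrangement exactly one of these is realized as the bounded triangle; the sign conditions on $q_{j,i_j+1},q_{k,i_k+1},q_{\ell,i_\ell+1}$ together with $\theta_j<\theta_k<\theta_\ell$ are precisely what guarantees we get the bottom (resp.\ top) one. The remaining ingredients — genericity producing an honest rhombus tiling, the scaling in (ii) keeping the triangle undisturbed, and the bookkeeping matching the seven local regions with the vertices of $\pi(c)$ — are routine.
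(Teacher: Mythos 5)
Your proposal is correct and is essentially the paper's own argument: both perturb the line arrangement of Lemma~\ref{LA} so that the three lines $L_{j,\cdot}$, $L_{k,\cdot}$, $L_{\ell,\cdot}$ adjacent to the origin have heights much smaller than all others, producing a small triangle around the origin whose dual vertex is the trivalent center of the three bottom (resp.\ top) faces. The only differences are cosmetic (you base the cube at $I$ so the triangle is the region $I+e_k$, while the paper rebases so it is the region $I$, and the paper obtains the top-face tiling alternatively by performing a flip); your extra checks on boundedness of the triangle and the local identification of the three rhombi are correct.
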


\begin{proof}
Let $\{I+\alpha e_j-\beta e_k + \gamma e_\ell\}$, $\alpha,\beta,\gamma \in\{0,1\}$, $j < k < \ell$, be the vertices of $c$, and let us write $(i_1,\ldots,i_n)$ for $I$.
To construct $T$, we proceed as in Lemma~\ref{LA},
but we make sure to pick $q_{j,i_j+1}$, $q_{k,i_k}$, and $q_{\ell,i_\ell+1}$ very close to zero, compared with the other $q_{r,m}$'s.
The arrangement $\cL$ then has a small triangle $\tau$ containing the origin, made out of the lines $L_{j,i_j+1}$, $L_{k,i_k}$, and $L_{\ell,i_\ell+1}$.
In particular, $\tau$ has six neighboring regions.
The vertex corresponding to $\tau$ is $I$, and the ones corresponding to the six neighboring regions are 
$I-e_k$, $I+e_j-e_k$, $I+e_\ell-e_k$, $I+e_j$, $I+e_\ell$, and $I+e_j+e_\ell$.
These vertices compose the three bottom faces of $c$, so $T$ is our desired tiling.

The tiling containing the top faces of $c$ is constructed by a similar procedure,
or by performing a flip on $T$.
\end{proof}

\begin{lemma}\label{ncb}\dontshow{ncb}
One can never come back to the same tiling by only doing downward flips.
\end{lemma}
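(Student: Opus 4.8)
The plan is to exhibit a monovariant: a quantity assigned to each tiling that strictly decreases under every downward flip. If such a quantity exists, then a cycle of flips that are all downward would force the quantity to strictly decrease and return to its starting value, a contradiction.

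First I would recall that a tiling $T\subset C$ is the graph of a section $s_T:P\to C$ of the projection $\pi:C\to\RR^2$, by Lemma~\ref{lDm}; concretely, over each point $p\in P$ the tiling records a point $s_T(p)=(s_1(p),\dots,s_n(p))$ with $\sum s_i(p)v_i=p$. Equivalently, one may think of $T$ as a ``height'' assignment: for a generic point $p$ in the interior of a rhombus, $s_T(p)\in\ZZ^n$ is the corresponding vertex of $C$ sitting over that rhombus (really over the vertex used in the reconstruction of Lemma~\ref{lDm}), and the key geometric fact is that a downward flip, performed at a trivalent vertex $J$, replaces the three top faces of a unit cube $c=\{I+xe_j+ye_k+ze_\ell\}$ by its three bottom faces. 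Under this move the section drops precisely on the small hexagonal region $\pi(c)$: the value of $s_T$ there changes from the ``high'' vertex $I+e_j+e_k+e_\ell$ of the cube to the ``low'' vertex $I$, and in particular every coordinate $s_i$ weakly decreases and the coordinate sum $\sum_i s_i$ strictly decreases on that region. So the natural monovariant to try is
\begin{equation*}
\Phi(T)\;:=\;\int_P \Big(\textstyle\sum_{i=1}^n s_i(p)\Big)\,dp,
\end{equation*}
or, since everything is piecewise constant on rhombi, the finite sum $\Phi(T)=\sum_{R}(\text{coordinate sum of the cube-vertex over }R)\cdot\mathrm{Area}(R)$ over the rhombi $R$ of $T$. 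A downward flip changes $\Phi$ by $-2\cdot\mathrm{Area}(\pi(c))<0$ (the high vertex has coordinate sum exactly $2$ more than the low vertex, and the cube contributes over a region of fixed positive area). Hence $\Phi$ strictly decreases under every downward flip, which is exactly what we need: a sequence of downward flips returning to the same tiling would return $\Phi$ to its original value while each step strictly decreased it, impossible.

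The main thing to nail down carefully is the local geometry of a downward flip in terms of the section $s_T$: one must check that the three top faces of the unit cube $c$ project to a hexagon in $P$ whose reconstructed section value is the top vertex $I+e_j+e_k+e_\ell$, that replacing them by the bottom faces reassigns this hexagon the value $I$, and that $s_T$ is unchanged everywhere else — so that $\Phi$ really only changes by the stated amount and the monovariant is well defined independently of how we coordinatize $P$. This is a purely local verification using the dictionary of Lemma~\ref{lDm} (an upward flip raises, a downward flip lowers, consistently with the orientation conventions set just before \eqref{figflip}), and once it is in place the lemma follows immediately. An alternative, essentially equivalent, formulation avoids areas entirely: define $\Phi(T)=\sum_{v\in\vert(T)}(\text{coordinate sum of }v)$, i.e. sum over the vertex set of $T$ rather than over rhombi; a flip removes the vertex $J=I+e_j+e_\ell$ and inserts $J'=I+e_k$ (or vice versa), and one checks the parity/direction conventions so that a downward flip always strictly decreases this integer — this is perhaps the cleanest route and the one I would write up, with the geometric picture above as motivation.
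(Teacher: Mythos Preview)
Your alternative formulation at the end---summing the coordinate sums over the vertex set of $T$---is exactly the paper's proof: the paper sets $\phi(T)=\sum_{I\in\vert(T)}\sum_j i_j$ and observes that a downward flip replaces $I+e_j+e_\ell$ by $I+e_k$, so $\phi$ drops by exactly $1$. Since you say this is the version you would actually write up, you are done, and in one line.

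The integral version you spend most of the proposal on is morally fine but several of the details are off. The section $s_T$ is piecewise \emph{affine} on rhombi, not piecewise constant, so there is no well-defined ``cube-vertex over $R$''; a rhombus of $T$ is a $2$-face of $C$ with four vertices. The description ``$s_T$ changes from $I+e_j+e_k+e_\ell$ to $I$ on the hexagon'' is wrong: both of those vertices lie on the boundary hexagon and belong to the top \emph{and} bottom configurations; the only vertex swap is $I+e_j+e_\ell\leftrightarrow I+e_k$, and those project to different interior points of $\pi(c)$. Consequently the formula $\Delta\Phi=-2\cdot\mathrm{Area}(\pi(c))$ is not correct (the change depends on the angles $\theta_j,\theta_k,\theta_\ell$ in a messier way). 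The integral of $\sum_i s_i$ does strictly decrease under a downward flip, so the argument can be salvaged, but you would have to redo the local computation---at which point the vertex-sum version is strictly simpler and you should just use it.
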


\begin{proof}
Let $\varphi(i_1,\ldots, i_n):=\sum i_j$, and $\phi(T):=\sum_{I\in T}\varphi(I)$. 
If $T\rightsquigarrow T'$ is a downward flip, then $\vert(T')=(\vert(T) \setminus \{I+e_j+e_\ell \}) \cup \{ I+e_k \}$ for some appropriate vertex $I$.
It follows that $\phi(T')=\phi(T)-1$.
Downward flips always decrease the value of $\phi$, so the result follows.
\end{proof}

It will be very important for us to know when exactly we can perform downward flips.
For this purpose, we introduce the following technical definition.

\begin{definition}
Let $T$ be a tiling.
The fundamental forest $\Gamma\subset T$ is the union of all internal edges of the form $(I,I+e_j)$, where $(I,I+e_j)$ is the only edge pointing up from $I$. 
(By an internal edge, we mean an edge $(I,I+e_j)$ such that $\pi( (I,I+e_j) ) \not \subset \partial P$.) \end{definition}

\begin{gather*}
\epsfig{file=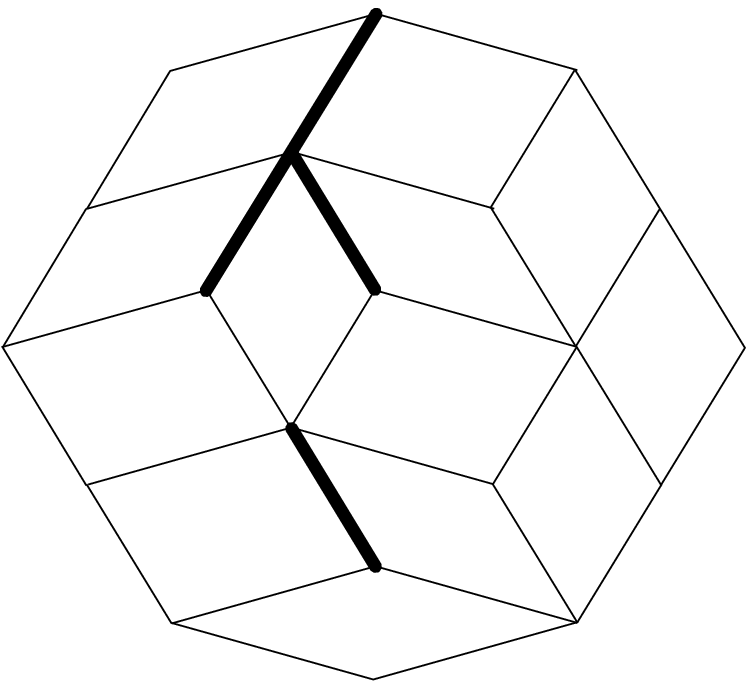, width=3cm}\\
\text{An example of a tiling and its fundamental forest.}
\end{gather*}

We will show in Lemma~\ref{lor} that $\Gamma$ is a forest, justifying its name.

\begin{lemma}\label{ars}\dontshow{ars}
Let $I\in T$ be a vertex, and $\alpha_1,\ldots,\alpha_r$ be the set of edges pointing down from $I$, ordered from left to right.
Then $I$ has edges of $\Gamma$ pointing down from it if and only if $r\ge 3$. 
These edges are then exactly $\alpha_2,\ldots,\alpha_{r-1}$.
\end{lemma}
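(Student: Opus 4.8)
The plan is to analyze the local picture at the vertex $I$ directly in terms of the rhombi of $T$ meeting $I$, using the projection $\pi$ to $\RR^2$. Around $I$ the tiling $T$ induces a cyclic arrangement of rhombi, and the edges out of $I$ — call them $\beta_1,\ldots,\beta_s$ in counterclockwise order — alternate with these rhombi. The edges pointing up from $I$ are exactly those $\beta_t$ with $\pi(\beta_t)$ going into the upper half-plane relative to $\pi(I)$, and the edges pointing down are those going into the lower half-plane; since the $v_i$ are all in the open upper half-plane and we only ever attach $\pm e_i$, the up-edges $(I,I+e_i)$ and down-edges $(I,I-e_i)$ occupy two disjoint angular arcs at $\pi(I)$. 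Write the down-edges as $\alpha_1,\ldots,\alpha_r$ from left to right; the up-edges then form a contiguous block on the other side. The first step is to make precise that ``$(I,I+e_j)$ is the only edge pointing up from $I$'' means the up-block consists of a single edge.

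The key observation is the following. Consider a down-edge $\alpha_m = (I, I-e_i)$. I claim $\alpha_m$ belongs to $\Gamma$ iff $\alpha_m$ is the unique up-edge from the vertex $I' := I - e_i$ at its upper end. The rhombi of $T$ incident to the edge $\alpha_m$ are at most two (it is an internal edge iff there are exactly two, iff $\alpha_m$ is not on $\partial P$). For each such rhombus, its two edges at $I'$ other than $\alpha_m$ determine whether $I'$ has additional up-edges: a rhombus spanned by $-e_i$ and $+e_h$ at $I$ contributes, at $I'=I-e_i$, an up-edge $(I', I'+e_h)$ when ... — here one unwinds the rhombus combinatorics. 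The cleanest route is: $I'$ has edges $(I',I'+e_i)=\alpha_m$ (pointing up, since $\alpha_m$ points down from $I$ means it points up from $I'$) together with one further edge from each rhombus at $\alpha_m$. So $I'$ has exactly one up-edge iff both of those further edges point down from $I'$. Translating the slopes: the rhombus at $\alpha_m$ lying to its \emph{left} (as seen from $I$) is spanned by $-e_i$ and $-e_{i''}$ for the label $i''$ of $\alpha_{m-1}$, or by $-e_i$ and $+e_{j'}$; one checks that the ``left'' rhombus gives a down-edge at $I'$ precisely when $\alpha_m$ is not the leftmost down-edge $\alpha_1$, and symmetrically the ``right'' rhombus gives a down-edge at $I'$ precisely when $\alpha_m$ is not the rightmost $\alpha_r$. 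Hence $\alpha_m\in\Gamma$ iff $2\le m\le r-1$; in particular this forces $r\ge 3$, and conversely if $r\ge 3$ the edges $\alpha_2,\ldots,\alpha_{r-1}$ are exactly the down-edges of $\Gamma$ at $I$. Finally one must note the boundary cases: if $\alpha_m$ is on $\partial P$ it is not internal hence not in $\Gamma$, but a down-edge on the boundary can only be $\alpha_1$ or $\alpha_r$ (the extreme ones), so this is consistent and does not affect the count for $2\le m\le r-1$.

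The main obstacle is the bookkeeping in the middle step: correctly reading off, from a rhombus incident to the down-edge $\alpha_m=(I,I-e_i)$, which edge it contributes at the vertex $I'=I-e_i$ and whether that edge points up or down — and verifying that being the ``left'' (resp. ``right'') rhombus of $\alpha_m$ corresponds exactly to $m=1$ (resp. $m=r$). I would handle this by choosing coordinates so that $\pi(I)$ is at the origin and listing the angular order of the vectors $\pm v_1,\ldots,\pm v_n$ explicitly; then a rhombus at $I$ with edge-directions $u, u'$ appears in the cyclic order at $I$ between the edges $u$ and $u'$, and at the opposite vertex $I'=I+u$ of that rhombus the two incident edges are $-u$ and $u'$, whose up/down status is determined by the sign of their second coordinate. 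Carrying this out for the ``left'' and ``right'' neighbors of $\alpha_m$ in the down-arc, and separately checking what happens when $\alpha_m$ sits at the junction between the down-arc and the up-arc (i.e.\ when $m=1$ or $m=r$, so that one of its neighboring rhombi is spanned by a down-direction and an up-direction), completes the argument.
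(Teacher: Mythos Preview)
Your approach is correct and essentially the same as the paper's: both arguments look at the two rhombi adjacent to the down-edge $\alpha_m$ and read off what edges they contribute at the lower vertex $I'=I-e_i$, concluding that $\alpha_m$ is the unique up-edge at $I'$ exactly when $1<m<r$. The paper organizes this as two separate cases (showing directly that the left rhombus of $\alpha_1$ produces an extra up-edge at $I_1$, and that for $1<j<r$ any hypothetical extra up-edge at $I_j$ would have to cross one of the two adjacent rhombi), whereas you package it as a single biconditional; the one place where the paper is a bit more explicit is the step ``$I'$ has exactly one up-edge iff both of those further edges point down,'' which relies on your earlier contiguous-arc observation but which you do not invoke at that moment.
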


\begin{proof}
Let $I_j$ denote the lower vertex of $\alpha_j$.
By definition, $\alpha_j\subset\Gamma$ if and only if $\alpha_j$ is internal and $I_j$ has exactly one edge pointing up from it.
We first show that $\alpha_1\not\in\Gamma$.
If $\alpha_1$ is a boundary edge, this is immediate since $\Gamma$ consists only of internal edges.
If $\alpha_1$ is an internal edge, then there exists a rhombus $R$ on its left.
Let $\beta$, $\beta'$ be the edges of $R$ incident to $I$, $I'$ respectively.
Since $\alpha_1$ is the leftmost edge pointing down, $\beta$ must be pointing up from $I$.
Since $\beta$ and $\beta'$ are parallel, $\beta'$ is also pointing up from $I_1$.
It follows that $I_1$ has more than one edge pointing up from it, and therefore that $\alpha_1\not\in\Gamma$.
Similarly, $\alpha_r$ is not in $\Gamma$.

\vspace{.3cm}{
\psfrag{i}{$I$}
\psfrag{i1}{$I_1$}
\psfrag{i2}{$I_2$}
\psfrag{i3}{$I_3\ldots$}
\psfrag{a1}{$\alpha_1$}
\psfrag{a2}{$\alpha_2$}
\psfrag{a3}{$\alpha_3$}
\psfrag{b}{$\beta$}
\psfrag{bp}{$\beta'$}
\psfrag{r}{$R$}
\centerline{\epsfig{file=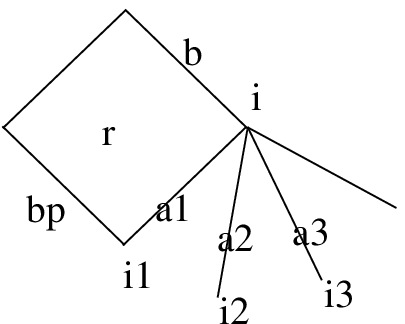, width=3cm}
\qquad\raisebox{20pt}{Two edges pointing up from $I_1$.}}
}\vspace{.3cm}

We now show that $\alpha_j\in\Gamma$ for $1<j<r$.
Since there are no edges coming down from $I$ between $\alpha_{j-1}$ and $\alpha_j$, there must be a rhombus adjacent to these two edges.
Similarly, there is a rhombus adjacent to $\alpha_j$ and $\alpha_{j+1}$.
An edge $(I_j,I')$, $I'\not=I$, pointing up from $I_j$ would intersect one of these two rhombi.
It follows that $\alpha_j$ is the only edge pointing up from $I_j$.
Hence $\alpha_j\subset\Gamma$.

\vspace{.3cm}{
\psfrag{i}{$I$}
\psfrag{ij}{$I_j$}
\psfrag{ijp}{$I_{j+1}$}
\psfrag{ijm}{$I_{j-1}$}
\psfrag{aj}{$\alpha_j$}
\psfrag{ajm}{$\alpha_{j-1}$}
\psfrag{ajp}{$\alpha_{j+1}$}
\psfrag{r}{}
\psfrag{rp}{}
\psfrag{ipp}{$I'$}
\centerline{\epsfig{file=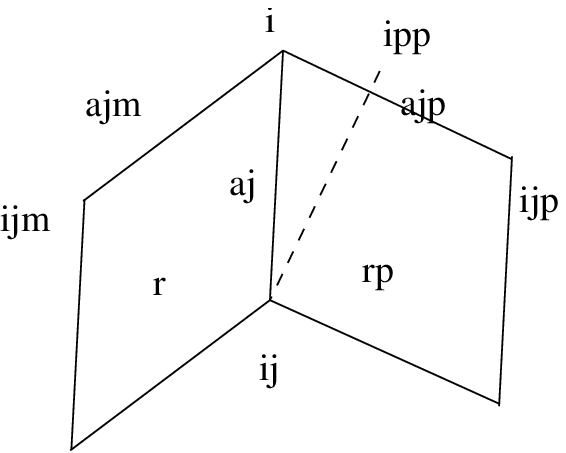, width=3cm}
\quad\raisebox{15pt}{$I'$ cannot be in $T$.}}
}\vspace{.1cm}
\end{proof}

\begin{lemma}\label{lor}\dontshow{lor}
The graph $\Gamma$ is a rooted forest.
A vertex $I\in T$ is a leaf of $\Gamma$ if and only if it is an internal vertex of $T$ with two edges pointing down from it and one pointing up.
\end{lemma}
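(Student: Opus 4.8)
The plan is to derive all parts of the lemma from two elementary facts about $\Gamma$, together with Lemma~\ref{ars}. First, every edge of $\Gamma$ has the form $(I,I+e_j)$, so --- writing $\varphi(i_1,\dots,i_n):=\sum i_k$ as in Lemma~\ref{ncb} --- the two endpoints of a $\Gamma$-edge have $\varphi$-values differing by exactly $1$; I will call the lower one the \emph{base} of that edge. Second, the very definition of $\Gamma$ says that an edge of $\Gamma$ based at $I$ is the \emph{only} edge of $T$ pointing up from $I$, so at most one edge of $\Gamma$ is based at any given vertex (equivalently: at most one edge of $\Gamma$ points up from $I$).

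To prove $\Gamma$ is a forest I would argue by contradiction. On a hypothetical simple cycle in $\Gamma$, pick a vertex $I$ minimizing $\varphi$ along the cycle; its two cycle-neighbours are distinct, and by the first fact each has $\varphi$-value in $\{\varphi(I)-1,\varphi(I)+1\}$, hence equal to $\varphi(I)+1$ since $I$ minimizes $\varphi$. Then both cycle-edges at $I$ are based at $I$, contradicting the second fact, so $\Gamma$ is acyclic. For the rooted structure I would count inside a single connected component, which is a tree with $N$ vertices and $N-1$ edges: the map sending an edge of $\Gamma$ to its base is injective by the second fact, so exactly one vertex of the component fails to be a base, i.e.\ has no edge of $\Gamma$ pointing up from it. Declaring that vertex the root (and letting the parent of every other vertex be the top endpoint of its unique upward $\Gamma$-edge) exhibits $\Gamma$ as a rooted forest.

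For the leaf characterization, a leaf is a vertex lying on $\Gamma$ with no children, a child of $I$ being the base of a $\Gamma$-edge pointing down from $I$. So ``no children'' means ``no $\Gamma$-edge points down from $I$'', which by Lemma~\ref{ars} is exactly the condition that $I$ have at most two edges pointing down. Since a leaf lies on $\Gamma$ but has no children, it must have a $\Gamma$-edge pointing \emph{up} from it; by definition of $\Gamma$ this means $I$ has exactly one upward edge and that edge is internal. The one genuinely fiddly step --- and the main obstacle --- is upgrading ``the unique upward edge is internal'' to ``$I$ itself is internal'': here I would inspect the boundary of the zonogon $P$ and check that every boundary vertex except the top vertex $\pi(\sum a_i e_i)$ has at least one of its two $\partial P$-edges pointing up, while the top vertex has no upward edge at all; hence a boundary vertex can never have its sole upward edge be internal. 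Conversely, if $I$ is internal with a unique upward edge, that edge is automatically internal and so lies in $\Gamma$. Finally, an internal vertex of a rhombus tiling has degree $\ge 3$ (the angles of the rhombi around it sum to $2\pi$, each being $<\pi$), so an internal vertex with exactly one upward edge has at least two downward edges; combined with ``at most two'' this forces exactly two. Assembling these equivalences gives precisely: $I$ is a leaf of $\Gamma$ if and only if $I$ is an internal vertex of $T$ with one edge pointing up and two pointing down.
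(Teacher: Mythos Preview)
Your proof is correct and follows essentially the same approach as the paper: both hinge on the observation that at most one $\Gamma$-edge points up from any vertex (the paper phrases this as ``out-degree $\le 1$ in a DAG'' using the $y$-coordinate, you use $\varphi$ and a minimum-on-a-cycle contradiction), and both invoke Lemma~\ref{ars} for the leaf characterization. Your treatment of the step from ``the unique upward edge is internal'' to ``$I$ is an internal vertex'' is in fact more careful than the paper's, which asserts this equivalence without the boundary-vertex case analysis you supply.
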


\begin{proof}
A vertex has an edge of $\Gamma$ pointing up from it if and only if it has exactly one edge pointing up from it. 
In particular, a vertex of $\Gamma$ has at most one edge of $\Gamma$ pointing up from it. 
Orient each edge in the direction of increasing $y$-coordinate.
Then $\Gamma$ has no directed cycles.
A directed acyclic graph in which every vertex has out-degree at most $1$ is a rooted forest.

A leaf is a vertex $I$ with an edge of $\Gamma$ pointing up, and no edges of $\Gamma$ pointing down.
Having an edge of $\Gamma$ pointing up is equivalent to having exactly one edge of $T$ pointing up, and that edge being internal.
In other words, it is equivalent to $I$ being an internal vertex with exactly one edge of $T$ pointing up.
By Lemma~\ref{ars}, having no edges of $\Gamma$ pointing down is equivalent to having at most two edges of $T$ pointing down.
Since internal vertices have valence $\ge 3$, this means $I$ has exactly two edges pointing down.
\end{proof}

\begin{corollary}\label{sov}\dontshow{sov}
The set of vertices at which downward flips can be performed is equal to the set of leaves of $\Gamma$.\hfill $\square$
\end{corollary}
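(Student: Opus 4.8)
The plan is to deduce the corollary by matching two facts that are already on the table: the characterization of leaves of $\Gamma$ given in Lemma~\ref{lor}, and the characterization of downward flips given in the discussion preceding Lemma~\ref{ncb} (around~(\ref{figflip})). Lemma~\ref{lor} says a vertex $I\in T$ is a leaf of $\Gamma$ exactly when it is an \emph{internal} vertex with two edges pointing down and one pointing up. On the other side, we know that a flip can be performed at $J$ iff $J$ is trivalent, and that among trivalent vertices the ones admitting a \emph{downward} flip are exactly those with two down-edges and one up-edge. So the task reduces to showing that, for a vertex $J$ with two down-edges and one up-edge, ``$J$ is internal'' is equivalent to ``a downward flip can be performed at $J$''.

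For one direction, I would observe that performing a downward flip at $J$ replaces the three rhombi of $T$ meeting at $J$ (the top faces of a cube $c\subset C$ having $J$ as its top vertex) by the three bottom faces; hence there must be three rhombi of $T$ surrounding $J$, which forces $J\notin\partial P$, since a trivalent boundary vertex of a tiling of the convex polygon $P$ has only two incident rhombi. Conversely, suppose $J$ is an internal vertex with down-edges $(J,J-e_a)$, $(J,J-e_b)$ (say $a<b$) and up-edge $(J,J+e_c)$. Since $J$ is internal, the three rhombi around it partition the full angle at $J$ into the three sectors between consecutive edges, and each sector is the interior angle of a rhombus, hence $<\pi$; a short computation with the directions $v_i=(\cos\theta_i,\sin\theta_i)$ shows that the two sectors adjacent to the up-edge have width $<\pi$ precisely when $a<c<b$. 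Setting $j=a$, $k=c$, $\ell=b$ and $I=J-e_j-e_\ell$, one then identifies the three rhombi at $J$ as the top faces of the cube $c=\{I+xe_j+ye_k+ze_\ell : x,y,z\in[0,1]\}$; checking $c\subset C$ only requires noting that the seven vertices of those top faces lie in $\vert(T)\subset\Pi$ and that the eighth vertex $I+e_k$ lies in $\Pi$ because $\Pi$ is a box containing both $I$ and $I+e_j+e_k+e_\ell$. So the downward flip at $J$ across $c$ is defined.

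Putting the two directions together: $J$ is a leaf of $\Gamma$ $\iff$ $J$ is internal with two down-edges and one up-edge $\iff$ a downward flip can be performed at $J$. There is no genuinely hard step here; the corollary is essentially immediate from Lemma~\ref{lor}. The one place a little care is needed is the angular bookkeeping that promotes ``$J$ internal, trivalent, two down, one up'' to ``there is a legitimate cube $c\subset C$ with the correct index ordering $j<k<\ell$'', i.e., that the combinatorial edge pattern at an internal trivalent vertex is automatically compatible with the conventions used to define flips.
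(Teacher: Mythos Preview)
Your proof is correct and follows the same route as the paper, which treats the corollary as immediate from Lemma~\ref{lor} together with the description of downward-flip vertices preceding~(\ref{figflip}); you have simply filled in the details the paper leaves implicit. In particular, your angular check that an internal trivalent vertex with down-edges $-e_a,-e_b$ ($a<b$) and up-edge $+e_c$ forces $a<c<b$, and your verification that the resulting cube lies in $C$, are exactly the bookkeeping the paper's ``$\square$'' is hiding.
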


Given a fundamental forest $\Gamma$ and a vertex $v\in\Gamma$, we define the link of $v$ to be the set 
$\lk(v,\Gamma)$ of edges of $\Gamma$ pointing down from $v$.
If $v\not\in\Gamma$, then we let $\lk(v,\Gamma)=\emptyset$.

\begin{lemma}\label{unT}\dontshow{unT}
Let $T$, $T'$ be two distinct tilings of $P$, and let $\Gamma$, $\Gamma'$ be their respective fundamental forests.
Let $P_=$ be the image in $\RR^2$ of $T\cap T'$, and let $P_{\not =}$ be the closure of its complement.
Let $I\in P_{\not =}$ be a vertex with highest $y$-coordinate.
Then $\lk(I,\Gamma) \neq \lk(I,\Gamma')$.
\end{lemma}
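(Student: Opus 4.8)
The plan is to analyze the local structure of $T$ and $T'$ in a neighborhood of the vertex $I$, exploiting the fact that $I$ has the highest $y$-coordinate among all vertices in $P_{\neq}$. The key consequence of this maximality is that \emph{every edge of $T$ pointing up from $I$ lies in $P_=$}, i.e. such an edge is an edge of both $T$ and $T'$; indeed, the upper endpoint of any edge pointing up from $I$ has strictly larger $y$-coordinate, so it cannot lie in $P_{\neq}$, and one checks that the edge itself must therefore be in $T\cap T'$. The same holds with $T'$ in place of $T$. Since a tiling determines, at each internal vertex, how the rhombi fit together, I would argue that $T$ and $T'$ have \emph{the same set of upward edges at $I$}: the upward edges at $I$ in $T$ are shared with $T'$, and a count of rhombus angles around $I$ (the angles must sum to the appropriate total interior angle, and the shared boundary data of $P_=$ near $I$ is forced) shows neither tiling can have an extra upward edge the other lacks.

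Next I would use Lemma~\ref{ars} to translate information about upward edges at $I$ into information about $\lk(I,\Gamma)$ and $\lk(I,\Gamma')$. Recall that $I$ contributes edges $\alpha_2,\dots,\alpha_{r-1}$ to $\Gamma$, where $\alpha_1,\dots,\alpha_r$ are the downward edges at $I$ listed left to right and $r$ is their number. So $\lk(I,\Gamma)$ is determined by the multiset of downward edges at $I$ in $T$, together with their left-to-right order; likewise for $T'$. Since the full cyclic arrangement of edges around $I$ consists of the upward edges (shared, by the previous paragraph) and the downward edges, and since the downward edges of $T$ at $I$ that happen to lie in $P_=$ are also downward edges of $T'$, the only way to have $\lk(I,\Gamma)=\lk(I,\Gamma')$ would be for $T$ and $T'$ to agree on \emph{all} edges incident to $I$. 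But then, because a tiling near an internal vertex is determined by its incident edges (the rhombi between consecutive edges are forced), $T$ and $T'$ would agree on a neighborhood of $I$, contradicting that $I$ is a vertex of $P_{\neq}$ — a point where $T$ and $T'$ genuinely differ. Hence $\lk(I,\Gamma)\neq\lk(I,\Gamma')$.

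The main obstacle I anticipate is making the phrase ``$I$ is a vertex of $P_{\neq}$'' do the right work: $P_{\neq}$ is defined as the closure of the complement of $P_=$, so $I$ could a priori be a boundary vertex of $P_{\neq}$ at which $T$ and $T'$ actually coincide locally. I would need to rule this out by a careful argument that at the top vertex $I$ of $P_{\neq}$, the two tilings must differ on some edge or rhombus incident to $I$ pointing \emph{downward} — otherwise a small neighborhood of $I$ would be contained in $P_=$, and then $I$ would lie in the interior of $P_=$ and not be a vertex of the closure of the complement. Combined with the fact (from maximality of $y(I)$) that all upward data at $I$ is shared, this forces the downward edge structure — hence the links — to differ. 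A secondary, more routine point is the bookkeeping of angles/edges around $I$ to justify ``same upward edges,'' which I would phrase in terms of the projected picture in $\RR^2$: the region of $P_=$ lying above the horizontal line through $\pi(I)$ pins down the cone of directions occupied by upward rhombi at $I$ identically in $T$ and $T'$.
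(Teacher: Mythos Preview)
Your approach is essentially the paper's, but there is a genuine gap in your contradiction step. You argue: upward edges at $I$ are shared; assume $\lk(I,\Gamma)=\lk(I,\Gamma')$; conclude that \emph{all} edges at $I$ agree. But Lemma~\ref{ars} only gives $\lk(I,\Gamma)=\{\alpha_2,\dots,\alpha_{r-1}\}$ and $\lk(I,\Gamma')=\{\beta_2,\dots,\beta_{s-1}\}$, so equality of links says nothing about the \emph{outermost} downward edges $\alpha_1,\alpha_r$ versus $\beta_1,\beta_s$. Sharing the upward edges does not by itself force $\alpha_1=\beta_1$: there is no a priori reason the two tilings could not place different leftmost downward edges in the same angular gap between the shared leftmost upward edge and the shared $\alpha_2=\beta_2$.

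The paper closes exactly this gap, and in fact organizes the whole proof around it. It shows directly that $\alpha_1=\beta_1$ and $\alpha_r=\beta_s$: if, say, $\alpha_1\neq\beta_1$, then the rhombus immediately to the left of $\alpha_1$ (which has an upward edge out of $I$ as its other side and therefore reaches above $I$) is not shared, producing a point of $P_{\neq}$ with larger $y$-coordinate than $I$. With the outermost downward edges pinned down, the paper's logic runs forward rather than by contradiction: the full downward edge sets $\{\alpha_1,\dots,\alpha_r\}$ and $\{\beta_1,\dots,\beta_s\}$ differ (since $I\in P_{\neq}$), their first and last elements agree, hence the inner sets $\{\alpha_2,\dots,\alpha_{r-1}\}$ and $\{\beta_2,\dots,\beta_{s-1}\}$ differ, and those are precisely the two links. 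Your observation about shared upward edges is correct and is morally the same fact, but you need to push it one step further to the adjacent rhombi (and hence to $\alpha_1,\alpha_r$) before the argument closes.
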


\begin{proof}
Since $I$ is on the boundary of $P_{\not =}$ we also have $I\in P_=$\,.
Therefore $I$ belongs to both $T$ and $T'$.
Let $\alpha_1,\ldots,\alpha_r$ (respectively $\beta_1,\ldots,\beta_s$) be the set of edges of $T$ (resp. $T'$) pointing down from $I$, 
and suppose that they are ordered from left to right.
These sets are different because of our assumption on $I$.
However, we do have $\alpha_1=\beta_1$ and $\alpha_r=\beta_s$ because otherwise $I$ would not be the highest vertex of $P_{\not =}$.
It follows that $\{\alpha_2,\ldots,\alpha_{r-1}\}\not=\{\beta_2,\ldots,\beta_{s-1}\}$.
By Lemma~\ref{ars} the set of edges of $\Gamma$ (respectively $\Gamma'$) pointing down from $I$ is exactly $\alpha_2,\ldots,\alpha_{r-1}$ (resp. $\beta_2,\ldots,\beta_{s-1}$).
We have just established that these two sets are different. 
It follows that $\lk(I,\Gamma) \neq \lk(I,\Gamma')$.
\end{proof}

\begin{corollary}\label{to7}\dontshow{to7}
If $T\not =T'$ then $\Gamma\not =\Gamma'$.\hfill $\square$
\end{corollary}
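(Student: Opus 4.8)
The plan is to read this off directly from Lemma~\ref{unT}. Suppose $T \neq T'$. Since $\pi$ restricts to a homeomorphism on each of $T$ and $T'$, the uniqueness half of Lemma~\ref{lDm} tells us that distinct tilings project to distinct rhombus decompositions of $P$; in particular $T \cap T'$ cannot be all of $T$. Picking a $2$-cell of $T$ not lying in $T'$, its image under $\pi$ is an honest rhombus whose open interior is disjoint from $\pi(T \cap T')$ (the only cell of $T$ lying over an interior point of that rhombus is the $2$-cell itself, and that cell is not in $T'$). Hence $P_{\not=}$ has nonempty interior, and therefore contains at least one vertex. Let $I$ be a vertex of $P_{\not=}$ of maximal $y$-coordinate, so that Lemma~\ref{unT} applies.

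Lemma~\ref{unT} then yields $\lk(I,\Gamma) \neq \lk(I,\Gamma')$. On the other hand, $\lk(I,\Gamma)$ is by definition the set of edges of $\Gamma$ pointing down from $I$, hence is a function of $\Gamma$ alone; so $\Gamma = \Gamma'$ would force $\lk(I,\Gamma) = \lk(I,\Gamma')$, a contradiction. Therefore $\Gamma \neq \Gamma'$, as claimed. I do not expect any real obstacle: the entire content has been placed into Lemma~\ref{unT}, and the only step needing a moment of care is verifying that $P_{\not=}$ actually contains a vertex, which is precisely where the uniqueness statement of Lemma~\ref{lDm} is used.
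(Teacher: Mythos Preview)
Your argument is correct and follows exactly the route the paper intends: the corollary is marked with a bare $\square$ because it is meant to be read off directly from Lemma~\ref{unT}, and that is precisely what you do. The only extra work you add is verifying that $P_{\neq}$ actually contains a vertex; the paper treats this as implicit in the hypothesis ``$T$, $T'$ distinct'' of Lemma~\ref{unT}, and indeed it follows at once since any $2$-face $R\in T\setminus T'$ has $\pi(\mathrm{int}\,R)$ disjoint from $\pi(T\cap T')$, so its vertices lie in $P_{\neq}$.
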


%

\begin{Proposition} \label{Kei}\dontshow{Kei} 
There is exactly one tiling $\Tmin$ on which no downward flip can be performed.
It is the unique tiling with empty fundamental forest.
\end{Proposition}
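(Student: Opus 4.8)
The plan is to extract the statement from the combinatorics of the fundamental forest that we have already built up. By Corollary~\ref{sov}, the vertices at which downward flips can be performed are precisely the leaves of $\Gamma$, so a tiling admits no downward flip if and only if its fundamental forest has no leaves; since $\Gamma$ is a rooted forest (Lemma~\ref{lor}), having no leaves is equivalent to being empty. So the first reduction is: \emph{$T$ admits no downward flip $\iff$ $\Gamma = \emptyset$.} This immediately gives the second sentence of the Proposition once we have the first, and also shows the two characterizations are equivalent. What remains is existence and uniqueness of a tiling with empty fundamental forest.

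For \textbf{existence}, I would argue by the descent already isolated in Lemma~\ref{ncb}: the quantity $\phi(T) = \sum_{I\in T}\varphi(I)$ strictly decreases under every downward flip and takes values in $\ZZ_{\ge 0}$, so starting from any tiling (one exists, e.g.\ by Lemma~\ref{LA}) and performing downward flips as long as possible, the process must terminate. A tiling at which no further downward flip is possible is, by the reduction above, one with $\Gamma = \emptyset$. (This is the content of Corollary~\ref{cht} referenced in the section overview; here I am only using the elementary termination half of it.)

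For \textbf{uniqueness}, suppose $T$ and $T'$ both have empty fundamental forest but $T \neq T'$. Apply Lemma~\ref{unT}: letting $P_{\neq}$ be the closure of the complement of $\pi(T\cap T')$ and $I$ a vertex of $P_{\neq}$ of maximal $y$-coordinate, the lemma gives $\lk(I,\Gamma)\neq\lk(I,\Gamma')$. But $\Gamma=\Gamma'=\emptyset$ forces $\lk(I,\Gamma)=\lk(I,\Gamma')=\emptyset$, a contradiction. Hence $T=T'$, and we christen this unique tiling $\Tmin$.

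The main obstacle is essentially nonexistent at this stage: all the real work has been front-loaded into Lemmas~\ref{ars}, \ref{lor}, \ref{unT} and Corollary~\ref{sov}, so the proof is a short assembly. The only point requiring a moment's care is making the equivalence ``no downward flip $\iff$ empty forest'' airtight — one must note that a nonempty rooted forest always has a leaf (a maximal directed path ends at one, using that out-degree is $\le 1$ and there are no directed cycles, as in the proof of Lemma~\ref{lor}), so nonempty $\Gamma$ genuinely yields a performable downward flip via Corollary~\ref{sov}.
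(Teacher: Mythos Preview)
Your proof is correct and follows essentially the same approach as the paper's: existence via the descent argument of Lemma~\ref{ncb}, the identification of ``no downward flip'' with ``empty forest'' via Corollary~\ref{sov}, and uniqueness from the forest-determines-tiling principle. The only cosmetic difference is that the paper packages the uniqueness step through Corollary~\ref{to7} rather than invoking Lemma~\ref{unT} directly, and your explicit remark that a nonempty rooted forest must have a leaf is a detail the paper leaves implicit.
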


				   
\begin{proof}
By Proposition~\ref{ncb}, there must be some tiling from which no downward flip is possible. 
By Corollary~\ref{sov}, if $T$ is a tiling from which no downward flip is possible, then the fundamental forest of $T$ is empty. 
But by Corollary~\ref{to7}, there is at most one tiling whose fundamental forest is empty.
So there is only one tiling from which no downward flip is possible.
\end{proof}

\begin{corollary}\label{cht}\dontshow{cht}
Let $T$ be a tiling and $T=T_0\rightsquigarrow T_1\rightsquigarrow T_2\rightsquigarrow\ldots$ be any sequence of downward flips. 
Then this process will eventually lead to $\Tmin$.
\end{corollary}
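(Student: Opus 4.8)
The plan is to combine the monovariant from the proof of Lemma~\ref{ncb} with Proposition~\ref{Kei}. Recall that in Lemma~\ref{ncb} one introduces $\varphi(i_1,\ldots,i_n):=\sum_j i_j$ and $\phi(T):=\sum_{I\in\vert(T)}\varphi(I)$, and one checks that a downward flip $T\rightsquigarrow T'$ replaces $I+e_j+e_\ell$ in $\vert(T)$ by $I+e_k$ with $j<k<\ell$, hence $\varphi$ drops by $(1+1)-1=1$ and so $\phi(T')=\phi(T)-1$. The key additional observation is simply that $\phi$ is bounded below: every vertex $I\in\Pi$ has $\varphi(I)\ge 0$, so $\phi\ge 0$ on all tilings (alternatively, there are only finitely many rhombus tilings of $P$, so $\phi$ takes finitely many values).

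First I would argue that the process terminates: along the sequence $T=T_0\rightsquigarrow T_1\rightsquigarrow\cdots$ of downward flips we have $\phi(T_{i+1})=\phi(T_i)-1$, so after at most $\phi(T_0)$ steps we reach a tiling $T_N$ from which no further downward flip is possible. Then I would invoke Proposition~\ref{Kei}: it asserts that there is \emph{exactly one} tiling on which no downward flip can be performed, and that this tiling is $\Tmin$. Hence the terminal tiling $T_N$ of our sequence must equal $\Tmin$, which is the claim.

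I do not expect a genuine obstacle here, since both inputs are already in hand: the strict decrease of $\phi$ gives termination for free, and Proposition~\ref{Kei} identifies the only possible endpoint. The only point worth a sentence of care is that ``any sequence of downward flips'' should be read as a sequence continued as long as downward flips remain available (a maximal such sequence); the two things to verify are that such a sequence is necessarily finite (the $\phi$-argument above) and that its last term admits no downward flip (true by maximality) so that Proposition~\ref{Kei} applies. I would phrase the write-up in exactly that two-line order: $\phi$ strictly decreases and is bounded below, so the sequence ends at some $T_N$ with no outgoing downward flip; by Proposition~\ref{Kei}, $T_N=\Tmin$. $\square$
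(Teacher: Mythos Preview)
Your proposal is correct and matches the paper's own proof essentially verbatim: the paper also invokes Lemma~\ref{ncb} for termination and then Proposition~\ref{Kei} to identify the endpoint as $\Tmin$. Your extra remark about reading ``any sequence'' as a maximal one is a reasonable clarification but not needed beyond what the paper assumes.
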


\begin{proof}
By Lemma~\ref{ncb}, the process of doing downward flips terminates.
By Proposition~\ref{Kei}, it can only terminate at $\Tmin$.
\end{proof}

We can give a geometric construction of $\Tmin$.
Since $\Gamma$ is empty, we know by Lemma~\ref{ars} that each vertex $I\in T$ can have at most two edges pointing down from it.
Starting from $P$, we build a tiling with the above property step by step.
Geometrically, that property means that when we see a vertex that has two edges pointing down from it, we must always fill that angle with a single rhombus. 
The empty polygon $P$ already has one such vertex, namely the top vertex $(a_1, \ldots, a_n)$. 
Filling this in and then filling in the rhombi that this forces \emph{etc}, it is easy to see that there is only one possibility.

\begin{gather*}
\psfrag{s}{$\Rightarrow$}
\psfrag{sd}{$\Rightarrow\,\,\ldots\,\,\Rightarrow$}
\epsfig{file=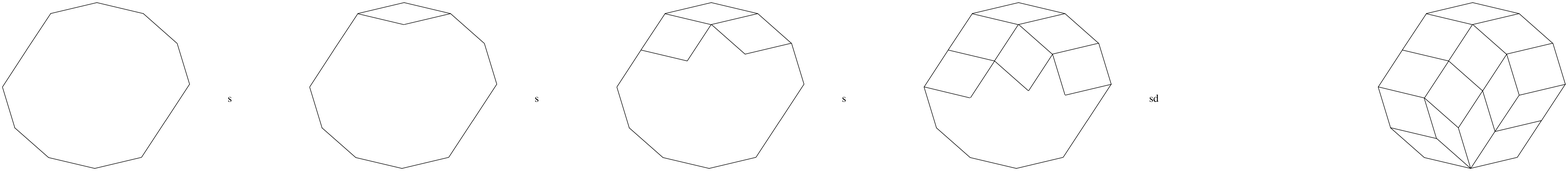, width=12cm} \\
\text{The construction of $\Tmin$ step by step.}
\end{gather*}

\begin{Proposition} \label{OTF}
The vertices of $\Tmin$ are those of the form
\dontshow{otf}
\begin{equation}\label{otf}
\hspace{1.8cm}(0,\ldots,0,b,a_{r+1},\ldots,a_{s-1},b',0,\ldots,0),\qquad \text{\small $1\!\le\! r\!\le\! s\!\le\! n,\,0\!<\! b\!\le\! a_r,\, 0\!<\! b'\!\le\! a_s$},
\end{equation}
along with the zero vector. Here, the numbers $a_i$ are those used in (\ref{dfP}).
\end{Proposition}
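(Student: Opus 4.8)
The plan is to combine the two characterizations of $\Tmin$ already established: it is the unique tiling whose fundamental forest $\Gamma$ is empty (Proposition~\ref{Kei}), equivalently (by Lemma~\ref{ars}) the unique tiling in which every vertex has at most two edges pointing down from it. So it suffices to exhibit a tiling $T$ whose vertex set is exactly the claimed set $V$, and then check that $T$ has the ``at most two down-edges'' property; uniqueness then identifies $T=\Tmin$. First I would verify that $V$ is the vertex set of an actual tiling. One clean way is to use the geometric construction already sketched after Corollary~\ref{cht}: start from $P$ and repeatedly fill the forced angles. Alternatively, and perhaps more transparently for bookkeeping, I would describe $T$ directly as a union of rhombi. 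Label the $2n$ boundary vertices as in Section~\ref{scz}; the left boundary of $P$ is the path $\pi(0),\pi(e_n),\pi(e_{n-1}+e_n),\dots,\pi(e_1+\dots+e_n)$ and the right boundary is $\pi(0),\pi(e_1),\pi(e_1+e_2),\dots$. The tiling $\Tmin$ should consist, for each pair $r\le s$, of the ``parallelogram'' of $a_r a_s$ (or appropriate) unit rhombi with edge directions $v_r$ and $v_s$, stacked according to the lexicographic nesting encoded in (\ref{otf}); the vertices appearing in such a parallelogram are exactly the lattice points $(0,\dots,0,b,a_{r+1},\dots,a_{s-1},b',0,\dots,0)$.

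The core computation is then: show that a point $I=(0,\dots,0,b,a_{r+1},\dots,a_{s-1},b',0,\dots,0)$ with $1\le r\le s\le n$, $0<b\le a_r$, $0<b'\le a_s$ (or $I=0$) lies in the candidate tiling, and that it has at most two downward edges there. The downward edges at a vertex $I$ of a tiling are among $(I,I-e_i)$ for indices $i$ with $i_i>0$; here those are $i=r$, $i=s$ (when $r\ne s$), and $i\in\{r+1,\dots,s-1\}$ with $a_i>0$, i.e.\ all of $r+1,\dots,s-1$. So a priori $I$ could have up to $s-r+1$ downward edges present in $\Pi$, and the content is that in $\Tmin$ only two of them — the extreme directions $v_r$ and $v_s$ — actually occur as edges, the intermediate ones being ``swallowed'' by rhombi. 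I would argue this by exhibiting, for each consecutive pair of directions among $r,r+1,\dots,s$, the rhombus of $T$ that fills the angle between the two corresponding down-rays at $I$, exactly as in the proof of Lemma~\ref{ars} (the picture with $\alpha_{j-1},\alpha_j,\alpha_{j+1}$ and the forced rhombi). Concretely, the rhombus with vertices $I, I-e_i, I-e_i+e_{i-1}?\dots$ — one has to be careful with signs, but the point is that between the $v_i$ and $v_{i+1}$ down-directions there sits a rhombus of $T$ whenever $r\le i<s$, leaving only the leftmost ($v_r$) and rightmost ($v_s$) down-edges genuinely present. Hence every vertex of $T$ has at most two down-edges, so $\Gamma(T)=\emptyset$, so $T=\Tmin$ by Proposition~\ref{Kei}, and its vertex set is $V$.

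I expect the main obstacle to be purely organizational rather than conceptual: pinning down the exact combinatorial description of the rhombi of $\Tmin$ and verifying, with correct indices and signs, that (i) they really do tile $P$ (edge-directions and lengths match up along the boundary, total area is right, no overlaps) and (ii) they account for all and only the vertices in $V$. A slick shortcut to avoid re-deriving that $T$ is a tiling from scratch is to take $T$ to be the output of the forced-filling algorithm (which manifestly terminates in a tiling by the discussion after Corollary~\ref{cht}), and then only prove the set equality $\vert(T)=V$ by induction on the filling steps: each step fills the angle at a current highest ``two-down-edges'' vertex with one rhombus, introducing exactly one new vertex, and one checks this new vertex again has the form (\ref{otf}). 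The induction hypothesis is that the already-filled region has vertex set precisely the points of $V$ with $y$-coordinate above some threshold, and the inductive step is a short case analysis on which directions $r,s$ the new rhombus lies between. Either route reduces the proposition to a finite, if slightly tedious, verification; no new idea beyond Lemma~\ref{ars} and Proposition~\ref{Kei} is needed.
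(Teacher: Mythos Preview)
Your plan is correct and follows the paper's two-step strategy: exhibit a tiling $T$ whose vertex set is exactly the set $V$ described in~(\ref{otf}) (the paper does this by your first route, declaring an edge between any two points of $V$ differing by a unit vector and checking by a short case split on $r,s,b,b'$ that every internal edge lies in exactly two rhombi), and then show $\Gamma(T)=\emptyset$, so that $T=\Tmin$ by Proposition~\ref{Kei}. For the second step the argument is simpler than your ``swallowed by rhombi'' sketch: for $r<i<s$ the point $I-e_i$ has $i$-th coordinate $a_i-1$ and so is \emph{not} of the form~(\ref{otf}), hence $(I,I-e_i)$ is not an edge of $T$ at all and each vertex has only the down-edges in directions $e_r$ and $e_s$, giving $\Gamma=\emptyset$ by Lemma~\ref{ars}.
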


\begin{proof}
We first show that these vertices are indeed the vertices of a tiling, and then show that it admits no downward flips.

Put an edge between two vertices (\ref{otf}) if their difference is a unit vector.
For each internal edge $\alpha$, we need to verify that it has exactly two rhombi adjacent to it, one its right and one on its left.
Let $\alpha=(I, I')$ be such an edge, and suppose that $I$ is higher than $I'$.
Writing $I$ in the form (\ref{otf}), we then have $I'=I-e_r$ or $I'=I-e_s$. 
We treat the case $I'=I-e_r$, the other one being symmetric.
If $r=s$, then the two rhombi adjacent to $\alpha$ are $(\alpha,\alpha+e_{r-1})$ and $(\alpha,\alpha+e_{r+1})$.
If $r\not = s$, then we have two cases.
If $b'<a_s$, then the two rhombi are $(\alpha,\alpha-e_s)$ and $(\alpha,\alpha+e_s)$;
if $b'=a_s$, then they are $(\alpha,\alpha-e_s)$, $(\alpha,\alpha+e_{s+1})$.
So the vertices (\ref{otf}) are the vertices of a tiling. 

To see that this tiling has no downward flip, it is enough to check that its fundamental forest $\Gamma$ is empty.
There are at most two edges pointing up from any internal vertex (\ref{otf}): those pointing in the directions 
$e_r$ ($e_{r-1}$ if $b=a_r$) and $e_s$ ($e_{s+1}$ if $b'=a_s$ or $r=s$).
So by Lemma~\ref{ars}, we have $\Gamma=\emptyset$.
\end{proof}

Let $\XX_1$ be the graph whose vertices are the tilings of $P$, and whose edges are the flips.
As a direct consequence of Corollary~\ref{cht}, we obtain the following result.

\begin{Proposition}\label{cK}\dontshow{cK}
The graph $\XX_1$ is connected. \qed
\end{Proposition}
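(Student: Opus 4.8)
The plan is to deduce connectivity of $\XX_1$ directly from Corollary~\ref{cht}. First I would recall that Corollary~\ref{cht} tells us that, starting from any tiling $T$ and performing downward flips in any order, the process terminates at the unique tiling $\Tmin$ (this combines Lemma~\ref{ncb}, which guarantees termination, with Proposition~\ref{Kei}, which identifies the unique terminal tiling). In particular, for every tiling $T$ there is a path in $\XX_1$ from $T$ to $\Tmin$, obtained by performing downward flips until none remain available.

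Given this, connectivity follows by a standard argument: let $T$ and $T'$ be arbitrary tilings. By the preceding paragraph there is a path $\gamma$ in $\XX_1$ from $T$ to $\Tmin$ and a path $\gamma'$ from $T'$ to $\Tmin$. Since flips are symmetric (a downward flip reversed is an upward flip, and $\XX_1$ is an undirected graph), the path $\gamma'$ can be traversed backwards to give a path from $\Tmin$ to $T'$. Concatenating $\gamma$ with the reverse of $\gamma'$ yields a path in $\XX_1$ from $T$ to $T'$. Hence any two vertices of $\XX_1$ are joined by a path, so $\XX_1$ is connected.

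There is essentially no obstacle here: the real work has already been done in establishing Lemma~\ref{ncb}, Proposition~\ref{Kei}, and Corollary~\ref{cht}. The only point requiring the slightest care is the observation that a path of downward flips from $T$ to $\Tmin$ genuinely exists — that is, that one cannot get "stuck" at a non-minimal tiling — but this is precisely the content of Proposition~\ref{Kei} (no downward flip can be performed only at $\Tmin$) together with Corollary~\ref{cht}. Once that is in hand, the proof is the one-line concatenation argument above, which is why the paper simply writes \qed after the statement.
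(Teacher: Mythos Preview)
Your proposal is correct and matches the paper's approach exactly: the paper states the proposition as ``a direct consequence of Corollary~\ref{cht}'' and writes \qed, and your argument spells out precisely that consequence (every tiling reaches $\Tmin$ by downward flips, so concatenate two such paths).
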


Let $\XX_2$ be the cellular complex obtained by gluing the following squares and octagons onto $\XX_1$:
 
{\it i)}
If $T\rightsquigarrow T'$ and $T\rightsquigarrow T''$ are flips involving disjoint sets of rhombi, 
we can perform the two of them simultaneously to get a fourth tiling $T'''$.
The vertices $T,T',T'',T'''$ form a 4-cycle in $\XX_1$ on which we attach a square.

{\it ii)} 
Suppose that $\sigma$ is a four-cell of $C$, and that $T$ a tiling containing a tiling of $\pi(\sigma)$.
Then $T$ contains one of the figures in (\ref{figoc}) as a subset and we may perform the corresponding cycle of eight flips. 
In each such case, we glue an octagon with boundary this series of eight flips.

\begin{Proposition} \label{Key} \dontshow{Key} 
The cell complex $\XX_2$ is simply connected.
\end{Proposition}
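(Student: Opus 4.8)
The plan is to build a discrete Morse-type argument on $\XX_1$ using the function $\phi(T) = \sum_{I \in \vert(T)} \varphi(I)$ from Lemma~\ref{ncb} as a height function, and to contract an arbitrary loop in $\XX_2$ down to the basepoint $\Tmin$. Since $\XX_2$ is connected (Proposition~\ref{cK}), it suffices to show that $\pi_1(\XX_2, \Tmin)$ is trivial. First I would reduce to a normal form: using Corollary~\ref{cht}, from any tiling $T$ there is a canonical-up-to-homotopy ``descending path'' $\delta_T$ of downward flips leading to $\Tmin$; given an edge $T \rightsquigarrow T'$ of $\XX_1$, the loop $\delta_T \cdot (T \rightsquigarrow T') \cdot \delta_{T'}^{-1}$ generates $\pi_1(\XX_1,\Tmin)$, so it is enough to show every such generating loop bounds in $\XX_2$. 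Equivalently, I would show that any two descending galleries from a fixed tiling $T$ to $\Tmin$ are homotopic in $\XX_2$, and that this suffices.

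The heart of the matter is a local confluence statement: if $T$ admits two distinct downward flips, say at leaves $J_1 \neq J_2$ of its fundamental forest $\Gamma$ (Corollary~\ref{sov}), then the two resulting descending paths can be made to converge using only the relations encoded by the squares and octagons of $\XX_2$. I would split into cases according to how the two cubes $c_1, c_2$ underlying the flips interact. If $c_1$ and $c_2$ involve disjoint sets of rhombi, the flips commute and the square of type (i) closes the diamond directly. If they overlap, the overlap is governed by a common $4$-face $\sigma$ of $C$: the two flips at $J_1$ and $J_2$ are consecutive flips in the octagonal cycle of type (ii) associated to $\pi(\sigma)$, and going around the octagon the ``long way'' versus the ``short way'' exhibits the required homotopy. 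The combinatorial input here is that any two downward flips available at a single tiling, when not independent, must sit inside a single $4$-dimensional face --- this follows by examining the edges pointing up and down at the vertices of $c_1 \cap c_2$ and invoking Lemma~\ref{ars}, since the leaves of $\Gamma$ near a shared rhombus force the six-vertex local configuration of the octagon picture~(\ref{figoc}).

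Granting local confluence, I would run a standard diamond/Newman-type argument: order loops by the maximum value of $\phi$ attained, and use local confluence plus Lemma~\ref{ncb} (which guarantees descending paths terminate) to push any loop down in $\phi$-height until it is homotopic, rel $\XX_2$, to a loop entirely at the bottom, i.e.\ the constant loop at $\Tmin$. A mild subtlety is that a loop in $\XX_1$ need not consist solely of downward flips; but each upward flip $T \rightsquigarrow T'$ is the reverse of a downward flip, so at a local maximum of $\phi$ along the loop we see two downward flips emanating from the same tiling $T$ (the reverses of the two edges adjacent to that maximum), and local confluence applies to excise the peak. Iterating strictly decreases the number of $\phi$-maximal vertices (or their common height), so the process terminates.

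The main obstacle I anticipate is the case analysis in the local confluence step: verifying that two overlapping downward flips genuinely lie in a common $4$-face, and that the pair of partial descending paths one gets matches two consecutive arcs of a single octagon from~(\ref{figoc}). This requires a careful hands-on analysis of rhombus configurations --- essentially showing that if two trivalent ``valley'' vertices of $T$ are close enough that their flip-cubes share a rhombus, the local picture is forced to be (a piece of) the octagon. Lemmas~\ref{ars} and~\ref{lor}, together with the explicit combinatorics of the fundamental forest, should make this tractable, but it is where all the real work lies; the global argument collapsing the loop is then formal.
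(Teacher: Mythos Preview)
Your overall architecture is the same as the paper's: use $\phi$ as a height function, attach to each tiling a descending gallery to $\Tmin$, and kill loops by excising $\phi$-maxima via a local confluence argument that produces either a square or an octagon $2$-cell. The gap is in the overlapping case of your local confluence step. You assert that whenever two downward flips at leaves $J_1,J_2$ of $\Gamma$ share a rhombus, the two $3$-cubes sit inside a common $4$-face of $C$ \emph{and} the tiling $T$ contains one of the eight octagon pictures of~(\ref{figoc}). The first part is fine, but the second need not hold. Concretely, in the configuration~(\ref{figov}) the two valleys $I,I'$ sit above a common vertex $B$; for the octagon to be present in $T$ one needs $B$ to have exactly two edges pointing down. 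Nothing in Lemmas~\ref{ars} or~\ref{lor} forces this: $B$ can perfectly well have three or more down-edges, in which case the sixth rhombus of the would-be octagon is not in $T$ and no octagonal $2$-cell is available at that spot. So the statement ``any two overlapping downward flips lie inside a single octagon of $T$'' is false as written, and your diamond argument stalls.

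The paper closes this gap not by proving confluence for \emph{arbitrary} pairs of downward flips, but by choosing the descending paths carefully: at each step one flips at a leaf of the fundamental forest with \emph{smallest $y$-coordinate}. With this rule, when you compare an arbitrary first flip $\beta$ (coming from the edge of the loop) to the canonical first flip $\beta'$, the vertex $I'$ at which $\beta'$ is performed is a leaf of $\Gamma$ of minimal height. Now if the common lower vertex $B$ had three or more down-edges, Lemma~\ref{ars} would produce an edge of $\Gamma$ pointing down from $B$, hence a leaf of $\Gamma$ strictly below $I'$, contradicting minimality. Thus $B$ has exactly two down-edges and the octagon is genuinely present. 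In short: your plan works once you replace ``canonical-up-to-homotopy descending path'' by the specific smallest-$y$-leaf rule and use that minimality, via Lemma~\ref{ars}, to force the octagon in the overlapping case; without that choice the key combinatorial claim fails.
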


\begin{proof}
Let $\phi$ be the functional defined in the proof of Lemma~\ref{ncb}.
Recall that for upward flips $T\rightsquigarrow T'$, we have $\phi(T')=\phi(T)+1$, and inversely for downward flips.
Let $\gamma=(T_1,T_2,\ldots,T_m=T_1)$ be a loop in $\XX_1$.
We'll prove by induction on $\max(\phi(T_i))$ that $\gamma$ is nullhomotopic in $\XX_2$. Let $i$ be an index at which $\phi(T_i)$ is maximized.

Let $T_{i,0}:=T_i$, and inductively define $T_{i,j}$ by the following procedure.
First pick a leaf of the fundamental forest of $T_{i,j-1}$ with smallest possible $y$-coordinate.
Then let $T_{i,j}$ be the tiling obtained by performing a downward flip at that vertex.
By Corollary~\ref{cht}, this process terminates at $\Tmin$.
Let $\gamma_i=(T_{i,0},T_{i,1},\ldots)$ be the resulting path, and let $\alpha_i$ denote the edge $(T_i,T_{i+1})$ of $\XX_1$.
If $D_i$ are discs in $\XX_2$ filling the loops $\alpha_i\gamma_{i+1}\gamma_i^{-1}$, then $\cup D_i$ bounds $\gamma$.
So we have reduced ourselves to showing that $\alpha_i\gamma_{i+1}\gamma_i^{-1}$ is nullhomotopic.

If $\alpha_i$ is a downward flip, let $\delta:=\alpha_i\gamma_{i+1}$, $\delta':=\gamma_i$, $S:=T_i$, otherwise let $\delta:=\alpha_i^{-1}\gamma_i$, $\delta':=\gamma_{i+1}$, $S:=T_{i+1}$.
Note that $\delta$, $\delta'$ are paths from $S$ to $\Tmin$ and that they consist entirely of downward flips.
Let $\Gamma$ be the fundamental forest of $S$.
Up to reparametrization, the loop $\delta^{-1}\delta'$ is equal to $\alpha_i\gamma_{i+1}\gamma_i^{-1}$. 
So we need to show that $\delta^{-1}\delta'$ is nullhomotopic.

Write $\delta=(S,S_1,S_2,\ldots, S_r=\Tmin)$, $\delta'=(S,S'_1,S'_2,\ldots, S_r=\Tmin)$, and
let $\beta$, $\beta'$ denote the flips $S\rightsquigarrow S_1$, $S\rightsquigarrow S'_1$ respectively.
The sequence of flips in $\delta^{-1}\delta'$ looks like this: 
\[
\Tmin
\rightsquigarrow\,\,
\ldots \,\,{\rightsquigarrow}
\,S_2\,{\rightsquigarrow}\,S_1\,
\stackrel{\beta^{-1}}{\rightsquigarrow}
\,S\,
\stackrel{\beta'}{\rightsquigarrow}
\,S'_1\,{\rightsquigarrow}\,S'_2\,{\rightsquigarrow}\,\,\ldots
\,\,\rightsquigarrow
\Tmin
\]
Let $I,I'\in S$ be the vertices at which $\beta$ and $\beta'$ are performed.
We know by construction that the vertex $I'$ is a leaf of $\Gamma$ with smallest $y$-coordinate.

If $\beta$ and $\beta'$ involve disjoint sets of rhombi, we can replace $\beta^{-1},\beta'$ by the other two sides of the 4-cycle {\it (i)}. 
This produces a loop whose maximum value of $\phi$ is $r-1$, and we're done by induction.
Otherwise, the supports of $\beta$ and $\beta'$ overlap, and $S$ therefore looks like this: \dontshow{figov}
\begin{equation}\label{figov}
\psfrag{I}{$I$}
\psfrag{Ip}{$I'$}
\begin{matrix}\psfig{file=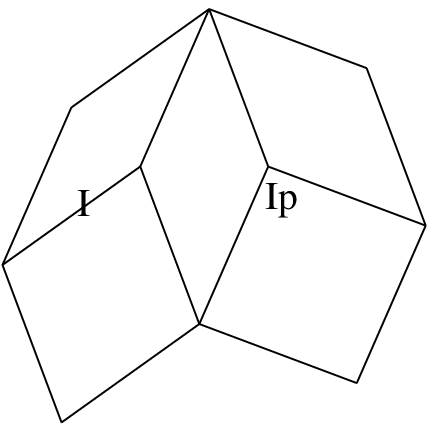, width=2cm}\end{matrix}
\end{equation}
The vertex just below $I$ and $I'$ has only two edges pointing down from it, since
otherwise by Lemma~\ref{ars}, it would have edges of $\Gamma$ pointing down from it, contradicting the minimality of $I'$.
So figure (\ref{figov}) fits into an octagon similar to the one in the upper left of (\ref{figoc}). 
The flips $\beta^{-1}$ and $\beta'$ are two of the flips in the 8-cycle (\ref{figoc}). Replace them by the six remaining flips.
This produces a loop whose maximum value of $\phi$ is $r-1$, and we're again done by induction.
\end{proof}

We will need the following technical variant of Proposition~\ref{cK}.

\begin{Proposition} \label{Qus} \dontshow{Qus}
Let $T$, $T'$ be two tilings of $P$, and let $I_0 \in \Pi$ be a vertex contained in both $T$ and $T'$.
Then there is a sequence of flips going from $T$ to $T'$, so that all intermediate tilings contain $I_0$.
\end{Proposition}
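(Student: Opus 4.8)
The plan is to mimic the strategy used for Proposition~\ref{Key}, but carried out inside the subcomplex of tilings that contain the fixed vertex $I_0$. First I would observe that the vertices of $P$ which contain $I_0$ can be cut into two pieces by the ``staircase'' through $I_0$: writing $I_0=(i_1,\dots,i_n)$, the tilings containing $I_0$ are exactly those in which the rhombi are split between a sub-zonogon $P^-$ lying below $I_0$ (spanned, in each coordinate direction $r$, by the $i_r$ pseudolines passing below the origin in the construction of Lemma~\ref{LA}) and a complementary sub-zonogon $P^+$ lying above $I_0$. A tiling of $P$ containing $I_0$ is the same thing as a pair consisting of a tiling of $P^-$ and a tiling of $P^+$, and a flip at a vertex $J\neq I_0$ takes place entirely within one of the two pieces. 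Since $I_0$ is always a vertex of both pieces, any such flip preserves the property of containing $I_0$.

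With this reduction in hand, the claim becomes: the graph whose vertices are tilings of $P^-$ (resp.\ $P^+$), with flips as edges, is connected; then one simply concatenates a path from $T|_{P^-}$ to $T'|_{P^-}$ (leaving the $P^+$ part fixed) with a path from $T|_{P^+}$ to $T'|_{P^+}$ (leaving the $P^-$ part fixed). But $P^-$ and $P^+$ are themselves zonogons of the type considered in Section~\ref{S11} — $P^-$ has edge data $(i_1,\dots,i_n)$ and $P^+$ has edge data $(a_1-i_1,\dots,a_n-i_n)$, after discarding coordinates where the relevant value is $0$ — so connectedness of their flip graphs is exactly Proposition~\ref{cK} applied to smaller instances. (One must check the mild degenerate cases where $P^-$ or $P^+$ collapses to a point or a segment, where there is nothing to prove, and where $n<3$ after discarding zero coordinates, in which case the flip graph is a single point.) This gives the desired sequence of flips through tilings all containing $I_0$.

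The one technical point that needs care — and which I expect to be the main obstacle — is verifying the geometric decomposition cleanly: that a tiling $T$ contains $I_0$ if and only if its rhombi partition into a tiling of a lower sub-zonogon $P^-_T$ and an upper one, that the underlying polygons $P^-_T$, $P^+_T$ are independent of $T$ (so that the decomposition is a genuine product of tiling spaces), and that $P^-$ and $P^+$ are honest zonogons with the claimed edge data. The tool for this is Lemma~\ref{lDm} together with the pseudoline-arrangement picture of Lemma~\ref{LA}: in any arrangement $\cL$ realizing a tiling that contains $I_0$, the region containing the origin is a single cell, and the pseudolines split into those passing below that cell and those passing above it; the lower ones cut out $P^-$ and the upper ones cut out $P^+$. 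Actually, the cleanest way to phrase the whole argument may be to avoid pseudolines and work directly: $I_0$ is a vertex of $T$, so the $2$-cells of $T$ lying in the ``lower'' region $\{y < y(I_0)\}\cup(\text{cells weakly below})$ versus the ``upper'' region are determined, and each is a rhombus tiling of a zonogon whose boundary consists of part of $\partial P$ together with the monotone lattice path through $I_0$. Once this is set up, everything else is a direct appeal to Proposition~\ref{cK}.
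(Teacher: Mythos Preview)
Your decomposition is incorrect, and the error is precisely at the ``one technical point'' you flag. It is not true that a tiling containing $I_0$ splits as a tiling of a fixed sub-zonogon $P^-$ with edge data $(i_1,\dots,i_n)$ together with a tiling of a complementary $P^+$ with edge data $(a_1-i_1,\dots,a_n-i_n)$. A rhombus count already shows this: $P^-$ carries $\sum_{r<s} i_r i_s$ rhombi and $P^+$ carries $\sum_{r<s}(a_r-i_r)(a_s-i_s)$, and these fall short of the $\sum_{r<s} a_r a_s$ rhombi of $P$ by
\[
\sum_{r<s}\bigl(i_r(a_s-i_s)+i_s(a_r-i_r)\bigr).
\]
In pseudoline language, the missing rhombi are exactly the crossings between a pseudoline passing below $I_0$ and one passing above it; such a crossing belongs to neither $P^-$ nor $P^+$, and which side of $I_0$ it occurs on genuinely depends on the tiling. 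Concretely: for $n=4$, $A=(1,1,1,1)$ and $I_0=(0,1,1,0)$, your $P^-$ and $P^+$ are each a single rhombus, so your argument predicts a unique tiling containing $I_0$; but both $T_{\min}$ and the tiling obtained from it by the upward flip at $(0,1,0,0)$ contain $(0,1,1,0)$. There is no monotone path through $I_0$ that is independent of the tiling, and the reduction to two smaller instances of Proposition~\ref{cK} collapses.

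The paper's proof is considerably more intricate and does not attempt any such splitting. It first uses downward flips (avoiding $I_0$) to reduce to the subset $\mathcal T_0$ of tilings whose only possible downward flip is at $I_0$ itself; for such $T$ the fundamental forest is a single path from $I_0$ up to a vertex $I'$ depending only on $I_0$, so that elements of $\mathcal T_0$ are encoded by sequences of edge-directions. The main work is then to show that a transposition of two adjacent directions in this sequence can be realised by an explicit chain of flips along two adjacent zones of $T$, none of which touches $I_0$.
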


\begin{proof}
Let $\mathcal T$ be the set of tilings containing $I_0$, and let $\mathcal T_0\subset\mathcal T$ be the 
subset consisting of those tilings on which no downward flip can be done, 
except possibly at the vertex $I_0$.
By Lemma~\ref{ncb}, any tiling in $\mathcal T$ can be joined to one in $\mathcal T_0$ by a sequence of downward flips in such a way that all intermediate tilings contain $I_0$. 
We may therefore assume that $T$ and $T'$ are in $\mathcal T_0$.

Let $T\in\mathcal T_0$ be a tiling.
The set of vertices on which downward flips are possible is either empty or equal to $\{I_0\}$.
If it is empty, let $I':=I_0$.
Otherwise, we know by Corollary~\ref{sov} that $I_0$ is the unique leaf of $\Gamma$.
It follows that $\Gamma$ is just a path.
Let $I'$ then denote the root of $\Gamma$.

\vspace{.3cm}
\centerline{
\psfrag{IO}{$I_0$}
\psfrag{II}{$I'$}
\epsfig{file=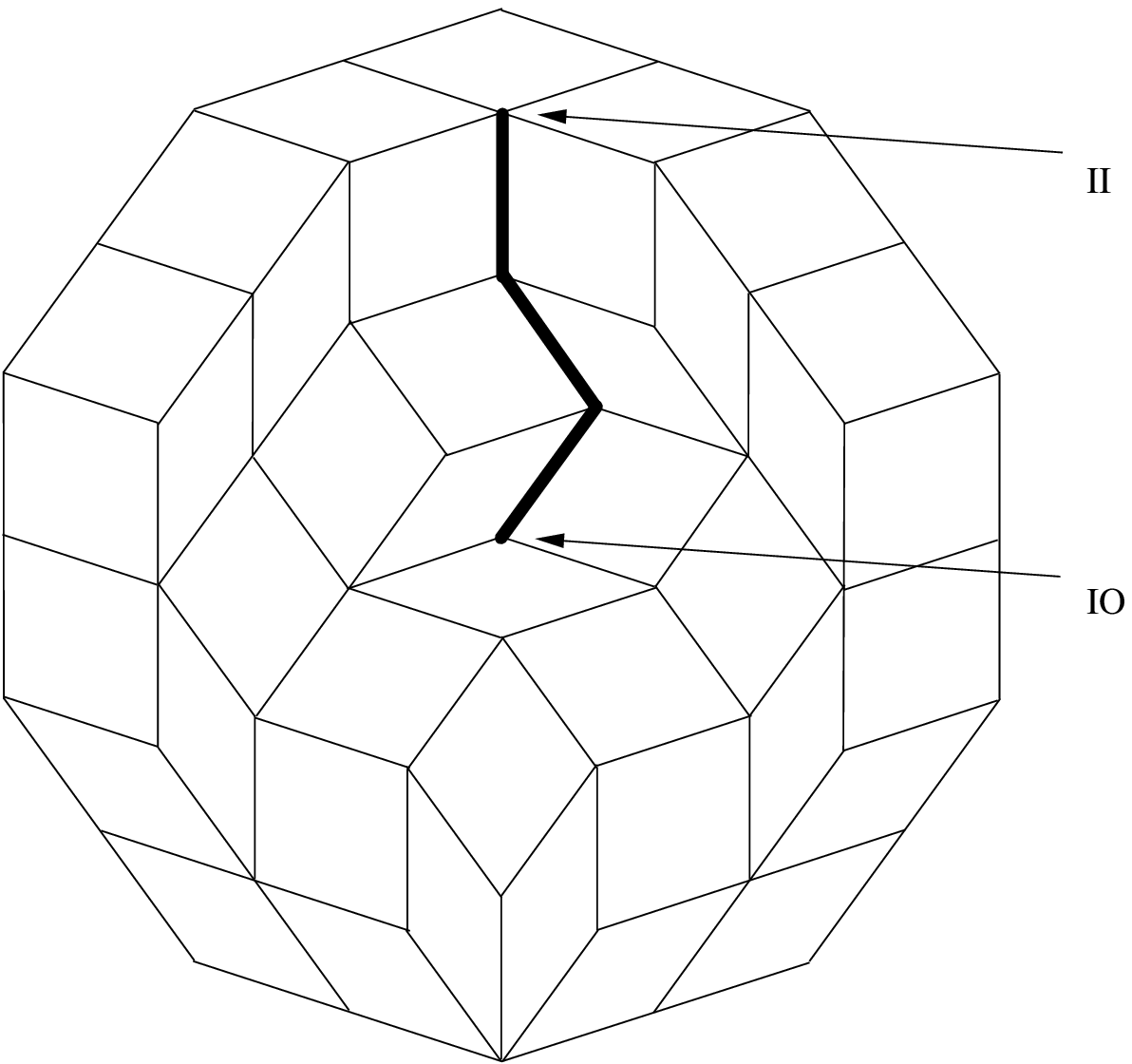, width=4cm}\quad \raisebox{15pt}{An example of $T\in\mathcal T_0$.}}
\vspace{.3cm}

We claim that $I'$ only depends on $I_0$, and not on the particular tiling $T\in\mathcal T_0$. 
To see this, write $I_0=(0,\ldots,0,i_r,i_{r+1},\ldots, i_s,0,\ldots,0)$, 
where $r$ and $s$ are the first and last nonzero coordinates.
The fundamental forest of $\Tmin$ being empty,
it follows from Lemma~\ref{unT} that $T$ and $\Tmin$ are identical above $I'$.

Indeed, if $T$ and $\Tmin$ were not identical above $I'$, the vertex $\tilde I\in P_{\not =}$ with highest $y$-coordinate would be higher than $I'$. By Lemma~\ref{unT}, we would have $\lk(\tilde I,\emptyset)\not =\lk(\tilde I,\Gamma)$, which is impossible since both links are empty.

The above argument shows in particular that $I'$ is a vertex of $\Tmin$. 
So by Proposition~\ref{OTF}, it is the form
\begin{equation}\label{hwo}
I'=(0,\ldots,0,b,a_{\tilde r+1},\ldots,a_{\tilde s-1},b',0,\ldots,0).
\end{equation}
Since $\Gamma$ is a path, we know by Lemma~\ref{ars} that every vertex $I\in\Gamma$, $I\not=I_0$, has exactly three edges pointing down from it: 
two outer edges which are not in $\Gamma$, and one edge in the middle that belongs to $\Gamma$.
Clearly, the directions of the two outer ones do not depend on the particular vertex of $\Gamma$.
Let us call those directions $-e_{\hat r}$ and $-e_{\hat s}$.
The two edges pointing down from $I_0$ also point in the directions $-e_{\hat r}$ and $-e_{\hat s}$.
Since $I_0-e_{\hat r}$ and $I_0-e_{\hat s}$ are elements of $\Pi$, we must have $r\le \hat r$ and $\hat s\le s$ 
(in fact, we have $\hat r=r$ and $\hat s= s$, but this fact will not be needed here).
The edges of $\Gamma$ thus point in directions $-e_j$ for various $j\in[\hat r+1,\hat s-1]\subset [r+1,s-1]$
and, in particular,
the first and last non-zero coordinates of $I_0$ and $I'$ agree.
In equation (\ref{hwo}), we therefore have $\tilde r=r$, $\tilde s=s$, $b=i_r$, and $b'=i_s$,
from which we get 
\[
I'=(0,\ldots,0,i_r,a_{r+1},\ldots, a_{s-1}, i_s,0,\ldots,0),
\]
which is indeed a formula independent of $T$.

Let $I_0, I_1, \ldots, I_m=I'$ denote the set of vertices of $\Gamma$, ordered from the leaf to the root.
We then have $I_n=I_{n-1}+e_{j_n}$ for various $j_n\in[r+1,s-1]$,
and the sequence $J=(j_1, j_2,\ldots)$ contains each element $e_j$ exactly $a_j-i_j$ times.
Let \dontshow{Jpr}
\begin{equation}\label{Jpr}
J'=(j_1,\ldots,j_{n-1},j_{n+1},j_n, j_{n+2},\ldots)
\end{equation}
be another such sequence, obtained from $J$ by a transposition.
We show that $J'$ corresponds to a tiling $T'\in\mathcal T_0$, 
and that $T'$ can be obtained from $T$ by a sequence of flips in such a way that all intermediate tilings contain $I_0$.

Let $R_0$ be the rhombus to the left of the edge $(I_n,I_{n+1})$ and $R_1$ be the one to its right.
Inductively define $R_i$ to be the rhombus adjacent to $R_{i-1}$, opposite from $R_{i-2}$.
If $i<0$, we let $R_i$ be the rhombus adjacent to $R_{i+1}$, opposite from $R_{i+2}$.
Let $\mathcal R=\bigcup R_i$. 
Similarly, let $\mathcal S=\bigcup S_i$ be the chain of rhombi\footnote{These chains of rhombi corresponds to pseudolines in the pseudoline arrangement for $T$.} 
corresponding to the edge $(I_{n-1},I_n)$.
Since $\mathcal R$ and $\mathcal S$ link different pairs of opposite edges of the polygon $P$, there exists a rhombus $V\subset\mathcal R\cap \mathcal S$.
Let us assume that $V$ lies on the left side of $\Gamma$ (the other case being symmetric).

\vspace{.3cm}
\centerline{
\psfrag{R}{$V$}
\psfrag{G}{$\Gamma$}
\psfrag{cR}{$\mathcal R$}
\psfrag{cS}{$\mathcal S$}
\epsfig{file=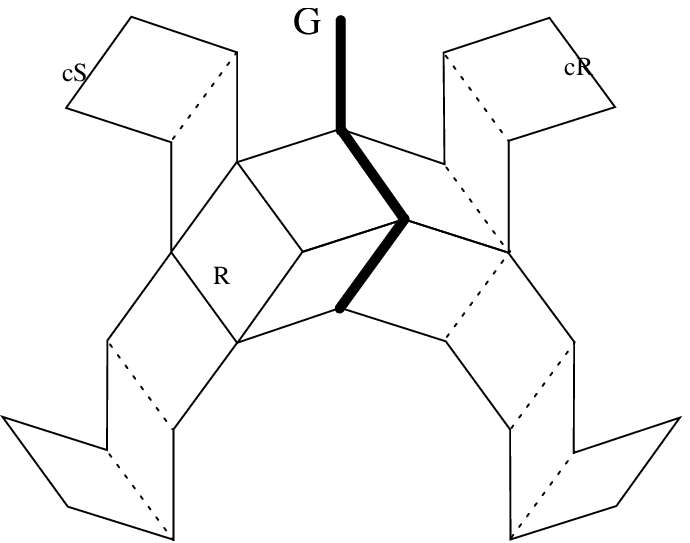, width=4cm}}
\vspace{.3cm}

\noindent
We then have $V=R_a=S_b$ for some $a,b\le 0$.
Let $\alpha_a$ be the edge between $V$ and $S_{b+1}$. 
For $i>a$, define inductively $\alpha_i$ to be the edge of $R_i$ which is in the boundary of $\mathcal R$,
and which is adjacent to $\alpha_{i-1}$.
Let also $X_i$ be the vertex between $\alpha_i$ and $\alpha_{i+1}$.
For $a<i\le 0$, the edges $\alpha_i$ point up from $X_{i-1}$, 
as otherwise the edge between $R_{i-1}$ and $R_i$ would be the only one pointing up from $X_{i-1}$ and thus be in $\Gamma$.
Since $(I_n,I_{n+1})$ is in $\Gamma$, it is the only edge pointing up from $X_0=I_n$, and therefore $\alpha_1$ points down from it.
Let $c>0$ be the biggest number such that $\alpha_i$ points down from $X_{i-1}$ for all $1\le i\le c$.
By induction on $i$, we see that $S_i$ touches $R_i$ for all $0<i\le c$ since otherwise there would be too many edges pointing down from $X_{i-1}$, thus contradicting Lemma~\ref{ars}.
Similarly, $S_i$ touches $R_i$ for all $a<i\le 0$ since otherwise there would be too many edges pointing down from $X_i$ (in particular $a=b$). 
The following figure illustrates the preceding definitions:

\begin{equation*}
\begin{matrix}
\psfrag{V}{$V$}
\psfrag{rm2}{$\scriptstyle R_{\text{-}\!2}$}
\psfrag{rm1}{$\scriptstyle R_{\text{-}\!1}$}
\psfrag{r0} {$\scriptstyle R_0$}
\psfrag{r1} {$\scriptstyle R_1$}
\psfrag{r2} {$\scriptstyle R_2$}
\psfrag{r3} {$\scriptstyle R_3$}
\psfrag{r4} {$\scriptstyle R_4$}
\psfrag{sm2}{$\scriptstyle S_{\text{-}\!2}$}
\psfrag{sm1}{$\scriptstyle S_{\text{-}\!1}$}
\psfrag{s0} {$\scriptstyle S_0$}
\psfrag{s1} {$\scriptstyle S_1$}
\psfrag{s2} {$\scriptstyle S_2$}
\psfrag{s3} {$\scriptstyle S_3$}
\psfrag{xm3}{$\scriptstyle X_{\text{-}3}$}
\psfrag{xm2}{$\scriptstyle X_{\text{-}\!2}$}
\psfrag{xm1}{$\scriptstyle X_{\text{-}\!1}$}
\psfrag{x0} {$\scriptstyle X_0$}
\psfrag{x1} {$\scriptstyle X_1$}
\psfrag{x2} {$\scriptstyle X_2$}
\psfrag{x3} {$\scriptstyle X_3$}
\psfrag{am3}{$\scriptstyle \alpha_{\text{-}3}$}
\psfrag{am2}{$\scriptstyle \alpha_{\text{-}\!2}$}
\psfrag{am1}{$\scriptstyle \alpha_{\text{-}\!1}$}
\psfrag{a0} {$\scriptstyle \alpha_0$}
\psfrag{a1} {$\scriptstyle \alpha_1$}
\psfrag{a2} {$\scriptstyle \alpha_2$}
\psfrag{a3} {$\scriptstyle \alpha_3$}
\epsfig{file=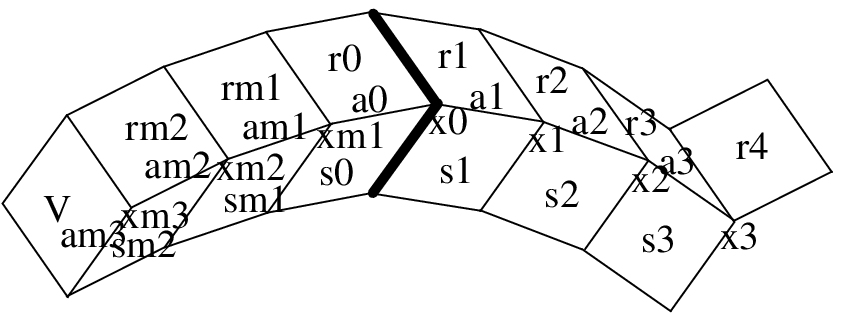, width=8cm} 
\end{matrix} 
\end{equation*}

\noindent
In this figure, $a=b=-3$, and $c=3$.

Since $S_i$ and $R_i$ are adjacent for $a<i\le c$, we can perform flips at the vertices $X_a$, $X_{a+1}$, $\ldots$ , $X_{c-1}$.
Let $T'$ be resulting tiling and $\Gamma'$ its fundamental forest.
It is easy to see that $\Gamma'$ is the path obtained from $\Gamma$ by exchanging the edges $(I_{n-1},I_n)$ and $(I_n,I_{n+1})$.
So $T'$ is the desired tiling corresponding to (\ref{Jpr}). 
Moreover, it is constructed from $T$ by a sequence of flips such that all intermediate tilings contain $I_0$.

\vspace{.3cm}
\centerline{
\psfrag{s}{$\rightsquigarrow$}
\psfrag{t}{$T=$}
\psfrag{tp}{$=T'$}
\psfrag{g}{$\Gamma$}
\psfrag{gp}{$\Gamma'$}
\psfrag{sds}{$\rightsquigarrow\ldots\rightsquigarrow$}
\epsfig{file=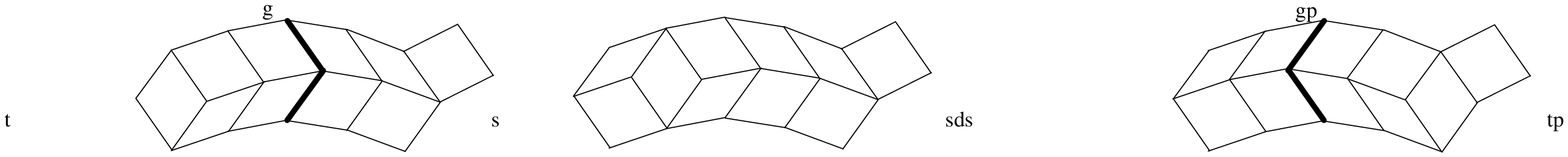, width=12cm}}
\vspace{.3cm}

Suppose now that we have $T, T'\in\mathcal T_0$,
and let $J,J'$ be the corresponding sequences, as in (\ref{Jpr}).
Since $J$ is a permutation of $J'$, we may pick intermediate sequences $J=J_0,J_1,\ldots,J_t=J'$, such that each $J_i$ differs from $J_{i-1}$ by a transposition.
By the above argument, we have tilings $T_i\in\mathcal T_0$ associated to $J_i$, and each $T_i$ is obtained from $T_{i-1}$ 
by a sequence of flips such that all intermediate tilings contain $I_0$.
The last tiling $T_t$ has the same fundamental forest as $T'$, so by Corollary~\ref{to7}, we have $T_t=T'$.
Putting all this together, we have produced a way of going from $T$ to $T'$ by a sequence of flips such that all intermediate tilings contain $I_0$.
\end{proof}

\section{Transcendence Bases and Laurentness\dontshow{s:L}}\label{s:L}

Let $T_0$ be a tiling of $P$, and let $x_J$, $J\in\vert(T_0)$, be formal variables.
By Theorem~\ref{h:i}, we know that we can label the rest of $\Pi\setminus\vert(T_0)$ with elements $x_I\in\QQ(\{x_J\}_{J\in\vert(T_0)})$,
in such a way that all the equations (\ref{trc}) are satisfied.

\begin{Proposition}\label{yo}\dontshow{yo} 
Let $T_0$ be a tiling of $P$
and let $I\in \Pi$ be a point.
Then $x_I$ is a Laurent polynomial in the variables $\{x_J\}_{J\in \vert (T_0)}$.
\end{Proposition}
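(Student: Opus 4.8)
The plan is to prove the Laurent phenomenon by exploiting the fundamental forest and the construction of $\Tmin$, mimicking the standard ``upper bound = lower bound'' style of argument used for cluster algebras, but adapted to the octagon/square structure that replaces mutation here. The key observation is that by Theorem~\ref{h:i} the value $x_I$ is well-defined as an element of $\QQ(\{x_J\}_{J\in\vert(T_0)})$, so it suffices to show it lies in the Laurent polynomial ring $\cL:=\ZZ[x_J^{\pm 1}\colon J\in\vert(T_0)]$ (or rather its extension over $\QQ$). I would first reduce to the case $T_0=\Tmin$: indeed, if the statement holds when the ``initial'' tiling is $\Tmin$, then for a general $T_0$ one notes that every variable $x_J$ with $J\in\vert(\Tmin)$ is, by the same token applied in the other direction, a Laurent polynomial in the $\{x_{J'}\}_{J'\in\vert(T_0)}$ — this uses that the transition from $T_0$ down to $\Tmin$ via downward flips (Corollary~\ref{cht}) is a birational map whose inverse is again built from the recurrence — and then one composes. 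So the crux is: \emph{for $I\in\Pi$ arbitrary, $x_I$ is a Laurent polynomial in $\{x_J\}_{J\in\vert(\Tmin)}$.}

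For the $\Tmin$ case I would argue by induction on the ``height'' statistic $\varphi(I)=\sum i_k$ used in Lemma~\ref{ncb}, building up $x_I$ for vertices $I\notin\vert(\Tmin)$ in order of increasing $\varphi$. Given such an $I$, by Lemma~\ref{LA} pick a tiling $T$ containing $I$, and by Proposition~\ref{Qus} connect $T$ to $\Tmin$ through tilings all containing $I$ — wait, that is the wrong direction; instead I would take the first moment, reading a path of downward flips from $T$ to $\Tmin$ (Corollary~\ref{cht}), at which $I$ is created: that is, find a flip $S\rightsquigarrow S'$ performed at the vertex $I$, with $S'$ closer to $\Tmin$. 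Then equation (\ref{trc}) expresses $x_I$ as $(x_A x_B + x_C x_D + x_E x_F)/x_G$ where $G$ and the other six indices are vertices of $S'$ (equivalently of $S$ minus $I$) that are strictly lower than $I$ in the sense that suffices for induction — here one must check the bookkeeping that all seven of $A,\dots,G$ either lie in $\vert(\Tmin)$ or were already handled. The division by $x_G$ is what forces a Laurent polynomial rather than a polynomial; the genuine content is that the \emph{numerator} is divisible, as a polynomial, only by the monomials we are allowed to invert. This is exactly where the octagon relation and the explicit $3$-term nature of (\ref{trc}) must be used: one shows that the two ways of resolving a vertex (the two ``diagonals'' of the relevant rhombus pattern) produce numerators sharing no common factor with $x_G$, via the same elementary computation displayed in the proof of Theorem~\ref{tWt}.

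The main obstacle I anticipate is precisely the divisibility/coprimality step: showing that when $x_I$ is computed along two different flip sequences the resulting Laurent expressions agree \emph{and} that no spurious denominators accumulate. In the cluster-algebra world this is handled by the ``Laurent phenomenon'' machinery of Fomin--Zelevinsky (caterpillar lemmas), and the honest way to do it here is to verify the analogue of their coprimality condition for each local move — the square move (disjoint flips) is trivial since the two operations commute and touch disjoint variables, while the octagon move requires checking that consecutive terms in the $8$-cycle, written over a common denominator, stay Laurent; the displayed computation in the proof of Theorem~\ref{tWt} already exhibits the needed cancellations (e.g. $q=i$, $r=j$, $s=k$), and I would abstract from it the statement that a single flip, applied to a Laurent monomial-coprime configuration, yields another such configuration. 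Granting that lemma, the induction closes: every $x_I$ is obtained from $\{x_J\}_{J\in\vert(\Tmin)}$ by a finite sequence of flips each preserving Laurentness, hence $x_I\in\cL$, and transporting back to the arbitrary initial tiling $T_0$ via the first paragraph completes the proof.
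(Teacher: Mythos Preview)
Your proposal has a genuine gap at the very first step. The reduction to $T_0=\Tmin$ does not work: if $x_I$ is a Laurent polynomial in $\{x_J\}_{J\in\vert(\Tmin)}$ and each such $x_J$ is a Laurent polynomial in $\{x_{J'}\}_{J'\in\vert(T_0)}$, it does \emph{not} follow that $x_I$ is Laurent in the $x_{J'}$. Substituting a Laurent polynomial into a negative power yields a rational function, not a Laurent polynomial (concretely: $x_J^{-1}$ with $x_J = x_{J'_1} + x_{J'_2}$ gives $(x_{J'_1}+x_{J'_2})^{-1}$). This non-transitivity is precisely what makes the Laurent phenomenon nontrivial; one must prove Laurentness directly from each initial tiling, not transport it from a preferred one.

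The second gap is in the inductive step itself. You write $x_I = (x_A x_B + x_C x_D + x_E x_F)/x_G$ with the seven other vertices already known, by induction, to be Laurent in the initial variables. But then you need the Laurent polynomial $x_G$ to \emph{divide} the numerator in the Laurent ring, and your discussion of ``coprimality'' points the wrong way (if numerator and $x_G$ were coprime and $x_G$ non-monomial, the quotient would \emph{fail} to be Laurent). The octagon consistency computation in Theorem~\ref{tWt} shows the recurrence is well-defined; it does not supply this divisibility. What is actually required is the Fomin--Zelevinsky caterpillar lemma, which you name but do not apply: one must assign exchange polynomials $p(e)$ to \emph{every} vertex of every intermediate tiling (not just the trivalent ones where flips occur), and then verify their three conditions --- in particular Condition~3, a specific algebraic identity relating $p(e_1)$, $p(e_2)|_{x_i=0}$, and $p(e_3)$ along each short subchain. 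The paper carries this out, the nontrivial ingredients being the definition~(\ref{EdgeLabel}) of $p(e)$ at vertices of arbitrary valence and the case analysis for Condition~3.
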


\begin{proof}
%
%

We apply Theorem 2.1 of~\cite{FZ} (known as the caterpillar lemma), in a manner very
similar to Fomin and Zelevinsky's proof of the case $n=3$~\cite[Theorem 1.2]{FZ}. 
Our notation matches~\cite{FZ} as much as possible.

Pick a tiling $T'$ containing $I$, and a sequence of flips 
\[
T_0\rightsquigarrow T_1\rightsquigarrow\,\,\ldots\,\,\rightsquigarrow T_N=T'
\]
going from $T_0$ to $T'$. 
Let $V:=\vert(T_0)$.
For each $k$, place the vertices of $T_k$ in bijection with the vertices of $T_{k+1}$ as follows. 
For each $k$, there is exactly one vertex of $T_k$ which is not a vertex of $T_{k+1}$ and vice versa. 
Pair these two vertices with each other and let every vertex which is in both $T_k$ and $T_{k+1}$ be paired with itself. 
Thus, the vertices of each $T_i$ are naturally labeled by $V$. 
We now create the ``caterpillar tree", which we denote $\Cater$. 
It contains a path with $N+1$ vertices, which we will denote $t_0$, \dots, $t_N$. 
Every edge $e$ of $\Cater$ is labeled with an element $v(e)$ of $V$; specifically, 
the edge between $t_k$ and $t_{k+1}$ is labeled with the element of $V$ corresponding to the vertex of $T_{k}$ not in $T_{k+1}$. 
Also, $\Cater$ has $2(|V|-1)+(N-1)(|V|-2)$ additional vertices, each of degree one. 
There are $|V|-1$ of these additional vertices adjacent to $t_0$ and $t_N$ and $|V|-2$ adjacent to each other $t_i$. 
The edges to these additional vertices are labeled with elements of $V$ in such a way 
that each element of $V$ occurs exactly once among the labels of the edges adjoining each $t_i$.

\begin{gather}
\psfrag{t0}{$t_0$}
\psfrag{t1}{$t_1$}
\psfrag{t2}{$t_2$}
\psfrag{t3}{$t_3$}
\psfrag{tn-1}{$t_{N-1}$}
\psfrag{tn}{$t_N$}
\psfrag{dots}{$\scriptstyle .\,\,.\,\,.$}
\psfrag{v-2}{\put(0,8){\rotatebox{90}{$\Big\{$}}$\scriptstyle|V|-1$}
\psfrag{v-1}{\put(1,8){\rotatebox{90}{$\Big\{$}}$\scriptstyle|V|-2$}
\nonumber\psfig{file=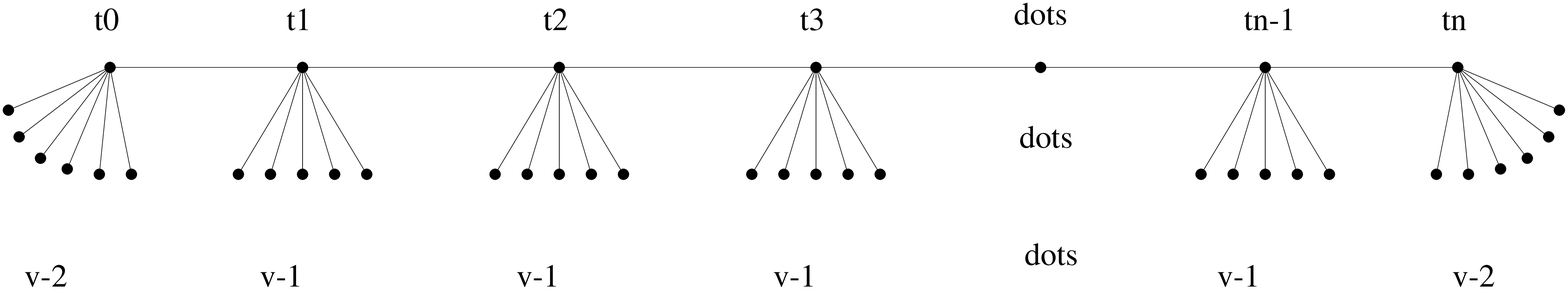, width=13cm}\\
\nonumber\text{The caterpillar tree $\Cater$.}
\end{gather}

Consider the polynomial ring $\ZZ[x_v]_{v \in V}$, whose variables are indexed by the elements $v$ of $V$. 
To each edge $e$ of $\Cater$, we assign an element $p(e)$ of that ring as follows. 
At least one of the endpoints of $e$ is of the form $t_i$; 
if both are, choose one arbitrarily. 
Now, $v(e)$ corresponds to a vertex of the tiling $T_i$ of $P$. 
Let $a_1, \dots, a_r,a_{r+1}=a_1$, be the neighbors of $v(e)$ in this tiling, written in cyclic order.
For $j=1\,.\,.\,r$, let $b_{j}$ be the fourth vertex of the rhombus containing $v(e)$, $a_j$, and $a_{j+1}$.
If $v(e)\in \partial P$, then one of the $b_j$ will fail to exist: in that case, let\footnote{In the main part of our argument, where we check Condition~3, it would suffice to take any value whatsoever for $x_{b_j}$ in the case where $v(e) \in \partial(P)$, as long as our choice only depended on $v(e)$. However, in our check of Conditions~1 and~2, we need to make sure that the polynomials $p(e)$ don't acquire any spurious factors that would interfere with the argument.  The expression $x_{a_j} x_{a_{j+1}}$ was the simplest choice we could find which avoided this issue, there are doubtless many others.} $x_{b_j}:=x_{a_j} x_{a_{j+1}}$.

%
%
We now define $p(e)\in \ZZ[x_v]$ by the formula
\begin{equation}
p(e) := x_{a_1} x_{a_2} \cdots x_{a_r} \left( \frac{x_{b_1}}{x_{a_1} x_{a_2}} + \frac{x_{b_2}}{x_{a_2} x_{a_3}} + \cdots + \frac{x_{b_r}}{x_{a_r} x_{a_1}} \right) \label{EdgeLabel},
\end{equation}
and note that, if both ends of $e$ are of the form $t_i$, then $p(e)$ does not depend on which end we picked. 
Carrying out the cube recurrence consists of traveling from $t_0$ to $t_N$ and
replacing the variable $x_{v(e)}$ by the expression $p(e)(x_1, \ldots, x_{|V|})/x_{v(e)}$ each time we travel across an edge $e$.
According to Theorem~2.1 of~\cite{FZ}, the resulting rational expressions will be Laurent polynomials in the $x_v$ provided we check the three conditions below.

\textbf{Condition 1:} For every edge $e$, the polynomial $p(e)$ does not depend on $x_{v(e)}$ and is not divisible by $x_v$ for any $v \in V$. This is clear by inspection. 

\textbf{Condition 2:} Let $e$ and $e'$ be two edges of $\Cater$ bordering the same vertex $t_i$. 
Then $p(e)$ and $p(e')|_{v(e)=0}$ are relatively prime in $\ZZ[x_v]$. 
In fact, both of these polynomials are irreducible.
Indeed, their Newton polytopes are simplices which are not integral multiples of smaller simplices, 
and thus cannot be expressed as nontrivial Minkowski sums. 
The various Newton polytopes are not equal to each other, so the polynomials are relatively prime.

\textbf{Condition 3:} Consider a four vertex sub-chain of $\Cater$ with edges $e_1$, $e_2$, and $e_3$, and let us assume that $v(e_1)=v(e_3)$. 
We adopt the shorthand $P=p(e_1)$, $Q=p(e_2)$, $R=p(e_3)$, $i=v(e_1)=v(e_3)$ and $j=v(e_2)$. 
Let $t_k$ and $t_{k+1}$ be the endpoints of $e_2$. 
The condition is that $R|_{x_j \leftarrow \frac{Q_0}{x_j}}$ is of the form $L Q_0^m P$ where $L$ is a Laurent monomial coprime to $P$, $Q_0$ is $Q|_{x_i=0}$ and $b$ is a nonnegative integer. 
Here $R|_{x \leftarrow g(x,y, \ldots)}$ denotes the result of replacing $x$ by $g(x,y,\ldots)$ in the expression for $R$. 
There are three cases:

\textbf{Case 1:}
$i$ and $j$ are not in the same rhombus. In this case, $x_j$ does not occur in $R$ and $R=P$ so the condition is trivially satisfied with $L=1$ and $m=0$.

\textbf{Case 2:}
$i$ and $j$ are joined by an edge in $T_k$. This implies that $i$ and $j$ are diagonally opposite vertices of the same rhombus in $T_{k+1}$ and that, in both tilings, $j$ has degree $3$. Let $a_1$, \dots, $a_r$ be the neighbors of $i$ in $T_k$ other than $j$. Let the rhombi of $T_k$ containing $i$ be $(i,j,c, a_1)$, $(i, a_1, b_1, a_2)$, $(i,a_2,b_2,a_3)$, \dots, $(i,a_{r-1}, b_{r-1}, a_r)$, $(i,a_r,e,j)$, and let $(j,e,d,c)$ be the remaining rhombus of $T_k$ containing $j$
(if $i\in\partial P$, then one of the above rhombi doesn't actually exist, but this doesn't affect the rest of the argument).

\[
\psfrag{c}{$\scriptstyle c$}
\psfrag{d}{$\scriptstyle d$}
\psfrag{e}{$\scriptstyle e$}
\psfrag{i}{$\scriptstyle i$}
\psfrag{j}{$\scriptstyle j$}
\psfrag{a1}{$\scriptstyle a_1$}
\psfrag{a2}{$\scriptstyle a_2$}
\psfrag{a3}{$\scriptstyle a_3$}
\psfrag{a4}{$\scriptstyle a_4$}
\psfrag{a5}{$\scriptstyle a_5$}
\psfrag{a6}{$\scriptstyle a_6$}
\psfrag{b1}{$\scriptstyle b_1$}
\psfrag{b2}{$\scriptstyle b_2$}
\psfrag{b3}{$\scriptstyle b_3$}
\psfrag{b4}{$\scriptstyle b_4$}
\psfrag{b5}{$\scriptstyle b_5$}
\psfrag{Tk}{The tiling $T_{k}$}
\psfrag{Tk+1}{The tiling $T_{k+1}$}
\psfig{file=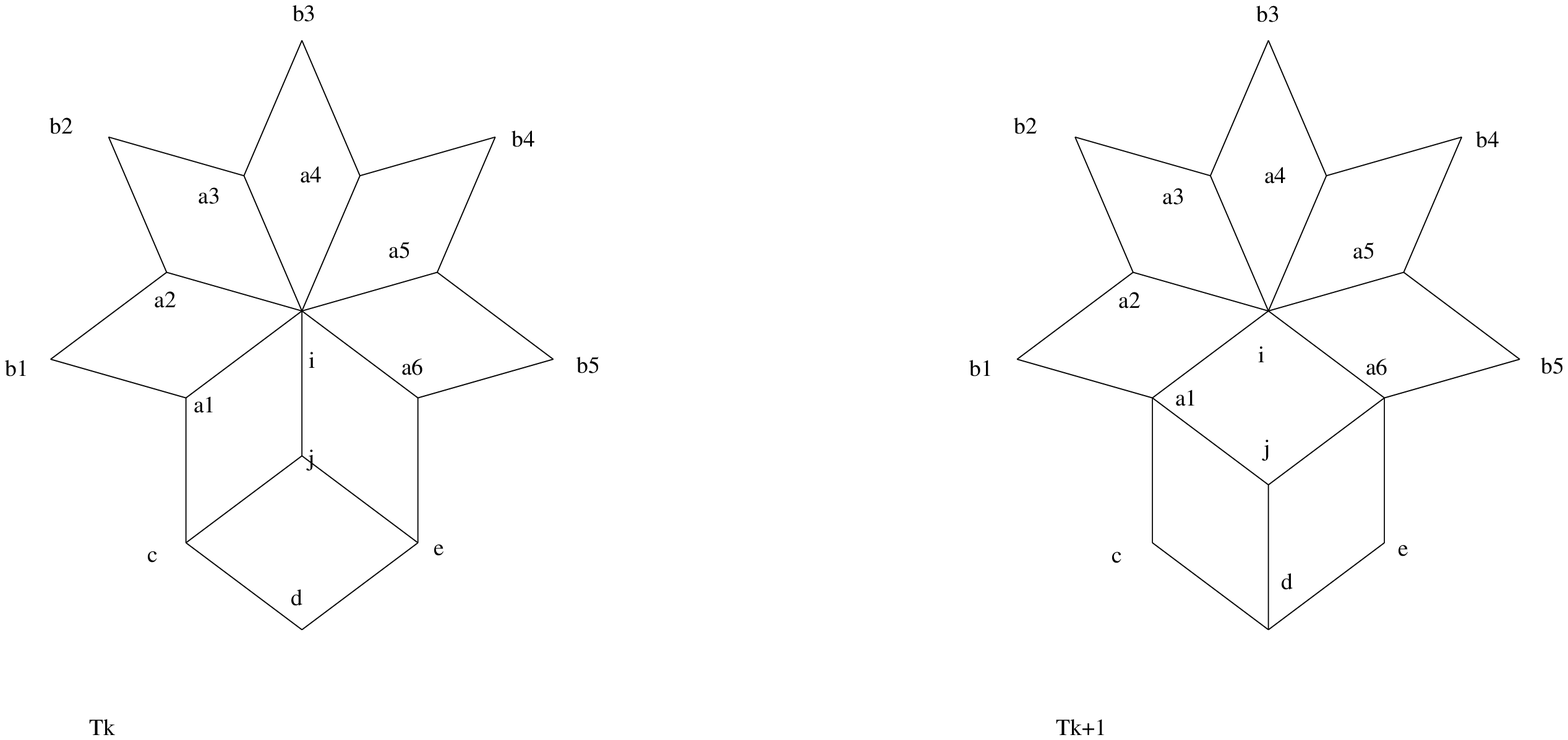, width=9cm}
\]


Then we have
\begin{equation}
\begin{split}
P\hspace{.13cm}   &=\, x_{a_1} \cdots x_{a_r} x_j \left( \frac{x_c}{x_{j} x_{a_1}} + \frac{x_e}{x_{a_r} x_{j}} + U \right) \\
Q\hspace{.13cm}   &=\, x_i x_d + x_c x_{a_r} + x_e {x_{a_1}}_{\phantom{\big |}} \\ 
Q_0              &=\, x_c x_{a_r} + x_e x_{a_1} \\
R\hspace{.13cm}   &=\, x_{a_1} \cdots x_{a_r} \left( \frac{x_j}{x_{a_1} x_{a_r}} + U \right)
\end{split}
\end{equation}
where $U=\sum_{j=1}^{r-1} \frac{b_j}{a_j a_{j+1}}$ is independent of $x_j$. Using the identity
\[
\frac{x_c}{x_j x_{a_1}} \,+\, \frac{x_e}{x_{a_r} x_j}\, =\, \frac{\hspace{5mm}\frac{x_c x_{a_r} + x_e x_{a_1}}{x_j}\hspace{5mm}}{x_{a_1}x_{a_r}}\,\,,
\]
we find that the required identity holds with $m=0$ and $L=x_j^{-1}$. 

\textbf{Case 3:}
$i$ and $j$ are diagonally opposite members of the same rhombus in $T_{k}$. This is like case $2$ with the roles of $T_k$ and $T_{k+1}$ switched. We have
\begin{equation}
\begin{split}
P\hspace{.13cm} &=\, x_{a_1} \cdots x_{a_r} \left( \frac{x_j}{x_{a_1} x_{a_r}} + U \right) \\
Q\hspace{.13cm} &=\, x_i x_d + x_c x_{a_r} + x_e {x_{a_1}}_{\phantom{\big |}} \\
Q_0 &=\, x_c x_{a_r} + x_e x_{a_1} \\
R\hspace{.13cm} &=\, x_{a_1} \cdots x_{a_r} x_j \left( \frac{x_c}{x_{j} x_{a_1}} + \frac{x_e}{x_{a_r} x_{j}} + U \right)
\end{split}
\end{equation}
where $U$ is the same as in the previous case. Using the identity
\[
\frac{x_c}{\hspace{3mm}\frac{x_c x_{a_r} + x_e x_{a_1}}{x_j}\, x_{a_1}\hspace{0mm}} \,+\, \frac{x_e}{\hspace{2mm}x_{a_r} \frac{x_c x_{a_r} + x_e x_{a_1}}{x_j}\hspace{2mm}}\,=\,\frac{x_j}{x_{a_1} x_{a_r}}\,,
\] 
we see that the required identity holds with $m=1$ and $L=1$. 
\end{proof}

\begin{lemma}\label{fw}\dontshow{fw}
Let $T$ be a tiling of $P$.
Then $T$ has $\sum_{ i < j } a_i a_j+\sum_i a_i+1$ vertices
\end{lemma}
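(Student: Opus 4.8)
The plan is to reduce the statement to a computation for a single tiling. A flip $T\rightsquigarrow T'$ has $\vert(T')=(\vert(T)\setminus\{J\})\cup\{J'\}$, so it removes exactly one vertex and adds exactly one vertex; hence $|\vert(T)|$ is unchanged under flips. Since Proposition~\ref{cK} tells us that the graph $\XX_1$ of flips is connected, the number $|\vert(T)|$ is the same for every tiling $T$, and it suffices to evaluate it on $\Tmin$.

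For $\Tmin$ I would invoke Proposition~\ref{OTF}, which lists the vertices explicitly: the zero vector, together with the vectors \eqref{otf} indexed by $1\le r\le s\le n$, $0<b\le a_r$, $0<b'\le a_s$. I would split these into three groups. The zero vector contributes $1$. The vectors with $r=s$ are the ``axis'' vertices $b\,e_r$ with $1\le b\le a_r$; for each $r$ there are $a_r$ of them, contributing $\sum_i a_i$ in total. The vectors with $r<s$ are determined by the pair $(r,s)$ together with the two \emph{independent} choices $b\in\{1,\dots,a_r\}$ and $b'\in\{1,\dots,a_s\}$ (the intermediate coordinates $a_{r+1},\dots,a_{s-1}$ being forced to their maximal values), so this group contributes $\sum_{i<j}a_ia_j$. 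Adding the three contributions gives $\sum_{i<j}a_ia_j+\sum_i a_i+1$, as claimed. The only point requiring care is the bookkeeping at $r=s$, where the parametrization in \eqref{otf} degenerates and $b$, $b'$ must be identified; once that is handled the count is immediate.

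As a cross-check — and an alternative route that avoids $\Tmin$ — one can instead use Euler's formula for the disk $P$: writing $R$ for the number of rhombi and $E$ for the number of edges of the tiling, one has $V-E+R=1$. Every rhombus has four sides, internal edges lie in two rhombi, and the $2\sum_i a_i$ boundary edges lie in one, so $4R=2E-2\sum_i a_i$, i.e.\ $E=2R+\sum_i a_i$; substituting gives $V=R+\sum_i a_i+1$. It then remains to see that $R=\sum_{i<j}a_ia_j$, which again follows from flip invariance (a flip trades three rhombi for three rhombi) together with the value computed for $\Tmin$, or directly from the pseudoline-arrangement description, in which the rhombi of ``type $(i,j)$'' are exactly the crossings between the $a_i$ pseudolines of family $i$ and the $a_j$ pseudolines of family $j$. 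I expect the first approach to be cleanest; in either case the only genuine work is the elementary count above, so there is no serious obstacle here.
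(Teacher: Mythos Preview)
Your proposal is correct. Both routes you outline work, and in fact your second ``cross-check'' is exactly the argument the paper gives: it computes the number of rhombi as $\sum_{i<j}a_ia_j$ via the pseudoline picture, uses the incidence count $4F=2E-2\sum a_i$, and then applies Euler's formula $V-E+F=1$.

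Your primary approach is a genuinely different route. You use that flips preserve $|\vert(T)|$, invoke connectedness of $\XX_1$ (Proposition~\ref{cK}) to reduce to $\Tmin$, and then count the vertices listed in Proposition~\ref{OTF}. This is perfectly valid and logically non-circular, since both of those results are established in Section~\ref{fcj} before this lemma. The trade-off is that your argument leans on the heavier combinatorial machinery already developed (the fundamental forest, the characterization of $\Tmin$), whereas the paper's Euler-characteristic proof is self-contained and would go through even without any of Section~\ref{fcj}. On the other hand, your approach has the virtue of making the formula transparent: the three terms $1$, $\sum_i a_i$, and $\sum_{i<j}a_ia_j$ correspond visibly to the zero vector, the axis vertices ($r=s$), and the generic vertices ($r<s$) of $\Tmin$. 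Your remark about the degeneration at $r=s$ is the right thing to flag; once $b$ and $b'$ are identified there, the three groups are manifestly disjoint and the count is immediate.
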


\begin{proof}
Let $V, E, F$ denote the numbers of vertices, edges, and faces of $T$.
Under the correspondence between tilings and pseudo-line arrangements~\cite{Zieg}, 
the tilings of $P$ correspond to arrangements of $\sum a_i$ lines that come in $n$ families of ``parallel'' pseudolines, 
the $i^{\textrm{th}}$ family containing $a_i$ of them.
These pseudolines intersect in $\sum_{i<j} a_i a_j$ points.
Taking planar duals, we conclude that $F=\sum_{i<j} a_i a_j$.

Every rhombus has four edges, and every edge is contained in two rhombi except for the $2 \sum a_i$ boundary edges, 
which are contained in only one rhombus. 
It follows that $4F=2E-2 \sum a_i$. 
Since the Euler characteristic of $P$ is one, we have 
\begin{equation*}
V=E-F+1=(2F+\sum a_i)-F+1=F+\sum a_i+1=\sum_{ i < j } a_i a_j+\sum a_i+1.\\
\end{equation*} 
\end{proof}

We now can describe the basic geometry of the variety $X=X(A)$, defined in the introduction.

\begin{theorem}\label{irred}\dontshow{irred}
The scheme $\Yo=\Yo(A)\subset (\CC^\times)^{\Pi}$ cut out by (\ref{trc}) is an irreducible variety of dimension $\sum_{ i < j } a_i a_j+\sum_i a_i+1$.
If $T$ is a tiling of $P$, then $\{x_J\}_{J\in \vert(T)}$ is a transcendence basis for the coordinate ring of $\Yo$.  
\end{theorem}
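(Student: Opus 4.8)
The plan is to identify $\Yo$ explicitly with a Zariski-open subset of an algebraic torus, using the Laurent expressions provided by Proposition~\ref{yo}. First I would fix a tiling $T$, write $V:=\vert(T)$, and let $L_I\in\CC[x_J^{\pm1}]_{J\in V}$ be the Laurent polynomial expressing $x_I$ in the variables $\{x_J\}_{J\in V}$ furnished by Proposition~\ref{yo} (so $L_J=x_J$ for $J\in V$). Applying Theorem~\ref{h:i} over the ordered field $\QQ(\{x_J\}_{J\in V})$, ordered via an embedding into $\RR$ sending the $x_J$ to algebraically independent positive reals as in Section~\ref{scz}, the labeling $I\mapsto L_I$ satisfies every instance of (\ref{trc}); since a Laurent polynomial ring embeds in its fraction field, each such instance becomes an identity in $\CC[x_J^{\pm1}]_{J\in V}$ (after base change from $\QQ$ to $\CC$). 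Writing $L_I=f_I/m_I$ with $f_I$ a polynomial and $m_I$ a monomial, I would set $U$ to be the principal open subset of $(\CC^\times)^{V}$ where $\prod_{I\in\Pi}f_I\ne0$, i.e.\ $U=\{y\in(\CC^\times)^V:L_I(y)\ne0\text{ for all }I\in\Pi\}$.

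Next I would show $\Yo\cong U$ as schemes. Restriction of coordinates gives a morphism $p\colon\Yo\to(\CC^\times)^{V}$, and $y\mapsto(L_I(y))_{I\in\Pi}$ gives a morphism $U\to(\CC^\times)^{\Pi}$ which factors through $\Yo$ because the $L_I$ obey (\ref{trc}) identically; call the latter $\iota$. One composite is immediate: $p\circ\iota=\mathrm{id}_U$ since $L_J=x_J$ on $V$. For $\iota\circ p=\mathrm{id}_{\Yo}$ I must check that every $z\in\Yo$ satisfies $z_I=L_I(z|_V)$ for all $I$, which also shows $p$ lands in $U$. To see this, use Lemma~\ref{LA} and Proposition~\ref{cK} to pick a walk of flips $T=T_0\rightsquigarrow T_1\rightsquigarrow\cdots\rightsquigarrow T_M$ in $\XX_1$ passing through a tiling that contains each vertex of $\Pi$; at each flip $T_k\rightsquigarrow T_{k+1}$, relation (\ref{trc}) expresses the unique new coordinate as a ratio whose denominator is a coordinate of $z$, hence a unit, so $z|_{\vert(T_{k+1})}$ is obtained from $z|_{\vert(T_k)}$ by one step of the recurrence. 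Iterating down the walk yields $z_I=L_I(z|_V)$, where the well-definedness clause of Theorem~\ref{tWt} (again over $\QQ(\{x_J\}_{J\in V})$) guarantees the $L_I$ do not depend on the chosen walk. Thus $p$ and $\iota$ are mutually inverse, $\Yo\cong U$, and in particular $\Yo$ is reduced.

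The conclusions then follow quickly. The torus $(\CC^\times)^{V}$ is an irreducible variety, so its nonempty open subscheme $U$ is an irreducible variety of dimension $|V|$; here $U\ne\varnothing$ because feeding positive real numbers on $V$ into the recurrence keeps all values positive (solving (\ref{trc}) for the pivot gives a ratio of positive quantities), hence produces a point of $\Yo\subseteq(\CC^\times)^{\Pi}$ whose $p$-image lies in $U$. By Lemma~\ref{fw}, $|V|=\sum_{i<j}a_ia_j+\sum_i a_i+1$, which gives the asserted irreducibility and dimension. Finally $\OO(\Yo)\cong\OO(U)=\CC[x_J^{\pm1}]_{J\in V}\big[(\prod_I f_I)^{-1}\big]$ is a localization of a Laurent polynomial ring, so its fraction field is $\CC(\{x_J\}_{J\in V})$, a purely transcendental extension of $\CC$ on the algebraically independent family $\{x_J\}_{J\in V}$; since every $x_I$ equals $L_I$ and so already lies in this ring, $\{x_J\}_{J\in\vert(T)}$ is a transcendence basis for the coordinate ring of $\Yo$.

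The main obstacle I anticipate is verifying $\iota\circ p=\mathrm{id}_{\Yo}$, i.e.\ that an arbitrary complex solution $z$ of (\ref{trc}) is rigidly determined by its restriction to $\vert(T)$. Two small points need care: exhibiting a single walk of flips that meets a tiling through every vertex of $\Pi$, so that no coordinate of $z$ is left unconstrained, and observing that each recurrence step divides only by a coordinate of $z$ — legitimate precisely because $\Yo$ is taken inside the torus $(\CC^\times)^{\Pi}$ rather than in affine space, so every coordinate is invertible. Everything downstream of the isomorphism $\Yo\cong U$ — irreducibility, the dimension count via Lemma~\ref{fw}, and the transcendence basis — is then formal.
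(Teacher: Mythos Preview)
Your proposal is correct and follows essentially the same route as the paper: fix a tiling $T$, use the Laurent polynomials from Proposition~\ref{yo} to build mutually inverse morphisms between $\Yo$ and the open subset $U\subset(\CC^\times)^{\vert(T)}$ where none of those Laurent polynomials vanish, then read off irreducibility, dimension (via Lemma~\ref{fw}), and the transcendence basis from the isomorphism $\Yo\cong U$. You are somewhat more explicit than the paper about why $\iota\circ p=\mathrm{id}_{\Yo}$ (walking through flips and noting each division is by a unit of $(\CC^\times)^\Pi$) and about why $U\ne\varnothing$, but these are exactly the points the paper's terse ``by definition of $f_T^I$, we have $x_I=f_T^I(x_J)$ on $\Yo$'' is implicitly invoking.
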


\begin{proof}
Fix a tiling $T$, and let
$f_T^I(x_J)\in\QQ[x_J^{\pm1}]_{J\in\vert(T)}$ denote the Laurent polynomials expressing $x_I$ in terms of the variables $\{x_J\}_{J\in\vert(T)}$.
The existence of those Laurent polynomials is guaranteed by Proposition~\ref{yo}.
Let 
\[
U:=\Spec\,\Big(\!\big(\CC[x_J^{\pm1}]_{J\in\vert(T)}\big)\,[f_T^I(x_J)^{-1}]_{I\in\Pi}\Big)
\]
be the open subvariety of $(\CC^\times)^{\vert(T)}$ obtained by removing the hypersurfaces $\{f_T^I=0\}$, for all $I \in \Pi$. 

We claim that 
the projection $p:(\CC^\times)^\Pi\to(\CC^\times)^{\vert(T)}$ induces an isomorphism of $\Yo\simeq U$.
Indeed, the map
\[
\begin{split}
q:\big(\CC^\times\big)^{\vert(T)}\longrightarrow&\hspace{.4cm} \CC^\Pi\\
(x_J)\hspace{.4cm}\mapsto\,\,& \big(f_T^I(x_J)\big)_{I\in\Pi}
\end{split}
\]
maps $U$ into $(\CC^\times)^\Pi$, and the composite $p\circ q|_U$ is the inclusion of $U$ into $(\CC^\times)^{\vert(T)}$.
By Theorem~\ref{h:i}, the Laurent polynomials $f_T^I(x_J)$ satisfy the equations (\ref{trc}) defining $\Yo$.
It follows that $q(U)\subset \Yo$. 
We now show that $p(\Yo)\subset U$.
By definition of $f_T^I$, we have $x_I=f_T^I(x_J)$ on $\Yo$.
Since $\Yo\subset (\CC^\times)^\Pi$, the variables $x_I$ only take invertible values on $\Yo$.
It follows that $p(\Yo)$ doesn't hit the hypersurfaces $\{f_T^I=0\}$.
Finally, we check that $q\circ p|_{\put(1.5,6.3){$\scriptscriptstyle \circ$}Y}$ is the inclusion of $\Yo$ into $\CC^\Pi$.
This is indeed the case since, on $\Yo$, the coordinates $\{x_I\}_{I\in\Pi\setminus\vert(T)}$ are entirely determined by 
the formulas $x_I=f_T^I(x_J)$, which agrees with the definition of $q$.

So we have shown that $\Yo$ is isomorphic to $U$. 
As $U$ is an open subvariety of $(\CC^\times)^{\vert(T)}$, it is an irreducible variety of dimension $\# T$.
By Lemma~\ref{fw}, $\# T=\sum_{ i < j } a_i a_j+\sum_i a_i+1$, so $U$ is irreducible of this dimension.
The set $\{x_J\}_{J\in\vert(T)}$ is then clearly a transcendence basis for its coordinate ring.
%
%
\end{proof}

\begin{corollary}
$X(A)$ is an irreducible variety of dimension $\sum_{ i < j } a_i a_j+\sum_i a_i-1$. \qed
\end{corollary}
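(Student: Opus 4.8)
The plan is to read off both claims from Theorem~\ref{irred} by examining the quotient map defining $X$. Recall that $X$ is the closure of $\Xo=\Yo/(\CC^\times)^2$ inside $\PP(\CC^{\Pi^\even})\times\PP(\CC^{\Pi^\odd})$, and that Theorem~\ref{irred} tells us $\Yo$ is irreducible of dimension $\sum_{i<j}a_ia_j+\sum_i a_i+1$.

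First I would check that the assignment $(x_I)_{I\in\Pi}\mapsto\big(\,[x_I]_{I\in\Pi^\even}\,,\,[x_I]_{I\in\Pi^\odd}\,\big)$ is a well-defined morphism $\phi\colon\Yo\to\PP(\CC^{\Pi^\even})\times\PP(\CC^{\Pi^\odd})$: every point of $\Yo$ lies in $(\CC^\times)^\Pi$, so neither homogeneous coordinate vector is zero (here one only needs $\Pi$ to contain a point of each parity, which holds since $n\ge 3$ --- for instance $0$ is even and $e_1$ is odd). By construction the image of $\phi$ is exactly $\Xo$, with $X=\overline{\phi(\Yo)}$, and the fibers of $\phi$ are precisely the $(\CC^\times)^2$-orbits on $\Yo$. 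Since $\Yo$ is irreducible, $\phi(\Yo)$ is irreducible, and hence so is its closure $X$; this settles the irreducibility assertion.

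For the dimension I would first observe that the $(\CC^\times)^2$-action on $\Yo$ is free: if $(t,u)$ fixes a point $x\in\Yo$, then from $t\,x_I=x_I$ for even $I$ and $u\,x_I=x_I$ for odd $I$, together with $x_I\neq 0$ for all $I$, we get $t=u=1$. Consequently every orbit --- that is, every fiber of $\phi$ --- has dimension exactly $2$. Applying the theorem on dimensions of fibers to the dominant morphism $\phi\colon\Yo\to X$ (dominant by the definition of $X$ as the closure of $\phi(\Yo)$), the generic fiber has dimension $\dim\Yo-\dim X$; since that common fiber dimension is $2$, we conclude $\dim X=\dim\Yo-2=\sum_{i<j}a_ia_j+\sum_i a_i-1$.

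The argument is short, and I do not expect a genuine obstacle; the only points requiring a little care are verifying that $\Pi$ really contains points of both parities (so that $\phi$ is defined everywhere on $\Yo$) and that the action is free rather than merely having finite stabilizers (so that the fiber dimension is exactly $2$), both of which follow immediately from the nonvanishing of the coordinates on $\Yo$.
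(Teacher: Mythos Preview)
Your proof is correct and follows essentially the same route as the paper's: use Theorem~\ref{irred} to get irreducibility and dimension of $\Yo$, observe that the $(\CC^\times)^2$-action is free, and conclude that the quotient $\Xo$ (hence its closure $X$) is irreducible of dimension two less. You have simply filled in a few details the paper leaves implicit, such as verifying that $\Pi$ contains points of both parities and checking freeness explicitly.
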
 

\begin{proof}
We showed above that $\Yo(A)$ is irreducible of dimension $\sum_{ i < j } a_i a_j+\sum_i a_i+1$.
So $\Xo(A)$, the quotient of $\Yo(A)$ by a free action of $(\CC^{*})^2$, is irreducible of dimension $\sum_{ i < j } a_i a_j+\sum_i a_i-1$.
$X(A)$ is the closure of $\Xo(A)$, so it is also irreducible of dimension $\sum_{ i < j } a_i a_j+\sum_i a_i-1$.
\end{proof}

\section{The Isotropic Grassmannian}

In this section, we set $A=(1,1,\ldots,1)$.
Our goal is to prove that the variety $X$ defined in the introduction is then isomorphic to $\IG(n-1,2n)$,
the variety of isotropic $n-1$ planes in $\CC^{2n}$.
(For $V$ is a vector space with a symmetric, nondegenerate bilinear form $\langle, \rangle$, a subspace $W$ of $V$ is called isotropic if $\langle w_1, w_2 \rangle=0$ for all $w_1$ and $w_2 \in W$.) 
We remark that the main complexities of this section arise from keeping careful track of signs. 

It will be convenient to slightly modify our defining equations (\ref{trc}).
Let $\Yo'$ be the subvariety of $(\CC^\times)^{2^n}$ defined by \dontshow{trbi}
\begin{equation}\label{trbi}
  x_{I}\, x_{I+e_j+e_k+e_\ell} 
+ x_{I+e_j+e_\ell}\, x_{I+e_k}
= x_{I+e_j+e_k}\, x_{I+e_\ell} 
+ x_{I+e_k+e_\ell}\, x_{I+e_j} 
\end{equation}
for $j < k < \ell$.
As in the introduction, we let $\Xo'$ be the quotient
$\Yo'/(\CC^\times)^2$, and let
$X'\subset\CC\PP^{2^{n-1}-1}\times\CC\PP^{2^{n-1}-1}$ be its closure.

\begin{lemma}
$X'$ is isomorphic to $X$.
\end{lemma}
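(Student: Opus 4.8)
The plan is to exhibit an explicit monomial change of coordinates on the torus $(\CC^\times)^{2^n}$ that converts the system (\ref{trc}) into the system (\ref{trbi}), and then check that this change of coordinates descends through the $(\CC^\times)^2$-quotients and extends to the projective closures. Since both defining equations involve the same eight monomials $x_I x_{I+e_j+e_k+e_\ell}$, $x_{I+e_j+e_\ell}x_{I+e_k}$, $x_{I+e_j+e_k}x_{I+e_\ell}$, $x_{I+e_k+e_\ell}x_{I+e_j}$, the only difference being the placement of signs, the natural guess is a rescaling $x_I \mapsto \varepsilon_I x_I$ with $\varepsilon_I \in \{\pm 1\}$ depending only on $I \in \Pi = \{0,1\}^n$. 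First I would write out, for a fixed triple $j<k<\ell$ and fixed $I$, the four sign conditions that must hold: comparing (\ref{trc}) term-by-term with (\ref{trbi}), we need the products of $\varepsilon$'s over the four pairs to have a prescribed pattern of relative signs (three terms agreeing and one flipped, suitably). This is a linear system over $\FF_2$ in the exponents $\delta_I \in \FF_2$ defined by $\varepsilon_I = (-1)^{\delta_I}$.

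The key step is to solve that $\FF_2$-system uniformly in $I,j,k,\ell$. I expect the solution to be a quadratic function of the coordinates of $I$: something like $\delta_I = \sum_{p<q} i_p i_q \pmod 2$ where $I = (i_1,\dots,i_n)$, possibly with an additional linear correction term $\sum_p c_p i_p$. The verification is then a finite local computation: for the eight vertices of a single combinatorial cube one checks that $\sum_{p<q} i_p i_q$ changes in exactly the way needed to turn the ``$+ + +$ on the right'' form into the ``one term moved to the left'' form. Concretely, moving from $I$ to $I + e_j + e_\ell$ versus $I + e_k$ changes this quadratic form by controlled amounts ($i_j + i_\ell + 1$ and $i_k$ respectively, mod $2$, plus cross terms), and one tabulates the four relevant differences to see that exactly one sign flips relative to the others. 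I would present this as a short lemma-style computation rather than grinding all cases, noting the symmetry $j\leftrightarrow\ell$ that halves the work.

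Once the monomial automorphism $\phi:(\CC^\times)^{\Pi}\to(\CC^\times)^{\Pi}$, $\phi(x)_I = (-1)^{\delta_I} x_I$, is shown to carry $\Yo$ isomorphically onto $\Yo'$, the remaining steps are routine. Since $\delta_I$ depends only on $I$ and not on any continuous parameter, $\phi$ commutes with the $(\CC^\times)^2$-action that rescales even and odd coordinates (the sign function does not affect which coordinates are even or odd), so $\phi$ descends to an isomorphism $\Xo \cong \Xo'$ of the quotients. Finally, $\phi$ is induced by a linear automorphism of $\CC^{\Pi}$ which preserves the decomposition $\CC^{\Pi^{\even}}\oplus\CC^{\Pi^{\odd}}$, hence induces an automorphism of $\PP(\CC^{\Pi^{\even}})\times\PP(\CC^{\Pi^{\odd}})$ carrying $\overline{\Xo} = X$ onto $\overline{\Xo'} = X'$; taking closures commutes with this automorphism because it is an isomorphism of the ambient variety.

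The main obstacle will be pinning down the exact sign function $\delta_I$ and the bookkeeping of which of the four terms flips in (\ref{trc}) versus (\ref{trbi}) — in particular making sure the same $\delta$ works simultaneously for all triples $j<k<\ell$ and all positions $I$, rather than just for one cube. I would double-check this by verifying the cocycle-type consistency: the sign pattern forced by the triple $(j,k,\ell)$ at $I$ must agree with the one forced at a neighboring $I' = I + e_m$ for $m \notin \{j,k,\ell\}$, which is automatic once $\delta$ is a fixed quadratic form, but is the place where a naive ad hoc choice of signs would fail. Everything else is formal.
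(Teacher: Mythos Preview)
Your overall strategy---a diagonal sign change $x_I\mapsto\varepsilon_I x_I$ with $\varepsilon_I\in\{\pm1\}$---is exactly what the paper does, and your discussion of why this descends to the $(\CC^\times)^2$-quotient and extends to the projective closure is correct (indeed, more carefully stated than in the paper).

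However, your proposed sign function is wrong, and not fixably so within your ansatz. Write $m=\varphi(I)=\sum i_p$. Since all eight vertices of a cube have $\varphi$-values $m,m+1,m+1,m+1,m+2,m+2,m+2,m+3$, and the three ``mixed'' products in (\ref{trc}) and (\ref{trbi}) each pair an $(m+1)$-vertex with an $(m+2)$-vertex while the remaining product pairs the $m$- and $(m+3)$-vertices, the requirement is precisely
\[
\varepsilon_{(m)}\,\varepsilon_{(m+1)}\,\varepsilon_{(m+2)}\,\varepsilon_{(m+3)}=-1\qquad\text{for all }m,
\]
where $\varepsilon_{(m)}$ denotes the common sign at vertices with $\varphi=m$ (one checks easily that $\varepsilon$ must factor through $\varphi$ for the three mixed terms to acquire equal signs). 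Your guess $\delta_I=\sum_{p<q}i_pi_q=\binom{m}{2}$ gives the $4$-periodic pattern $0,0,1,1$, whose sum over any four consecutive values is even, so the product is $+1$, not $-1$. More generally, for \emph{any} quadratic-plus-linear $\delta$ in the $i_p$'s one computes that conditions (a)--(b) (equality of the three mixed sign-products) force $a_{jk}=a_{j\ell}=a_{k\ell}$ for all $j<k<\ell$, while condition (c) (the fourth product differs) forces $a_{jk}+a_{k\ell}\equiv1\pmod2$; these are incompatible. So no degree-$\le2$ choice of $\delta$ works.

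The paper's fix is simpler than anything quadratic: take $\varepsilon_I=-1$ if $4\mid\varphi(I)$ and $\varepsilon_I=+1$ otherwise. Among any four consecutive integers exactly one is divisible by $4$, so the product above is $-1$ as required. With this sign in hand, the rest of your argument goes through verbatim.
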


\begin{proof}
Let $\varphi(i_1,\ldots,i_n):=\sum i_j$.
The isomorphism is provided by the involution that
negates those variables $x_I$ for which 4 divides $\varphi(I)$.
\end{proof}

It will be useful to identify the two copies of $\CC\PP^{2^{n-1}-1}$ with the projectivizations of the spin representations of $\Spin(2n)$.
We begin with some standard background about the spin group.
Fix an $n$-dimensional vector space $W$ with a basis $\epsilon_1,\epsilon_2,\ldots, \epsilon_n$, and let $W^\vee$ be its dual, along with its dual basis $\epsilon_1^\vee,\ldots, \epsilon_n^\vee$.
Equip $V:=W\oplus W^\vee$ with the inner product
\[
\big\langle (w_1, w^{\vee}_1), (w_2, w^{\vee}_2) \big\rangle := {\textstyle\frac{1}{2}} \big( w_1^{\vee}(w_2) + w_2^{\vee}(w_1) \big).
\]
The Clifford algebra $\Cl(2n)$ is then defined to be the quotient of $\bigoplus_{k=0}^{\infty} V^{\otimes k}$ by $v \otimes v - \langle v, v \rangle$ or, 
equivalently, by $ v \otimes w + w \otimes v - 2 \langle v,w \rangle$. 
It decomposes as $\Cl(2n)^{\even} \oplus \Cl(2n)^{\odd}$, the images of $\bigoplus V^{\otimes 2k}$ and $\bigoplus V^{\otimes 2k+1}$ respectively. 
For $c  \in \Cl(2n)$, we write $(-1)^c$ to denote $1$ if $c$ is even and $-1$ if $c$ is odd.

If $v \in V$ has length $1$ then $v^2=1$ in $\Cl(2n)$ and, in particular, $v$ is a unit. 
We define $\Pin(2n)$ to be the subgroup of the unit group of $\Cl(2n)$ generated by such $v$. 
Every element of $\Pin(2n)$ is either odd or even; we write $\Spin(2n)$ for the subgroup of even elements. 
For $v \in V$ of length $1$, the map $\rho(v) : \Cl(2n)\to\Cl(2n)$ given by $x \mapsto -v xv$ takes $V$ to itself. 
Specifically, acts on $V$ by reflection in the hyperplane orthogonal to $v$. 
Thus, $\rho(\gamma) : x \mapsto (-1)^{\gamma} \gamma x \gamma^{-1}$ provides an action of $\Pin(2n)$ on $V$. 
Since this action is generated by orthogonal reflections, we obtain natural maps $\Pin(2n) \to O(2n)$ and $\Spin(2n) \to \SO(2n)$; these maps are double covers. 

Let $S:=\bwedge^{\bullet}(W)$ denote the exterior algebra of $W$. 
We make $S$ into a $\Cl(2n)$ module by defining 
\[
\begin{split}
u \cdot (w_1 \wedge \cdots \wedge w_k)&:=u \wedge w_1 \wedge \cdots \wedge w_k\hspace{1cm} \text{for}\quad u \in W\\
u^{\vee} \cdot (w_1 \wedge \cdots \wedge w_k) &:= u^\vee \inn (w_1 \wedge \cdots \wedge w_k) \quad \text{for}\quad u^\vee \in W^\vee,
\end{split}
\]
where
$u^\vee \inn (w_1 \wedge \cdots \wedge w_k)=\sum (-1)^{j-1} u^{\vee}(w_j)\left(  w_1 \wedge \cdots \wedge \widehat{w_j}  \wedge \cdots w_k \right)$, 
and the hat means that we omit the $j^{\textrm{th}}$ term. 
Then $S$ is a $\Spin(2n)$ representation by restriction to $\Spin(2n) \subset \Cl(V)$. 
As a $\Spin(2n)$ representation, it splits into two summands
\[
S_+:=\bwedge^{\even} W,\qquad\text{and}\qquad S_-:=\bwedge^{\odd} W. 
\]
Recall that in this section we have $\Pi=\{0,1\}^n$. 
For $J \in \Pi$, we let $v_J:=\epsilon_{j_1} \wedge \cdots \wedge \epsilon_{j_r}$, where $j_1 < \cdots < j_r$ are the indices for which $J_{j}=1$. 
Then $\{v_J\}_{J\in\Pi}$ forms a basis of $S$. 
Similarly, $\{v_J\}_{J\in\Pi^\even}$ and $\{v_J\}_{J\in\Pi^\odd}$ form bases of $S_+$ and $S_-$ respectively.

We now describe how $\IG(n-1,2n)$ sits inside $\PP(S_+) \times \PP(S_-)$. 
Our result is then that, using the above bases to identify $\PP^{2^{n-1}-1} \times \PP^{2^{n-1}-1}$ with $\PP(S_+) \times \PP(S_-)$,
the variety $\IG(n-1,2n)$ is equal to $X'$.
We first need a computation in the representation theory of $\Spin(2n)$.

\begin{Proposition}\label{iad}\dontshow{iad}
$\bwedge^{n-1} V$ is a direct summand of $S_+ \otimes S_-$. \qed
\end{Proposition}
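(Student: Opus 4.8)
The plan is to decompose both spin representations and $\bwedge^{n-1}V$ into irreducible $\Spin(2n)$-representations indexed by their highest weights, and then match summands. First I would recall the standard weight-theoretic description: with respect to the maximal torus acting diagonally on $W$ via characters $\pm\epsilon_i$ (abusing notation for both the basis vectors and the corresponding weights), the half-spin representation $S_+$ has weights $\tfrac12(\pm\epsilon_1\pm\cdots\pm\epsilon_n)$ with an even number of minus signs, and $S_-$ with an odd number; each is irreducible with highest weight $\tfrac12(\epsilon_1+\cdots+\epsilon_n)$ and $\tfrac12(\epsilon_1+\cdots+\epsilon_{n-1}-\epsilon_n)$ respectively. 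The tensor product $S_+\otimes S_-$ therefore has highest weight $\epsilon_1+\cdots+\epsilon_{n-1}$, so it contains a copy of the irreducible $\Spin(2n)$-representation with that highest weight.

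Next I would identify that irreducible with $\bwedge^{n-1}V$. For the orthogonal group in $2n$ variables, the fundamental representations $\bwedge^k V$ for $k\le n-2$ are irreducible with highest weight $\epsilon_1+\cdots+\epsilon_k$, while $\bwedge^{n-1}V$ is irreducible with highest weight $\epsilon_1+\cdots+\epsilon_{n-1}$ (it is $\bwedge^n V$ that splits, into self-dual and anti-self-dual pieces). Since $V$ has no fixed vectors and $\bwedge^{n-1}V$ is a single irreducible, comparing highest weights shows it is exactly the irreducible summand of $S_+\otimes S_-$ found above.

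It then remains to check that this summand occurs with multiplicity exactly one, i.e. that $\bwedge^{n-1}V$ is really a \emph{direct summand} and not merely a subquotient appearing multiple times — but over $\CC$ with a reductive group every summand splits off, so ``direct summand'' is automatic, and multiplicity is not actually asserted by the statement. The cleanest route to ``it occurs'' is to exhibit a nonzero $\Spin(2n)$-equivariant map $\bwedge^{n-1}V \to S_+\otimes S_- = \mathrm{Hom}(S_-^\vee, S_+)$, or equivalently (using the $\Spin$-invariant pairing identifying $S_\pm$ with $S_\mp^\vee$ up to a twist by the parity of $n$) a map $\bwedge^{n-1}V\otimes S_\mp\to S_\pm$. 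Such a map is supplied by Clifford multiplication: $V$ acts on $S=S_+\oplus S_-$ swapping the two summands, and iterating gives $\bwedge^{\bullet}V\otimes S\to S$, whose restriction to $\bwedge^{n-1}V$ lands appropriately in $S_\pm$ depending on parity. One checks this map is nonzero (e.g. evaluate $\epsilon_1\wedge\cdots\wedge\epsilon_{n-1}$ acting on a suitable $v_J$) and equivariant, which is enough.

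The main obstacle is bookkeeping rather than conceptual: pinning down exactly which half-spin representation receives the image of $\bwedge^{n-1}V$ under Clifford multiplication, and making sure the highest-weight comparison is done with a consistent choice of Borel/positive system so that $\epsilon_1+\cdots+\epsilon_{n-1}$ is genuinely dominant for $D_n$ and is the highest weight of $\bwedge^{n-1}V$ and not of some twist. Since the paper flags that ``the main complexities of this section arise from keeping careful track of signs,'' I expect the author's proof to be a short representation-theoretic remark (hence the ``\qed'' on the statement), deferring the sign-sensitive explicit description to the subsequent construction of $\IG(n-1,2n)$ inside $\PP(S_+)\times\PP(S_-)$.
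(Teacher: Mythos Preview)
Your proposal is correct and follows essentially the same argument as the paper: observe that the tensor of the two highest weight vectors $s_0^+\otimes s_0^-$ is itself a highest weight vector of weight $\epsilon_1+\cdots+\epsilon_{n-1}$ (written $(2,2,\ldots,2,0)$ in the paper's doubled convention), and identify the resulting irreducible summand with $\bwedge^{n-1}V$. The Clifford-multiplication alternative you sketch is also valid but goes beyond what the paper does for this short lemma.
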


\begin{proof}
Let $s_0^+$ and $s_0^-$ be high weight vectors for $S_+$ and $S_-$. 
We write weights for $\Spin(2n)$ as $n$-tuples $(w_1, \ldots, w_n)$ with $w_i$ all odd or all even. 
So $s_0^+$ and $s_0^-$ have weights $(1,1,\ldots,1,1)$ and $(1,1,\ldots,1,-1)$ respectively.
Then the vector $s_0^+ \otimes s_0^-$, in $S_+ \otimes S_-$, is a high weight vector of weight $(2,2,\ldots,2,0)$. 
Since $\bwedge^{n-1} V$ is the simple representation with high weight vector of weight $(2,2,\ldots,2,0)$, we deduce that $\bwedge^{n-1} V$ is a summand of $S_+ \otimes S_-$.
\end{proof}

Continue the notations $s_0^+$ and $s_0^-$, and set $v_0=s_0^+ \otimes s_0^-$. 
The inclusion $\PP(\bwedge^{n-1} V)\hookrightarrow \PP(S_+ \otimes S_-)$ sends $[v_0]$ to $[s_0^+ \otimes s_0^-]$.
Let $L\subset V$ denote the span of $\epsilon_1,\ldots,\epsilon_{n-1}$.
Then $L$ is an $n-1$ dimensional isotropic subspace, and its image
under the Pl\"ucker embedding $\G(n-1,2n)\hookrightarrow \PP(\bwedge^{n-1} V)$ is exactly $[v_0]$.
Here, $\G(n-1,2n)$ denotes the Grassmannian of all $n-1$ dimensional subspaces of the $2n$ dimensional space $V$.
Now consider the Segre embedding $\PP(S_+) \times \PP(S_-) \into \PP(S_+ \otimes S_-)$.
Since $\IG(n-1,2n)$ consists of a single $\Spin(2n)$ orbit, and $\PP(S_+) \times \PP(S_-)$ is a $\Spin(2n)$ invariant subscheme of $\PP(S_+ \otimes S_-)$,
the image of $\IG(n-1,2n)$ in $\PP(S_+ \otimes S_-)$ lands inside $\PP(S_+) \times \PP(S_-)$.
So we get the following diagram of inclusions: 
\begin{equation}
\label{cem}
\phantom{.}
\put(61,35){\line(1,0){300}}
\put(61,35){\line(-2,-1){10}}
\put(44.6,27.8){\rotatebox{30}{$\scriptscriptstyle \subset$}}
\put(358.9,32.9){\rotatebox{-30}{$\to$}}
\begin{array}{ccccccccc}
\!\IG(n\!-\!1,2n)\!&\!\!\!\hookrightarrow\!\!\! &
\!\G(n\!-\!1,2n)\!&\!\!\!\hookrightarrow\!\!\! &
\!\PP(\bwedge^{n-1} V)\!&\!\!\!\hookrightarrow\!\!\! &
\!\PP(S_+ \!\otimes\! S_-)\!&\!\!\!\hookleftarrow\!\!\! &
\!\PP(S_+) \!\times\! \PP(S_-)\! \\
\rotatebox{90}{$\in$}&&
\rotatebox{90}{$\in$}&&
\rotatebox{90}{$\in$}&&
\rotatebox{90}{$\in$}&&
\rotatebox{90}{$\in$}
\\
L&\mapsto&L&\mapsto&[v_0]&\mapsto&[s_0^+ \otimes s_0^-]&\raisebox{1.05ex}{\rotatebox{180}{$\mapsto$}}&([s_0^+], [s_0^-])
\end{array}
\end{equation}
\dontshow{cem} We can now state the main theorem of this section.

\begin{theorem}\label{Ttm}\dontshow{Ttm}
Using the identification of $\PP(\CC^{\Pi^\even})\times\PP(\CC^{\Pi^\odd})$
with $\PP(S_+)\times \PP(S_-)$
provided by the bases $\{v_J\}_{J\in\Pi^\even}$ of $S_+$ and $\{v_J\}_{J\in\Pi^\odd}$ of $S_{-}$, 
the subvariety $X'$ defined in (\ref{trbi})
is equal to the image of $\IG(n-1,2n)$ under the embedding (\ref{cem}).
\end{theorem}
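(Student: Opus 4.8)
The plan is to identify $\IG(n-1,2n)$ with $X'$ by producing an explicit $\Spin(2n)$-equivariant parametrization of $\IG(n-1,2n)$ in the bases $\{v_J\}$ and checking that the coordinates of a generic point satisfy precisely the relations (\ref{trbi}), together with a dimension/irreducibility count to conclude equality of varieties. First I would pin down the embedding $\IG(n-1,2n)\hookrightarrow \PP(S_+)\times\PP(S_-)$ concretely: a point of $\IG(n-1,2n)$ is an isotropic $(n-1)$-plane $M\subset V$, and the diagram (\ref{cem}) sends $M$ to the pair $([\sigma_+(M)],[\sigma_-(M)])$, where $s_0^+\otimes s_0^-$ is carried by $\Spin(2n)$ and the map $\bwedge^{n-1}V\hookrightarrow S_+\otimes S_-$ is the unique (up to scalar) equivariant inclusion. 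I would use the fact that $\IG(n-1,2n)$ is a single $\Spin(2n)$-orbit of $[s_0^+\otimes s_0^-]$ to reduce everything to a computation in the Clifford algebra: the pure spinors $[s]\in\PP(S_\pm)$ with annihilator a maximal isotropic are exactly the highest weight orbits, and an isotropic $(n-1)$-plane sits inside a pencil of maximal isotropics, which is what forces $M$ to land in $\PP(S_+)\times\PP(S_-)$ rather than just $\PP(S_+\otimes S_-)$.

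Next I would set up coordinates. Using the open cell of $\IG(n-1,2n)$ where $M$ is the graph of a skew-symmetric-type map $W^\vee\supset (\text{hyperplane})\to W$, one can write a rational section $M\mapsto (x_J^+(M))_{J\in\Pi^\even},(x_J^-(M))_{J\in\Pi^\odd}$ as sub-Pfaffians or sub-minors of a single skew matrix (the standard "spinor coordinates are Pfaffians" description for the even orthogonal Grassmannian, cut down by one dimension). The key step is then a direct verification that these Pfaffian-type coordinates satisfy the three-term relation (\ref{trbi}) for every $I\in\Pi=\{0,1\}^n$ and every $j<k<\ell$. This is where keeping track of signs is unavoidable; I expect each such relation to be, after identifying the eight relevant $v_I$'s with the appropriate Pfaffian minors, an instance of a classical Pfaffian/Plücker-type quadratic identity (a Grassmann–Plücker three-term relation or a Tanner-type Pfaffian identity). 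Concretely, one checks it by acting by an element of $\Spin(2n)$ to move the cube $\{I+xe_j+ye_k+ze_\ell\}$ into a standard position, so that only finitely many cases — essentially the $n=3$ case embedded via the three coordinates $j,k,\ell$ — need to be verified by hand; the relation (\ref{trbi}) only involves coordinates differing in the $j,k,\ell$ slots, so equivariance in the remaining $n-3$ directions reduces us to $\Spin(6)\cong \SL(4)$, where the identity is the classical Plücker relation on $\G(2,4)$.

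Having shown $\IG(n-1,2n)\subseteq X'$ (as the closure of the image of the parametrized open set), I would finish with a dimension argument: by the Corollary following Theorem~\ref{irred}, $X(A)$ — hence $X'$, which is isomorphic to $X$ — is irreducible of dimension $\sum_{i<j}a_ia_j+\sum_i a_i-1$, which for $A=(1,\dots,1)$ equals $\binom n2+n-1=\binom{n+1}2-1$; on the other hand $\dim\IG(n-1,2n)=\binom n2 + (n-1)=\binom{n+1}{2}-1$ as well. Since $\IG(n-1,2n)$ is irreducible and closed and is contained in the irreducible variety $X'$ of the same dimension, the two coincide. The main obstacle I anticipate is not the dimension bookkeeping but the sign-consistent identification of the basis vectors $v_J$ with the correct (signed) Pfaffian minors so that (\ref{trbi}) comes out with exactly the signs shown and no spurious factors; this is precisely the point the authors flag when they say "the main complexities of this section arise from keeping careful track of signs," and it will require fixing once and for all an ordering convention on $W$ and a normalization of the equivariant map $\bwedge^{n-1}V\hookrightarrow S_+\otimes S_-$ compatible with the action of the Clifford generators $\epsilon_i$ and $\epsilon_i^\vee$ on $S=\bwedge^\bullet W$.
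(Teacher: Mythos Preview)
Your plan is correct and follows essentially the same route as the paper: show that $\IG(n-1,2n)$ satisfies the relations~(\ref{trbi}) by reducing each such relation to the $n=3$ case via the $\Spin^{\{jk\ell\}}(6)$-subgroup acting on the three slots $j,k,\ell$, then conclude equality from irreducibility and the matching dimension count $\binom{n+1}{2}-1$. The paper implements the reduction not by ``moving the cube to standard position'' by a group element (which cannot literally be done while preserving the coordinate basis) but by constructing an explicit $\Spin^{\{jk\ell\}}(6)$-equivariant projection $\pi_\otimes:S_+\otimes S_-\to S_+(6)\otimes S_-(6)$ and tracking the signs on the eight relevant basis vectors; your second formulation (``equivariance in the remaining $n-3$ directions'') is exactly this, and the Pfaffian description you sketch is mentioned in the paper as an alternative but not used.
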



Instead of the representation theory above, we could describe the coordinates on $\PP(S_+) \times \PP(S_-)$ explicitly in terms of Pfaffians. 
We sketch this approach now, but we will not use it.
The isotropic Grassmannian $\IG(n,2n)$ has two connected components, which we will denote $\LG_+(2n)$ and $\LG_-(2n)$.
Given an isotropic subspace $K \subset V$, of dimension $n-1$, there are precisely two $n$-dimensional isotropic subspaces containing $K$.
One of these subspaces, which we will call $L_+$, corresponds to a point of $\LG_{+}(2n)$, and the other, which we will call $L_-$, to a point of $\LG_{-}(2n)$
Also, $K$ can be recovered from $L_{+}$ and $L_{-}$ as $L_{+} \cap L_{-}$.
Thus, $\IG(n-1, 2n)$ embeds in $\LG_{+}(2n) \times \LG_{-}(2n)$.

The Lagrangian Grassmannians $\LG_{\pm}(2n)$ embed in $\PP(S_{\pm})$ and our embedding of $\IG(n-1, 2n)$ can be described as the composition $\IG(n-1, 2n) \into \LG_{+}(2n) \times \LG_{-}(2n) \into \PP(S_+) \times \PP(S_-)$. 
So the coordinates corresponding to the even and odd vertices of the cube are coordinates of the subspaces $L_{+}$ and $L_{-}$.
We now discuss the meaning of these coordinates.
Let $J$ be an element of $\Pi^{\even}$, let $q_J$ be the corresponding coordinate on $\PP(S_+)$, and let $I$ be the corresponding subset of $[n]$.

A generic point of $\LG_{+}(2n)$ is the rowspan of a $n \times (2n)$ matrix of the form $\left( \begin{smallmatrix} A & \Id \end{smallmatrix} \right)$, where $A$ is a skew-symmetric $n \times n$ matrix. 
Let $A_{II}$ be the submatrix of $A$ consisting of the rows and columns indexed by $I$; then $q_J(L_{+})$ is the Pfaffian of $A_{II}$. 
The case of $\LG_{-}$ is similar but a bit messier. 
This concludes our sketch of the approach using Pfaffians.

We will soon verify the case $n=3$ of Theorem~\ref{Ttm}.
When $n$ is odd, $S_+$ and $S_-$ are dual representations of the spin group.
It will be essential to know the explicit pairing between these spaces, in our preferred bases. 

\begin{lemma} \label{pairing3}
Let $n=3$.
Then the bilinear pairing $B_3:S_+\times S_-\to \CC$ given by
\begin{align*}
B_3(v_{000}, v_{111}) &= 1 & B_3(v_{011}, v_{100}) &= -1 \\
B_3(v_{101}, v_{010}) &= 1 & B_3(v_{110}, v_{001}) &= -1
\end{align*}
is $\Spin(3)$ invariant.
\end{lemma}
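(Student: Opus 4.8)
The plan is to verify invariance infinitesimally, using the Lie algebra $\mathfrak{so}(6) = \mathfrak{spin}(6)$, rather than checking invariance under a set of group generators. Since $\Spin(3)$ — here I read this as $\Spin(6)$, acting on the $8$-dimensional space $S = S_+ \oplus S_-$ with $S_\pm$ four-dimensional — is connected, a bilinear form $B$ on $S_+ \times S_-$ is invariant if and only if $B(\xi \cdot s_+, s_-) + B(s_+, \xi \cdot s_-) = 0$ for all $\xi$ in a spanning set of $\mathfrak{spin}(6)$ and all basis vectors $s_\pm$. So the first step is to recall that $\mathfrak{spin}(6)$ sits inside the Clifford algebra as the span of the quadratic elements $v \cdot w - w \cdot v$ for $v, w \in V = W \oplus W^\vee$; concretely it is spanned by $\epsilon_i \epsilon_j$ ($i < j$), $\epsilon_i^\vee \epsilon_j^\vee$ ($i<j$), and $\epsilon_i \epsilon_j^\vee - \tfrac12\delta_{ij}$, acting on $S = \bwedge^\bullet W$ by the module structure already defined in the excerpt ($\epsilon_i$ acts by wedging, $\epsilon_i^\vee$ by contraction). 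It suffices to check the derivation identity on these generators.

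The second step is the actual computation. For each Clifford generator I would write down its matrix in the bases $\{v_{000}, v_{011}, v_{101}, v_{110}\}$ of $S_+$ and $\{v_{111}, v_{100}, v_{010}, v_{001}\}$ of $S_-$, keeping careful track of the Koszul signs coming from $\inn$. For instance, $\epsilon_1$ sends $v_{000} \mapsto v_{100}$, $v_{011} \mapsto v_{111}$ (with sign, since $\epsilon_1 \wedge \epsilon_2 \wedge \epsilon_3$ requires reordering — actually $\epsilon_1 \wedge (\epsilon_2 \wedge \epsilon_3) = v_{111}$, no sign), $v_{101} \mapsto 0$, $v_{110}\mapsto 0$; and $\epsilon_1^\vee$ does the reverse with contraction signs. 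Then one checks, for each generator $\xi$ and each pair $(v_J, v_K)$ with $J$ even and $K$ odd, that $B_3(\xi v_J, v_K) = -B_3(v_J, \xi v_K)$. Because $B_3$ is supported only on the four "complementary" pairs $(J, J^c)$, most of these identities are vacuous, and the nonzero ones reduce to a sign bookkeeping: the generators $\epsilon_i \epsilon_j$ and $\epsilon_i^\vee \epsilon_j^\vee$ move a vector to its complement or annihilate it, while $\epsilon_i \epsilon_j^\vee$ is (almost) diagonal, so one just needs the diagonal entries on $v_J$ and on $v_{J^c}$ to be negatives of each other, which they are since wedging by $\epsilon_i$ and contracting by $\epsilon_i^\vee$ have opposite effect on whether $i \in J$.

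An alternative, cleaner route — which I would mention but perhaps not carry out — is to identify $B_3$ with a known invariant pairing. The space $S_+ \otimes S_-$ contains $\bwedge^2 V$ (the analogue of Proposition~\ref{iad} for $n=3$: weights $(1,1,1) + (1,1,-1) = (2,2,0)$), and one can also ask for the trivial summand; but in fact for $n=3$ the relevant fact is that $\bwedge^{n-1}V = \bwedge^2 V$ and the pairing $S_+ \times S_- \to \CC$ is (up to scalar) the unique $\Spin(6)$-invariant one, whose existence follows from $S_+ \cong S_-^\vee$ (a standard fact for $\Spin(4k+2)$; note $6 = 4\cdot1 + 2$). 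One can pin down the scalar and the signs by evaluating on the highest weight vectors $v_{000}$, $v_{111}$ and then transporting by explicit Weyl group / Clifford elements. The main obstacle in either approach is the same: getting all the Koszul/contraction signs right, in particular making sure the sign pattern $(+,-,+,-)$ in the statement is forced and not an artifact of a basis-ordering convention. I expect the direct infinitesimal check to be the most reliable way to nail this down, since each identity it requires is a one-line sign comparison.
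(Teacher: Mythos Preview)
Your proposal is correct but takes a different route from the paper. Rather than verifying invariance of $B_3$ directly from its four tabulated values, the paper first writes down, for general $n$, an explicit $\Pin(2n)$-invariant bilinear form on all of $S = \bwedge^\bullet W$: namely $B = \sum_{k} (-1)^{k(k-1)/2}\langle\,,\,\rangle_k$, where $\langle\eta,\omega\rangle_k = \Vol(\eta\wedge\omega)$ for $\eta\in\bwedge^k W$. Invariance of this $B$ is stated as a separate lemma and checked (left to the reader) on the group generators $v \in V$ of unit length, rather than on Lie-algebra generators as you propose. The proof of Lemma~\ref{pairing3} itself then reduces to evaluating $B$ at the four basis pairs when $n=3$; the sign pattern $(+,-,+,-)$ falls out of the factor $(-1)^{k(k-1)/2}$ combined with the permutation sign in $\Vol(\eta\wedge\omega)$. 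Your infinitesimal approach would certainly work and is arguably more self-contained, but it trades one uniform identity for a case check over the fifteen generators of $\mathfrak{so}(6)$; the paper's approach has the advantage that invariance is established once for all $n$ and the specific signs are read off a closed formula rather than extracted from a matrix computation. Your alternative suggestion --- uniqueness of the invariant pairing up to scalar, since $S_+\cong S_-^\vee$ for $\Spin(4k+2)$ --- is in spirit what the paper's construction realizes, with the scalar and all four signs pinned down by the explicit wedge-and-volume formula rather than by transporting along Weyl-group or Clifford elements.
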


In order to prove Lemma~\ref{pairing3}, we describe the invariant bilinear form for general $n$.
Let $\Vol: \bwedge^n W \to \CC$ denote the standard volume form sending $\epsilon_1\wedge\ldots\wedge \epsilon_n$ to 1,
and let $\langle \ , \ \rangle_k: \bwedge^k W \times \bwedge^{n-k} W \to \CC$ be the pairing $\langle \eta, \omega \rangle_k:=\Vol(\eta \wedge \omega)$.
Finally, let $B( \ , \ )$ be the bilinear form on $S=\bwedge^{\bullet} W$ given by $\sum_{k=0}^n (-1)^{k(k-1)/2} \langle \ , \ \rangle_k$. 
The form $B$ is non-degenerate and, depending on the parity of $\lfloor \frac{n}{2}\rfloor$, either symmetric or antisymmetric.

\begin{lemma}
The form $B$ is invariant under the action of $\Pin(2n)$.
\end{lemma}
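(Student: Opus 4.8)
The statement to prove is that the bilinear form $B(\ ,\ ) = \sum_{k=0}^n (-1)^{k(k-1)/2} \langle\ ,\ \rangle_k$ on $S = \bwedge^\bullet W$ is invariant under $\Pin(2n)$. Since $\Pin(2n)$ is generated by the unit vectors $v \in V$ with $\langle v,v\rangle = 1$, it suffices to show that for each such $v$, the operator $\rho(v)\colon x \mapsto -vxv$ (or rather the action of $v$ itself on $S$, up to the sign twist built into the definition of $\Pin$) preserves $B$ in the appropriate twisted sense. Concretely, I would reduce to checking that $B(v\cdot s, v\cdot s') = \langle v,v\rangle\, B(s,s') = B(s,s')$ for all $s \in S_\pm$, $s' \in S_\mp$ and all $v$ of length $1$; the sign $(-1)^c$ in $\rho(\gamma)=(-1)^\gamma \gamma x\gamma^{-1}$ is exactly what makes the two-sided action on $S$ descend to a one-sided statement about $B$, because $v$ swaps $S_+$ and $S_-$ and $B$ pairs them.

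**Key steps.** First I would record the two elementary pieces of the Clifford action: for $u \in W$, multiplication by $u$ is wedging $u\wedge(-)$, and for $u^\vee \in W^\vee$ it is contraction $u^\vee \inn (-)$; a general length-one $v = (w, w^\vee)$ acts by $w \wedge (-) + w^\vee \inn(-)$, with $\langle v,v\rangle = w^\vee(w)$. Second, I would check the invariance on the two extreme types of generators separately by a direct computation on the pairings $\langle\ ,\ \rangle_k$. (a) For $v = \epsilon_i$ (so $w^\vee = 0$, and $\langle v,v\rangle = 0$): wedging with $\epsilon_i$ raises degree by one, sending $\bwedge^k W$ to $\bwedge^{k+1}W$; I must verify that $B(\epsilon_i\wedge\eta,\ \epsilon_i\wedge\omega) = 0$, which follows because $\epsilon_i \wedge \epsilon_i \wedge (\cdots) = 0$ inside $\Vol(\epsilon_i\wedge\eta\wedge\epsilon_i\wedge\omega)$ after moving one $\epsilon_i$ past the other — the sign bookkeeping with $(-1)^{k(k-1)/2}$ must come out consistently with the antisymmetry, and this is the place the particular coefficients $(-1)^{k(k-1)/2}$ are forced. (b) Similarly for $v = \epsilon_i^\vee$, using that $\epsilon_i^\vee \inn (\epsilon_i^\vee \inn (\cdots)) = 0$. (c) For a genuine length-one vector, e.g. $v = \tfrac12\epsilon_i + \epsilon_i^\vee$ with $\langle v,v\rangle = 1$, I would expand $v\cdot = \tfrac12\epsilon_i\wedge(-) + \epsilon_i^\vee\inn(-)$, so that $v\cdot v\cdot = \tfrac12(\epsilon_i\wedge\epsilon_i^\vee\inn + \epsilon_i^\vee\inn\epsilon_i\wedge) = \tfrac12\cdot\Id$ on the relevant graded pieces by the Clifford relation $\epsilon_i\wedge\epsilon_i^\vee\inn + \epsilon_i^\vee\inn\epsilon_i\wedge = \Id$; from this and bilinearity $B(v s, v s') = \langle v,v\rangle B(s,s')$ drops out once (a) and (b) are known. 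Third, since every length-one vector is $O(2n)$-conjugate to such a standard one and $B$'s defining data ($\Vol$ on the chosen basis) is natural, either I invoke that or — cleaner — I note the set of $v$ for which $B(v\cdot s, v\cdot s') = \langle v,v\rangle B(s,s')$ holds is cut out by polynomial identities already verified on a Zariski-dense set of $v\in V$, hence holds for all $v$, in particular all unit $v$; these generate $\Pin(2n)$, giving invariance.

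**Main obstacle.** The genuinely delicate part is the sign computation in step (a)/(b): one has to verify that $B(\epsilon_i\wedge\eta, \epsilon_i\wedge\omega)$ vanishes term-by-term in a way compatible with the sign factors $(-1)^{k(k-1)/2}$, and equally that in step (c) the cross terms (one factor of $\epsilon_i\wedge$ and one of $\epsilon_i^\vee\inn$) reassemble correctly into $\langle v,v\rangle B$. Tracking how moving $\epsilon_i$ past a $k$-form interacts with the passage from $\langle\ ,\ \rangle_k$ to $\langle\ ,\ \rangle_{k\pm 1}$ — i.e. checking $(-1)^{k(k-1)/2}$ versus $(-1)^{(k\pm1)k/2}$ against the Koszul sign $(-1)^k$ — is exactly the computation that pins down why these particular coefficients make $B$ invariant rather than some other twisted pairing. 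I expect this to be a short but sign-sensitive lemma-within-the-lemma; everything else (reduction to generators, density argument, the Clifford relation) is formal.
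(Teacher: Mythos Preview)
Your approach is essentially the paper's: the paper also reduces to generators, noting that it suffices to check $B(v\cdot s_1, v\cdot s_2)=B(s_1,s_2)$ for length-one $v\in V$, and then declares the remaining computation routine and leaves it to the reader. You go further and sketch that computation, which is fine.

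There is, however, a logical gap in your step three. You have only verified the identity $B(v\cdot s,v\cdot s')=\langle v,v\rangle B(s,s')$ on finitely many vectors (the $\epsilon_i$, the $\epsilon_i^\vee$, and one mixed vector per index). A finite set is not Zariski dense in $V$, so your density argument does not go through as stated. The $O(2n)$-conjugacy alternative is circular: you do not yet know that $B$ is $O(2n)$-invariant --- that is precisely what you are proving. The clean fix is to observe that the identity you want is \emph{quadratic} in $v$; polarizing gives the bilinear identity
\[
B(u\cdot s, w\cdot s')+B(w\cdot s, u\cdot s')=2\langle u,w\rangle\,B(s,s'),
\]
which it suffices to check for $u,w$ ranging over the basis $\{\epsilon_i,\epsilon_i^\vee\}$. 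Equivalently (and more simply), show directly that each basis vector acts self-adjointly with respect to $B$, i.e.\ $B(u\cdot s,s')=B(s,u\cdot s')$ for $u=\epsilon_i$ or $u=\epsilon_i^\vee$; this is linear in $u$ and immediately gives $B(v\cdot s,v\cdot s')=B(s,v^2\cdot s')=\langle v,v\rangle B(s,s')$ via the Clifford relation. This is where your sign computation with $(-1)^{k(k-1)/2}$ versus $(-1)^k$ actually lives, and once you recast steps (a)--(c) in this form the argument is complete.
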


This is a routine computation; since $\Pin(2n)$ is generated by vectors $v\in V$ of length 1,
it is enough to check that
\[
B(v \cdot s_1, v \cdot s_2)=B(s_1, s_2)
\]
for such $v\in V$, and vectors $s_1, s_2\in S$. We leave this to the reader.

\begin{proof}[Proof of Lemma~\ref{pairing3}]
We compute:
\begin{alignat*}{3}
B_3(v_{000}, v_{111}) &= \phantom{-} \Vol \left( 1 \wedge ( \epsilon_1 \wedge \epsilon_2 \wedge \epsilon_3 ) \right) \ &= \phantom{-}1 \\
B_3(v_{011}, v_{100}) &= - \Vol \left( (\epsilon_2 \wedge \epsilon_3) \wedge  \epsilon_1  \right) &= -1 \\
B_3(v_{101}, v_{010}) &= - \Vol \left( (\epsilon_1 \wedge \epsilon_3) \wedge  \epsilon_2  \right) &= \phantom{-}1 \\
B_3(v_{110}, v_{001}) &= - \Vol \left( (\epsilon_1 \wedge \epsilon_2) \wedge  \epsilon_3  \right) &= -1 
\end{alignat*}
\end{proof}

We now verify the case $n=3$ of Theorem~\ref{Ttm}.
Let $S_+(6)$ and $S_-(6)$ be the two spin representations of $\Spin(6)$,
and let $x_{000},x_{110},x_{101},x_{011}\in S_+(6)^\vee$, and  
$x_{100},x_{010},x_{001},x_{111}\in S_-(6)^\vee$ be the bases dual to $\{v_J\}_{J\in\Pi^\even}$ and $\{v_J\}_{J\in\Pi^\odd}$.

\begin{lemma}\label{pxi}\dontshow{pxi}
The subvariety of $\PP(S_+(6))\times\PP(S_-(6))$ defined by the equation
\begin{equation}\label{los3}
x_{000}\, x_{111} +
x_{101}\, x_{010} =
x_{110}\, x_{001}
+ x_{011}\, x_{100}
\end{equation}
is equal to $\IG(2,6)$ in its embedding (\ref{cem}).
\end{lemma}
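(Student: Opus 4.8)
The plan is to recognize equation~(\ref{los3}) as the single quadratic relation that carves out the image of $\IG(2,6)$ inside the Segre variety $\PP(S_+(6))\times\PP(S_-(6))$. Concretely, I would identify the zero locus of~(\ref{los3}) with the intersection $Z:=\big(\PP(S_+(6))\times\PP(S_-(6))\big)\cap\PP(\bwedge^2 V)$ taken inside $\PP(S_+(6)\otimes S_-(6))$, and then settle equality with $\IG(2,6)$ by a dimension count. One inclusion comes for free from the diagram~(\ref{cem}): the embedding of $\IG(2,6)$ into $\PP(S_+(6))\times\PP(S_-(6))$ factors through $\PP(\bwedge^2 V)\into\PP(S_+(6)\otimes S_-(6))$, so the image of $\IG(2,6)$ lies in $Z$; once $Z$ is shown to be the hypersurface~(\ref{los3}), this gives $\IG(2,6)\subseteq\{(\ref{los3})=0\}$.

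To identify $Z$, I would run the following representation-theoretic bookkeeping. For $n=3$ we have $\dim\big(S_+(6)\otimes S_-(6)\big)=16$ while $\dim\bwedge^2 V=15$, so by Proposition~\ref{iad} (which gives multiplicity at least one) the representation $\bwedge^2 V$ occurs in $S_+(6)\otimes S_-(6)$ with multiplicity exactly one, and its complementary summand is one–dimensional, hence trivial since $\Spin(6)$ is semisimple. Therefore $\bwedge^2 V$ is precisely the kernel of the (unique up to scalar) $\Spin(6)$–invariant linear functional $S_+(6)\otimes S_-(6)\to\CC$, which by Lemma~\ref{pairing3} is $B_3$. Consequently, on the Segre variety, $\PP(\bwedge^2 V)$ is cut out by $B_3(s_+,s_-)=0$; expanding $B_3$ in the bases $\{v_J\}$ using the four values recorded in Lemma~\ref{pairing3} turns this condition into $x_{000}x_{111}+x_{101}x_{010}=x_{110}x_{001}+x_{011}x_{100}$, which is exactly~(\ref{los3}). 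This proves $Z=\{(\ref{los3})=0\}$ and hence the inclusion $\IG(2,6)\subseteq\{(\ref{los3})=0\}$.

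To conclude, I would deduce equality from irreducibility and dimension. The bilinear form appearing in~(\ref{los3}) has coefficient matrix a signed $4\times 4$ permutation matrix, so it has rank $4$; it is therefore irreducible in the bigraded coordinate ring and cuts out an irreducible hypersurface of dimension $5$ in $\PP^3\times\PP^3=\PP(S_+(6))\times\PP(S_-(6))$. On the other hand $\IG(2,6)$ is a single $\Spin(6)$–orbit, hence irreducible, and has dimension $5$ (for instance as the zero locus, inside the $8$–dimensional Grassmannian $\G(2,6)$, of the three independent conditions expressing that a plane is isotropic). An irreducible closed subvariety of dimension $5$ contained in an irreducible variety of the same dimension is the whole thing, so the subvariety defined by~(\ref{los3}) equals the image of $\IG(2,6)$. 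The step requiring the most care is the representation-theoretic bookkeeping, namely verifying that the copy of $\bwedge^2 V$ used in diagram~(\ref{cem}) is exactly the one annihilated by $B_3$ — this is where the multiplicity-one statement implicit in Proposition~\ref{iad} and the invariance in Lemma~\ref{pairing3} are essential — together with getting the signs right when expanding $B_3$ in coordinates.
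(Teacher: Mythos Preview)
Your proposal is correct and follows essentially the same approach as the paper: identify $\bwedge^2 V\subset S_+(6)\otimes S_-(6)$ as the kernel of the invariant pairing $B_3$ from Lemma~\ref{pairing3}, read off equation~(\ref{los3}) in coordinates, note that $\IG(2,6)$ lies in $\PP(\bwedge^2 V)$ via diagram~(\ref{cem}), and finish by a dimension/irreducibility count. Your version is slightly more explicit than the paper's in justifying the irreducibility of the hypersurface (via the rank of the bilinear form) and in explaining why the one-dimensional complement must be trivial, but the core argument is the same.
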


\begin{proof}
Note that $\IG(2,6)$ has dimension $5$, so it is a hypersurface in the $6$-dimensional variety $\PP(S_+) \times \PP(S_1)$.
So it is enough to check that this equation does vanish on $\IG(2,6)$.

Notice that $\bigwedge^2 V$ has dimension $15$, while $S_+ \otimes S_-$ has dimension $16$. 
The kernel of the pairing $B_3 : S_+ \otimes S_- \to \CC$ is a $15$-dimensional sub-representation of $S_+ \otimes S_-$, so it must be equal to $\bigwedge^2 V$. 
(We use the standard fact that $\bigwedge^2 V$ is irreducible.)
By Lemma~\ref{pairing3}, the equation of this kernel is $x_{000} \otimes x_{111} - x_{011} \otimes x_{100} + x_{101} \otimes x_{010} - x_{110} \otimes x_{001}$.
Restricted to $\PP(S_+) \times \PP(S_-)$, this equation becomes $x_{000}  x_{111} - x_{011}  x_{100} + x_{101} x_{010} - x_{110}  x_{001}$.
Since $\IG(2,6)$ is in $\bigwedge^2 V$, we see that this equation does vanish on $\IG(2,6)$, as required.
\end{proof}

We will use a similar approach in the general case: showing that $\IG(n-1, 2n)$ has the right dimension and then showing that it obeys the equations~(\ref{trbi}).
We perform the dimensional computation now.

\begin{lemma}\label{codi}\dontshow{codi}
Both $X'$ and $\IG(n-1,2n)$ are of dimension $\binom{n+1}{2}-1$.
\end{lemma}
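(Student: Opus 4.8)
The statement splits into two unrelated dimension counts, one for $X'$ and one for $\IG(n-1,2n)$; I would carry them out in that order.

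For $X'$, the plan is simply to reduce to what has already been established. By the lemma identifying $X'$ with $X$, it suffices to compute $\dim X(1,1,\ldots,1)$. The Corollary to Theorem~\ref{irred} gives $\dim X(A)=\sum_{i<j}a_ia_j+\sum_i a_i-1$; specializing to $a_1=\cdots=a_n=1$ this is $\binom{n}{2}+n-1$, and since $\binom{n}{2}+n=\binom{n+1}{2}$ by Pascal's rule we get $\dim X'=\binom{n+1}{2}-1$. Nothing here is delicate.

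For $\IG(n-1,2n)$, I would exploit that, as noted just before the statement of Theorem~\ref{Ttm}, $\Spin(2n)$ acts transitively on it, so it is smooth and its dimension equals that of any tangent space. The cleanest way to produce that tangent space is a local computation inside the ordinary Grassmannian $\G(n-1,2n)$ near the isotropic plane $L=\langle\epsilon_1,\ldots,\epsilon_{n-1}\rangle$: parametrize nearby $(n-1)$-planes as graphs of linear maps $\phi\colon L\to V/L$, a chart of dimension $(n-1)(n+1)=n^2-1$, and impose the isotropy conditions $\langle\epsilon_i+\phi(\epsilon_i),\,\epsilon_j+\phi(\epsilon_j)\rangle=0$ for $1\le i\le j\le n-1$. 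Using the explicit form of $\langle\,,\,\rangle$ on $V=W\oplus W^\vee$, the linear part of these equations picks out exactly the symmetric part of the $(n-1)\times(n-1)$ block of the matrix of $\phi$ recording its $W^\vee$-components, i.e. $\binom{n}{2}$ manifestly independent linear forms. Hence $\IG(n-1,2n)$ is cut out transversally, of codimension $\binom{n}{2}$ in $\G(n-1,2n)$, so $\dim\IG(n-1,2n)=(n^2-1)-\binom{n}{2}=\tfrac{(n-1)(n+2)}{2}=\binom{n+1}{2}-1$, matching the count for $X'$.

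The one step requiring genuine care is the transversality claim -- that the isotropy equations impose $\binom{n}{2}$ \emph{independent} conditions rather than fewer -- although this is immediate from the explicit linearization above. If one prefers to avoid even this, an alternative route uses the fact recalled in the Pfaffian sketch preceding this lemma: every $(n-1)$-dimensional isotropic subspace sits in exactly one maximal isotropic subspace of each of the two families, which realizes $\IG(n-1,2n)$ as a $\PP^{n-1}$-bundle over a connected component $\LG_\pm(2n)$ of the Lagrangian Grassmannian; combined with the standard value $\dim\LG_\pm(2n)=\binom{n}{2}$, this again yields $\binom{n}{2}+(n-1)=\binom{n+1}{2}-1$.
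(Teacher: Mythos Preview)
Your argument is correct. For $X'$ you follow the paper exactly: both of you reduce to Theorem~\ref{irred}, you via its Corollary and the isomorphism $X'\cong X$, the paper via $\Yo$ and $\Xo'$ directly; the arithmetic is the same.

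For $\IG(n-1,2n)$ you take a genuinely different route. The paper computes the dimension as that of a homogeneous space: $\dim\SO(2n)-\dim(\text{stabilizer})=\binom{2n}{2}-\bigl(3\binom{n}{2}+1\bigr)=\binom{n+1}{2}-1$. You instead work in a local chart of the ambient Grassmannian and count the isotropy equations, getting $(n-1)(n+1)-\binom{n}{2}$; your alternative via the $\PP^{n-1}$-bundle over $\LG_{\pm}(2n)$ is a third valid path. All three are standard and equally short; the paper's version has the virtue of requiring no choice of chart, while yours makes the geometry of the isotropy condition explicit and avoids having to identify the parabolic stabilizer. Your transversality check (that the $\binom{n}{2}$ linear forms are independent because they involve distinct entries of the symmetric part of the $W^\vee$-block) is the only point needing care, and you handle it correctly.
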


\begin{proof}
By Theorem~\ref{irred}, the variety $\Yo$ is irreducible of dimension $\binom{n}{2}+n+1=\binom{n+1}{2}+1$.
It follows that $\Xo'\simeq \Xo=\Yo/(\CC^\times)^2$ is of dimension $\binom{n+1}{2}-1$.
Since $\Xo'$ is dense in $X'$, they have the same dimension.

The dimension of $\IG(n-1,2n)$ is a standard computation; the isotropic Grassmannian is a homogeneous space for $\SO(2n)$, which has dimension $\binom{2n}{2}$, and the stabilizer of any point has dimension $3\binom{n}{2}+1$.
\end{proof}

We now begin to show that $\IG(n-1,2n)$ obeys the equations~(\ref{trbi}).
Let $j<k<\ell$ be indices between $1$ and $n$, and let $I\in \Pi$ be a vertex with $i_j=i_k=i_\ell=0$.
Let $\Spin^{\{jk\ell\}}(6)\subset\Spin(2n)$ denote the copy of $\Spin(6)$ corresponding to the coordinates $j,k,\ell$. 
The key point will be to construct a projection $\pi_{\otimes} : S_+ \otimes S_- \to S_+(6) \otimes S_-(6)$, while keeping careful track of signs. 

Set $v^\vee_I:=\epsilon^\vee_{i_r}\wedge \epsilon^\vee_{i_{r-1}}\wedge\ldots\wedge \epsilon^\vee_{i_1}$, where $i_1<\ldots<i_r$ are the indices such that $I_i=1$.
Write $p$ for the projection from $S$ onto the $\Spin^{\{jk\ell\}}(6)$ subrepresentation spanned by $\bigwedge^{\bullet} \Span(\epsilon_j, \epsilon_k, \epsilon_{\ell})$.
Then set $\pi(v) := p(v^\vee_I\inn s)$.
The restrictions of $\pi$ to $S_+$ and  $S_-$ then provide maps
\[
\begin{split}
\pi_+:S_+\to S_+^{\{jk\ell\}}(6),\qquad \pi_-:S_-\to S_-^{\{jk\ell\}}(6)\qquad \text{if}\,\, I \in  \Pi^\even,\\
\pi_+:S_-\to S_+^{\{jk\ell\}}(6),\qquad \pi_-:S_+\to S_-^{\{jk\ell\}}(6)\qquad \text{if}\,\, I \in  \Pi^\odd.
\end{split}
\]
It is easy to check that $\pi_+$ and $\pi_-$ are $\Spin^{\{jk\ell\}}(6)$ equivariant.

Our main difficulty is that $\pi_-(v_{I+e_j})$ may be either $v_{100}$ or $-v_{100}$, depending on the choice of $I$, and similarly for the other seven coordinates.
However, our sign difficulties cancel when we tensor $\pi_+$ and $\pi_-$ together.
Let $a$, $b$, $c$, $d$ be the number of nonzero coordinates of $I$ in $[1,j)$, $(j,k)$, $(k,\ell)$ and $(\ell, n]$ respectively.
Then careful compuation gives the following lemma:

\begin{lemma}\label{qvs}\dontshow{qvs}
We have
\begin{alignat*}{10}
&\pi_{+}(v_I) &=& & v_{000} & \quad \quad &\pi_{-}(v_{I+ e_j+ e_k+ e_{\ell}}) &=& (-1)^{b+d} & v_{111} \\
&\pi_{+}(v_{I+ e_k+ e_{\ell}}) &=& (-1)^{c} & v_{011} & \quad \quad &\pi_{-}(v_{I+ e_j}) &=& (-1)^{b+c+d} & v_{100} \\ 
&\pi_{+}(v_{I+ e_j+ e_{\ell}}) &=& (-1)^{b+c} & v_{101} & \quad \quad &\pi_{-}(v_{I+ e_k}) &=& (-1)^{c+d} & v_{010} \\ 
&\pi_{+}(v_{I+ e_j+ e_{k}}) &=& (-1)^{b} & v_{110} & \quad \quad &\pi_{-}(v_{I+ e_{\ell}}) &=& (-1)^{d} & v_{001} \\ 
\end{alignat*}
and therefore
\begin{align*}
(\pi_{+} \otimes \pi_{-}) (v_{I}\,{\otimes}\, v_{I+ e_j+ e_k+ e_\ell}) &= (-1)^{b+d} v_{000}\,{\otimes}\, v_{111} \\
(\pi_{+} \otimes \pi_{-})(v_{I+ e_j+ e_\ell}\,{\otimes}\, v_{I+ e_k}) &= (-1)^{b+d} v_{101}\,{\otimes}\, v_{010}  \\
(\pi_{+} \otimes \pi_{-})(v_{I+ e_j+ e_k}\,{\otimes}\, v_{I+ e_\ell}) &=  (-1)^{b+d} v_{110}\,{\otimes}\, v_{001} \\
(\pi_{+} \otimes \pi_{-})(v_{I+ e_k+ e_\ell}\,{\otimes}\, v_{I+ e_j}) &= (-1)^{b+d} v_{011}\,{\otimes}\, v_{100}.
\end{align*}
\end{lemma}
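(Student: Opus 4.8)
The plan is to reduce the whole lemma --- the eight ``$\pi_\pm$'' formulas and the four tensor identities --- to a single sign computation for a contraction. Concretely, for subsets $I\subseteq J$ of $[n]$, writing $K=J\setminus I$, I expect
\[
v^\vee_I \inn v_J \;=\; \mathrm{sgn}(I,K)\, v_K, \qquad \mathrm{sgn}(I,K):=(-1)^{\#\{(i,k)\,:\,i\in I,\ k\in K,\ i>k\}},
\]
while $v^\vee_I\inn v_J=0$ when $I\not\subseteq J$. To prove this I would unwind the definition $v^\vee_I=\epsilon^\vee_{i_r}\wedge\cdots\wedge\epsilon^\vee_{i_1}$ --- note the reversed order --- as the composition $\iota_{\epsilon^\vee_{i_r}}\circ\cdots\circ\iota_{\epsilon^\vee_{i_1}}$ of contraction operators acting through the Clifford module structure, applied in increasing order of index, and track the signs one step at a time. (If some $i_m\notin J$, then taking $m$ minimal with this property, the first $m-1$ contractions remove $\epsilon_{i_1},\dots,\epsilon_{i_{m-1}}$ and the $m$-th contracts against a wedge not containing $\epsilon_{i_m}$, giving $0$.) When $I\subseteq J$, at the $m$-th step the current form is the increasing wedge over $J\setminus\{i_1,\dots,i_{m-1}\}$, in which $\epsilon_{i_m}$ occupies position $1+\#\{k\in K:k<i_m\}$ --- the smaller indices $i_1,\dots,i_{m-1}$ having already been deleted --- so contracting it produces the sign $(-1)^{\#\{k\in K:k<i_m\}}$; multiplying over $m=1,\dots,r$ gives the displayed formula. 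The reversed ordering in the definition of $v^\vee_I$ is precisely what arranges that each step contributes a sign depending only on $K$, and getting this bookkeeping right is the one place where care is required.

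With that formula in hand the rest is mechanical. Fix $I$ with $i_j=i_k=i_\ell=0$. Each of the eight cube vertices is $I\cup K$ for a unique $K\subseteq\{j,k,\ell\}$, the vector $v^\vee_I\inn v_{I\cup K}=\mathrm{sgn}(I,K)\,v_K$ already lies in $\bwedge^\bullet\Span(\epsilon_j,\epsilon_k,\epsilon_\ell)$ so $p$ fixes it, and under the identification with $S(6)$ the vector $v_K$ becomes the basis vector indexed by the incidence vector of $K$ in $(j,k,\ell)$. Since $I$ avoids $j,k,\ell$ one has $\#\{i\in I:i>j\}\equiv b+c+d$, $\#\{i\in I:i>k\}\equiv c+d$, and $\#\{i\in I:i>\ell\}\equiv d\pmod 2$, and $\#\{(i,m):i\in I,\ m\in K,\ i>m\}=\sum_{m\in K}\#\{i\in I:i>m\}$; evaluating this sum for the eight choices of $K$ reproduces the eight stated formulas for $\pi_+$ and $\pi_-$.

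For the tensor identities, I would observe that the four pairs occurring are exactly $v_{I\cup K}\otimes v_{I\cup K'}$ with $K\sqcup K'=\{j,k,\ell\}$, on which $\pi_+\otimes\pi_-$ carries the sign $\mathrm{sgn}(I,K)\,\mathrm{sgn}(I,K')$; its exponent collapses to $\#\{(i,m):i\in I,\ m\in\{j,k,\ell\},\ i>m\}=(b+c+d)+(c+d)+d\equiv b+d\pmod 2$, independent of how the triple is split, which is the clean reason all four tensor signs agree. Finally I would note that the argument never used the parity of $|I|$: the maps $\pi_+,\pi_-$ land in $S_+(6),S_-(6)$ by definition, $v_K$ lies in $S_+(6)$ exactly when $|K|$ is even, and this matches the four ``$\pi_+$'' vertices ($|K|\in\{0,2\}$) and the four ``$\pi_-$'' vertices ($|K|\in\{1,3\}$), so the same computation handles both $I\in\Pi^\even$ and $I\in\Pi^\odd$. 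The only genuine obstacle is pinning down the sign in the single contraction formula; everything downstream of it is routine.
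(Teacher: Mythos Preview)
Your proposal is correct. The paper does not give a proof of this lemma at all---it merely says ``careful computation gives the following lemma''---so you are supplying precisely the omitted verification. Your contraction formula $v^\vee_I\inn v_{I\cup K}=\mathrm{sgn}(I,K)\,v_K$ with $\mathrm{sgn}(I,K)=(-1)^{\#\{(i,k):i\in I,\,k\in K,\,i>k\}}$ is right (the reversed ordering in $v^\vee_I$ is exactly what makes each step contribute a sign counted against $K$ alone), and the eight specializations check out. The observation that for complementary $K\sqcup K'=\{j,k,\ell\}$ the product $\mathrm{sgn}(I,K)\,\mathrm{sgn}(I,K')$ depends only on $I$ and $\{j,k,\ell\}$, hence equals $(-1)^{b+d}$ in every case, is a clean conceptual explanation of why all four tensor signs coincide; this goes slightly beyond the brute-force verification the paper's phrasing suggests.
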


Define $\tilde{\pi}_{\otimes}$ by $\pi_+ \otimes \pi_-$ when $I \in \Pi^{\even}$ and by $(\pi_+ \otimes \pi_-) \circ \mathrm{flip}$ when $I \in \Pi^{\odd}$.
Since $\pi_+$ and $\pi_-$ are $S^{jk \ell}(6)$ equivariant, so is $\tilde{\pi}_{\otimes}$.
Set $\pi_{\otimes} = (-1)^{b+d} \tilde{\pi}_{\otimes}$.
We summarize our above computations:

\begin{Proposition}  \label{pisummary}
 The map $\pi_{\otimes} : S_+ \otimes S_- \to S_+(6) \otimes S_-(6)$ is a $\Spin^{\{jk \ell\}}(6)$ equivariant map.
When $I \in \Pi^{\even}$, this maps satisfies
\begin{align*}
\pi_{\otimes}(v_I \otimes v_{I +  e_j +  e_k +  e_{\ell}}) &= v_{000} \otimes v_{111} & \pi_{\otimes}(v_{I+ e_j} \otimes v_{I +  e_k +  e_{\ell}}) &= v_{100} \otimes v_{011} \\ 
\pi_{\otimes}(v_{I+ e_k} \otimes v_{I +  e_j +  e_{\ell}}) &= v_{010} \otimes v_{101} & \pi_{\otimes}(v_{I+ e_{\ell}} \otimes v_{I +  e_j +  e_k}) &= v_{001} \otimes v_{110}  
\end{align*}
When $I \in \Pi^{\odd}$, the same relations hold except that the tensor factors on the left hand side must be switched (in order to put the even term first).
\end{Proposition}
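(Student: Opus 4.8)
The plan is to assemble Proposition~\ref{pisummary} directly from the already-completed sign bookkeeping of Lemma~\ref{qvs}, so that essentially no new computation is needed: the work has been front-loaded into that lemma, and this proposition is the clean packaging. First I would recall the two cases of the construction. When $I\in\Pi^{\even}$, we have $\pi_{\otimes}=(-1)^{b+d}\tilde\pi_\otimes=(-1)^{b+d}(\pi_+\otimes\pi_-)$; when $I\in\Pi^{\odd}$, we have $\pi_{\otimes}=(-1)^{b+d}(\pi_+\otimes\pi_-)\circ\mathrm{flip}$. Equivariance is immediate in both cases: $\pi_+$ and $\pi_-$ are $\Spin^{\{jk\ell\}}(6)$-equivariant by the remark following their definition, the flip map is equivariant since $S_+\otimes S_-$ and $S_-\otimes S_+$ are isomorphic as representations via the swap, and multiplication by the global scalar $(-1)^{b+d}$ (which depends only on the fixed data $I,j,k,\ell$, not on the group element) is equivariant; so $\pi_{\otimes}$ is a composition of equivariant maps.

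Next I would read off the four identities in the even case. Applying the last display of Lemma~\ref{qvs}, each of the four basis tensor products $v_I\otimes v_{I+e_j+e_k+e_\ell}$, $v_{I+e_j+e_\ell}\otimes v_{I+e_k}$, $v_{I+e_j+e_k}\otimes v_{I+e_\ell}$, $v_{I+e_k+e_\ell}\otimes v_{I+e_j}$ is sent by $\pi_+\otimes\pi_-$ to $(-1)^{b+d}$ times the corresponding product of $\{v_{000},v_{011},v_{101},v_{110}\}\otimes\{v_{111},v_{010},v_{001},v_{100}\}$. Multiplying by the prefactor $(-1)^{b+d}$ kills the sign, since $(-1)^{b+d}\cdot(-1)^{b+d}=1$. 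This yields exactly
\[
\pi_{\otimes}(v_I\otimes v_{I+e_j+e_k+e_\ell})=v_{000}\otimes v_{111},\quad
\pi_{\otimes}(v_{I+e_j}\otimes v_{I+e_k+e_\ell})=v_{100}\otimes v_{011},
\]
and similarly for the remaining two, after reindexing the four rows of Lemma~\ref{qvs} to match the four displayed pairs in the proposition (note $v_{I+e_j+e_\ell}\otimes v_{I+e_k}\mapsto v_{101}\otimes v_{010}$ is the same as $v_{I+e_k}\otimes v_{I+e_j+e_\ell}\mapsto v_{010}\otimes v_{101}$ up to the swap of tensor factors, which is harmless here because we are naming basis vectors). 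For $I\in\Pi^{\odd}$, the only change is that $\tilde\pi_\otimes$ has an extra $\mathrm{flip}$ precomposed, which interchanges the two tensor slots; hence the same four identities hold with the left-hand tensor factors exchanged, which is precisely the statement that we reorder so the even-indexed vertex appears first.

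The main (and essentially only) obstacle is purely organizational: matching up the labeling conventions between the four rows of Lemma~\ref{qvs}, the four lines of its corollary display, and the four displayed equations in Proposition~\ref{pisummary}, while correctly tracking which of $v_{I+e_j+e_k}$ versus $v_{I+e_\ell}$ lands in $S_+(6)$ versus $S_-(6)$ and hence on which side of the tensor product. There is no genuine mathematical content beyond Lemma~\ref{qvs}; the proof is one sentence invoking equivariance of each factor and one sentence observing that $(-1)^{b+d}$ squares to $1$, followed by transcription of the four formulas. Accordingly I would keep the proof to a few lines: "Equivariance follows since $\pi_\pm$ are equivariant, the flip is equivariant, and $(-1)^{b+d}$ is a constant. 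The four formulas follow from Lemma~\ref{qvs} after multiplying by $(-1)^{b+d}$; in the odd case, the extra flip swaps the tensor factors."
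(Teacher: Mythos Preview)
Your proposal is correct and matches the paper's approach exactly: the paper introduces Proposition~\ref{pisummary} with the sentence ``We summarize our above computations,'' giving no further proof, so the content is precisely Lemma~\ref{qvs} together with the definition $\pi_\otimes=(-1)^{b+d}\tilde\pi_\otimes$ and the already-noted equivariance of $\pi_\pm$. Your observation that the organizational bookkeeping (matching tensor slots and swapping factors in the odd case) is the only thing to check is accurate.
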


We now show that $\IG(n-1,2n)$ obeys the equations~(\ref{trbi}).
Consider what $\pi_{\otimes}$ will do to $\bigwedge^{n-1}(V)$. 
The subspace of $\bigwedge^{n-1}(V)$ supported on the eight weights $I$, $I+ e_j$, etcetera is an irreducible representation of $S^{\{jk \ell\}}(6)$, isomorphic to $\bigwedge^2(\CC^6)$.
So the equivariant map $\pi_{\otimes}$ takes $\bigwedge^{n-1}(V)$ to the subspace $\bigwedge^2(\CC^6)$ of $S_+(6) \otimes S_-(6)$.
We know (Lemma~\ref{pxi}) that this subspace obeys the linear equation $x_{000} \otimes x_{111} - x_{100} \otimes x_{011} + x_{010} \otimes x_{101} - x_{001} \otimes x_{110} = 0$. 
So Proposition~\ref{pisummary} shows that $\bigwedge^{n-1}(V)$ obeys $x_{I} \otimes x_{I +  e_j +  e_k +  e_{\ell}} - x_{I+ e_j} \otimes x_{I +  e_k +  e_{\ell}} + x_{I+ e_k} \otimes x_{I +  e_j +  e_{\ell}} - x_{I+ e_{\ell}} \otimes x_{I +  e_j +  e_{k}}$.
(If $I \in \Pi^{\odd}$, then we should switch the order of the tensor factors so that the even term comes first.)
Since $\IG(n-1, 2n)$ is in $\PP(\bigwedge^{n-1}(V))$, we see that $\IG(n-1,2n)$ obeys~(\ref{trbi}).

We now complete the proof of Theorem~\ref{Ttm}. 
By Theorem~\ref{irred} and Lemma~\ref{codi}, the varieties $\Xo'$ and $\IG(n-1,2n)\cap (\CC^\times)^\Pi$
are irreducible and of the same dimension.
We have just computed that $\IG(n-1,2n)\cap (\CC^\times)^\Pi$ obeys the defining equations of $\Xo'$.
So $\IG(n-1,2n)\cap (\CC^\times)^\Pi = \Xo'$
Then $X'$, which is defined as the closure of $\Xo'$, must also be the closure of $\IG(n-1,2n)\cap (\CC^\times)^\Pi$.
This closure is $\IG(n-1,2n)$, completing the proof of Theorem~\ref{Ttm}.

\section{The tropical cube recurrence}

The tropical version of the recurrence (\ref{trc}) is obtained by
replacing the operations plus and times by max and plus respectively.
Namely, we get \dontshow{trp}
\begin{equation}\begin{split}\label{trp}
x_{I+e_j+e_\ell}+ x_{I+e_k} =  \max\big(&x_I + x_{I+e_j+e_k+e_\ell},\,\, x_{I+e_j+e_k} + x_{I+e_\ell},\,\, x_{I+e_k+e_\ell}+ x_{I+e_j}\big) 
\end{split}\end{equation}
where again we assume that $1 \leq j<k<\ell \leq n$.
Since plus distributes over max, and since all the formulas encountered in its proof are subtraction free, 
Theorem~\ref{h:i}  extends without problem to the tropical situation. We see that the set of solutions to the tropical cube recurrence is a polyhedral fan of dimension $\sum_{i<j} a_i a_j + \sum_i a_i +1$ which, for every tiling $T$, has a parametrization by $\RR^{\vert(T)}$ given by continuous piecewise linear functions. 

But the tropical recurrence exhibits a new feature:
certain inequalities are propagated by the recurrence.
Recall that $C=\prod_{i=1}^n[0,a_i]$.
Let $W$ be a hyperplane \dontshow{rW}
\begin{equation}\label{rW}
W=\{(i_1,\ldots,i_n)\in C\,|\,i_s=c\},
\end{equation}
where $1 \leq s \leq n$ and $1 \leq c \leq a_s-1$.

Let $\{x_I \}_{I\in \Pi}$ be a collection of numbers satisfying the recurrence (\ref{trp}).
Given an edge $\alpha=(I,I+e_i)$ in $W$, we say that the $x_I $ satisfy the $W$-inequalities at $\alpha$ if \dontshow{suk}
\begin{equation}\label{suk}
\begin{split}
x_I +x_{I+e_i}&\ge x_{I+e_s}+x_{I+e_i-e_s},\\
x_I +x_{I+e_i}&\ge x_{I-e_s}+x_{I+e_i+e_s}.
\end{split}
\end{equation}

Let $\Delta:=\pi^{-1}(\partial P)$ be the common boundary of all the tilings of $P$.
A {\em cutcurve} for $W$ is a path $\gamma=(\gamma(0),\gamma(1),\ldots)$ that connects the only two points 
$(a_1, \dots, a_{s-1}, c, 0, \dots, 0)$, $(0, \dots, 0, c, a_{s+1}, \dots, a_n)$
of $\Delta\cap W$, and such that 
$\gamma(t)-\gamma(t-1)$ only takes the values
$e_{s+1}$, $e_{s+2},\ldots,e_n$, $-e_1$, $-e_2,\ldots,-e_{s-1}$.
In other words, $\gamma$ is a geodesic for the taxi-cab metric on $C$. 
Geometrically, if $\gamma$ is a cutcurve contained among the edges of a tiling $T$ of $P$, 
then $\pi(\gamma)$ lies between the $c^{\textrm{th}}$ and $(c+1)^{\textrm{st}}$ pseudoline in the $s^{\textrm{th}}$ direction.

\[
\psfrag{ABCDEFGHIJKLMNOPQRSTUVWXYZABCDEFGHIJKLMNOPQRSTUVWXYZABCDEFGHIJKLMNOPQRSTUVWXYZ}
{\parbox{10cm}{A cutcurve (in bold) and the two parallel \\ pseudo-lines (dashed) between which it lies.}}
\epsfig{file=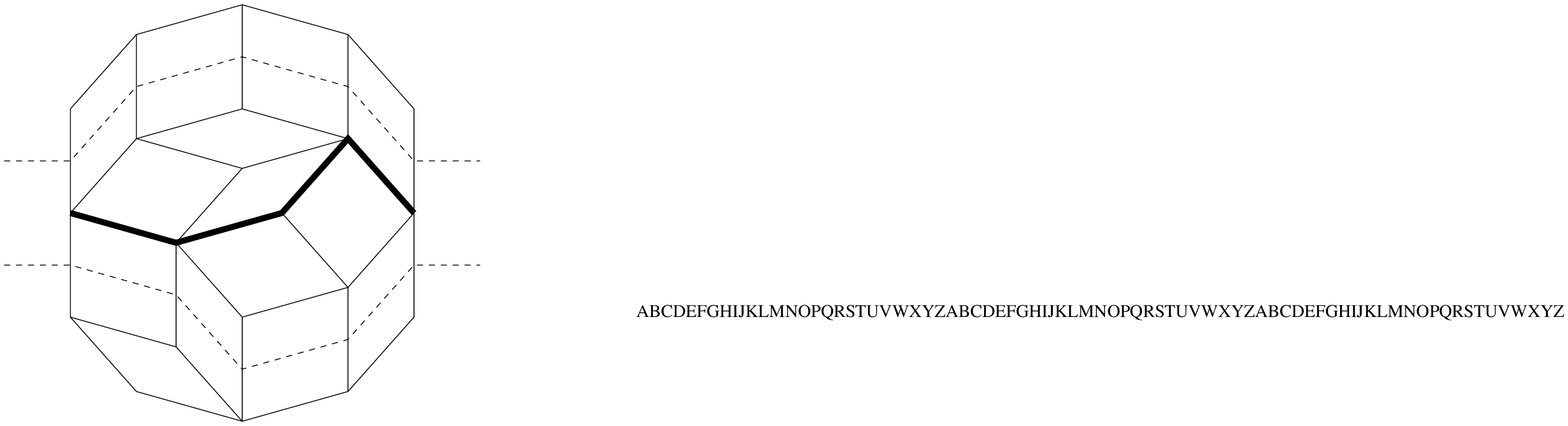, width=10 cm}
\]


Let us call {\em elementary move} the operation of replacing a cutcurve $\gamma$ by another $\gamma'$, given by \dontshow{emm}
\begin{equation}\label{emm}
\gamma'(t)=\begin{cases}\gamma(t)&\text{if}\quad t\not= t_0\\
\gamma(t-1)+\gamma(t+1)-\gamma(t) &\text{if}\quad t= t_0,
\end{cases}
\end{equation}
where $t_0$ is an integer such that $\gamma(t_0-1)$, $\gamma(t_0)$, $\gamma(t_0+1)$ are not collinear.
An elementary move is like a small homotopy of $\gamma$ that goes over a square of $C(A)$.
It is an easy exercise to show that any two cutcurves are connected by a sequence of elementary moves.

\begin{Proposition}
Let  $\{x_I\}_{I\in \Pi}$ be a collection of numbers satisfying the tropical cube recurrence (\ref{trp}).
Let $W$ be as in (\ref{rW}), and let $\gamma$ be a cutcurve for $W$.
Suppose that the $W$-inequalities (\ref{suk}) are satisfied on all the edges of $\gamma$.
Then they are satisfied on all the edges of $W$.
\end{Proposition}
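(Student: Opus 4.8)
The plan is to propagate the $W$-inequalities one elementary move at a time. Since any two cutcurves are connected by a sequence of elementary moves, and since every edge of $W$ lies on \emph{some} cutcurve, it suffices to prove the following local statement: if $\gamma$ and $\gamma'$ are cutcurves differing by a single elementary move at $t_0$, and the $W$-inequalities hold on all edges of $\gamma$, then they hold on all edges of $\gamma'$. All edges of $\gamma'$ except the (at most) two new ones at position $t_0$ are shared with $\gamma$, so the whole problem reduces to checking the new edges, assuming the $W$-inequalities at the old edges of the little square $\sigma$ of $C(A)$ across which the move passes.

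Next I would set up coordinates for this square. The elementary move replaces the corner $\gamma(t_0)$ of $\sigma$ by the opposite corner $\gamma(t_0-1)+\gamma(t_0+1)-\gamma(t_0)$; the two steps $\gamma(t_0)-\gamma(t_0-1)$ and $\gamma(t_0+1)-\gamma(t_0)$ are of the form $\pm e_p$, $\pm e_q$ with the sign pattern allowed for cutcurves (either $+e$ with index $>s$ or $-e$ with index $<s$). Write $J$ for the base vertex of $\sigma$ inside $W$, so the four vertices of $\sigma$ are $J$, $J+e_p$, $J+e_q$, $J+e_p+e_q$ (up to relabeling signs), and the old edges of $\gamma$ through $\sigma$ are two adjacent sides while the new edges of $\gamma'$ are the other two adjacent sides. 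The hypothesis gives the $W$-inequalities \eqref{suk} at the two old sides; I want them at the two new sides. Because all four vertices of $\sigma$ lie in $W$ (they all have $s$-coordinate $c$), the vertices $J\pm e_s$ appearing in \eqref{suk} make sense, and the relevant instances of the tropical recurrence \eqref{trp} are exactly those with the triple of indices $\{p,q,s\}$ in the appropriate order, applied at the four faces of the cube spanned by $e_p,e_q,e_s$ based at $J-e_s$ or $J$.

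The heart of the argument is then a purely tropical (min/max-linear) inequality manipulation: combining the two given inequalities of \eqref{suk} at the old edges with the two instances of \eqref{trp} (one on the $+e_s$ side, one on the $-e_s$ side of the cube) to deduce the two inequalities of \eqref{suk} at the new edges. Concretely, each instance of \eqref{trp} expresses a sum like $x_{J+e_p+e_s}+x_{J+e_q}$ (or the analogous configuration) as a max of three sums, and one bounds $x_{J+e_s}+x_{J+e_p+e_q-e_s}$-type quantities below by picking out the appropriate term of that max; adding the hypothesis inequality and canceling common terms yields the desired new inequality. This is the step I expect to be the main obstacle --- not because it is deep, but because one must carefully track which of the eight cube vertices plays which role under the various sign patterns of the elementary move (the four cases $p,q$ both $>s$; $p>s>q$; etc.), and verify that in every case both inequalities of \eqref{suk} come out. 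I would organize it by symmetry: the move is symmetric under swapping $e_s\leftrightarrow -e_s$ (which swaps the two inequalities in \eqref{suk}) and under the obvious reflection swapping the roles of $p$ and $q$, so after reductions only one representative configuration needs the explicit computation, which is a two-line tropical identity of the same flavor as the identities used in the proof of Theorem~\ref{tWt}. Finally, since elementary moves connect all cutcurves and the edges of $W$ are exhausted by cutcurves, induction on the length of a connecting sequence of moves completes the proof. $\square$
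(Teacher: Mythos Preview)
Your overall strategy---reduce to a single elementary move, set up the cube on either side of $W$, invoke the two relevant instances of the tropical recurrence, and exploit the symmetries---is exactly what the paper does. The reduction and the symmetry arguments are fine.

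The one place where your plan is too optimistic is the ``two-line tropical identity'' at the end. After symmetry, there are still \emph{two} new inequalities to check (in the paper's notation, $(E)$ and $(F)$), and they are not of the same difficulty. The first one, $(E)$, really is immediate: use the recurrence to write $p\odot s=(v\odot r\,\oplus\,z\odot q\,\oplus\,y\odot p)\odot w^{(-1)}$, keep the $z\odot q$ term, and apply the single old inequality $(A)$. But the second one, $(F)$, does \emph{not} follow from one old inequality and one term of the max. The paper extracts the $v\odot r$ term from the same recurrence and then uses \emph{three} of the four old inequalities $(A)$, $(B)$, $(C)$ to produce three separate lower bounds for $p\odot s$, namely $a\odot v\odot r\odot q^{(-1)}$, $p\odot v\odot c\odot q^{(-1)}$, and $b\odot v\odot s\odot q^{(-1)}$; only after ``adding'' these with the idempotent law $p\odot s\,\oplus\,p\odot s=p\odot s$ does the second recurrence $q\odot d=p\odot c\,\oplus\,s\odot b\,\oplus\,r\odot a$ collapse the right-hand side to $v\odot d$. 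So your sentence ``adding the hypothesis inequality and canceling common terms'' should read ``adding \emph{three} hypothesis inequalities and using idempotency of $\oplus$''---this is the genuine content of the local step, and it is worth writing out rather than asserting.

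A minor point: your proposed case split according to whether $p,q$ are above or below $s$ is not really needed. Once you name the twelve numbers on the three levels $-e_s,0,+e_s$ of the little $e_p\times e_q\times e_s$ cube, the two recurrence instances take a uniform shape and the computation goes through without branching.
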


Henceforth, to emphasize the similarity between the tropical and the usual recurrences, 
we will use to the notations $x \odot y:=x+y$, $x \ooplus y:=\max(x,y)$, $x^{(-1)}:=-x$.

\begin{proof}
We show that for any cutcurve $\gamma'$, the $W$-inequalities are satisfied on the edges of $\gamma'$.
Since any two cutcurves can be joined by a sequence of elementary moves, it is enough to do the case when 
$\gamma$ and $\gamma'$ are separated by a single elementary move.

Let $t_0$ be as in (\ref{emm}), and let
$a=x_{\gamma(t_0-1)+e_s}$, 
$b=x_{\gamma(t_0)+e_s}$, 
$c=x_{\gamma(t_0+1)+e_s}$, 
$d=x_{\gamma'(t_0)+e_s}$, 
$p=x_{\gamma(t_0-1)}$, 
$q=x_{\gamma(t_0)}$, 
$r=x_{\gamma(t_0+1)}$, 
$s=x_{\gamma'(t_0)}$, 
$v=x_{\gamma(t_0-1)-e_s}$, 
$w=x_{\gamma(t_0)-e_s}$, 
$y=x_{\gamma(t_0+1)-e_s}$, 
$z=x_{\gamma'(t_0)-e_s}$.
These numbers satisfy \dontshow{mabc}
\begin{equation}\label{mabc}
w \odot s = v \odot r \ooplus z \odot q \ooplus y \odot p 
\quad \hbox{and} \quad q \odot d = p \odot c \ooplus s \odot b \ooplus r \odot a
\end{equation}
Their positions relative to $\gamma$ and $\gamma'$ is best understood via the following picture:

\vspace{.3cm}
\psfrag{a}{$a$}
\psfrag{b}{$d$}
\psfrag{c}{$c$}
\psfrag{d}{$b$}
\psfrag{p}{$p$}
\psfrag{q}{$s$}
\psfrag{r}{$r$}
\psfrag{s}{$q$}
\psfrag{v}{$v$}
\psfrag{w}{$z$}
\psfrag{y}{$y$}
\psfrag{z}{$w$}
\psfrag{gg}{$\gamma'$}
\psfrag{ggp}{$\gamma$}
\centerline{\epsfig{file=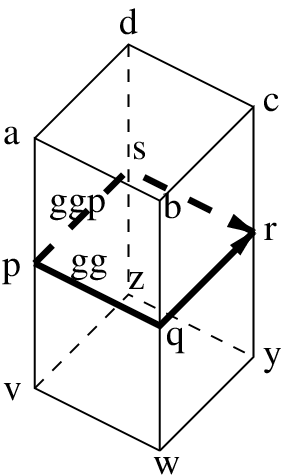, width=3cm}}
\vspace{.3cm}

We want to show that the $W$-inequalities on $\gamma$ and on $\gamma'$ are equivalent to each other.
The ones for $\gamma$ read:
\begin{eqnarray*}
       (A)\quad p \odot q \ge a \odot w,
& \qquad & (B) \quad p \odot q \ge b \odot v,\\
       (C)\quad q \odot r \ge c \odot w,
& \qquad & (D)\quad q \odot r \ge b \odot y,
\end{eqnarray*}
and the ones for $\gamma'$ read:
\begin{eqnarray*}
       (E)\quad p \odot s \ge a \odot z,
& \qquad & (F) \quad p \odot s \ge d \odot v,\\
       (G)\quad s \odot r \ge c \odot z,
& \qquad & (H)\quad s \odot r \ge d \odot y.
\end{eqnarray*}
It is enough to show that the inequalities for $\gamma$ imply those for $\gamma'$, as the conditions are symmetric in exchanging $\gamma$ and $\gamma'$. Also, by reflecting our picture over the $bqwdsz$ plane, 
it is enough to show that $(A)$, $(B)$, $(C)$ and $(D)$ together imply $(E)$ and $(F)$.

We begin by showing that $(A)$ implies $(E)$. We have
\[
\begin{split}
p \odot s  =  p \odot \big( 
v \odot r \ooplus  z \odot q \ooplus & y  \odot p \big)   \odot w^{(-1)} \\
& \geq   p \odot z \odot q \odot w^{(-1)} \geq a \odot z \odot w \odot w^{(-1)} = a \odot z
\end{split}
\]
where the second inequality is by $(A)$.

We now show that $(A)$, $(B)$ and $(C)$ imply $(F)$. We have
$$p \odot s = p \odot \big( v \odot r \ooplus z \odot q \ooplus y \odot p \big) \odot w^{(-1)} \geq p \odot v \odot r \odot w^{(-1)}.$$
Using $(A)$ and $(C)$ respectively, we have
\begin{equation}
p \odot s \geq  p \odot v \odot r \odot w^{(-1)} \geq a \odot v \odot r \odot q^{(-1)} \label{thing1}
\end{equation}
and
\begin{equation}
p \odot s \geq  p \odot v \odot r \odot w^{(-1)} \geq p \odot v \odot c \odot q^{(-1)}. \label{thing2}
\end{equation}
Also, by $(B)$, we have
\begin{equation} p \odot s \geq b \odot v \odot s \odot q^{(-1)} \label{thing3}. \end{equation}
``Adding" equations~(\ref{thing1}),~(\ref{thing2})  and~(\ref{thing3}) and using the relation $p \odot s \ooplus p \odot s=p \odot s$, we obtain
$$p \odot s \geq v \odot \left( a \odot r \ooplus p \odot c \ooplus b \odot s \right) \odot q^{(-1)} = v \odot d.$$
This is the desired relation $(F)$.
\end{proof}

%


\section{Speculations and Remarks}

We view this paper as an invitation. We have shown that the cube recurrence exhibits many of the combinatorial and algebraic features of the octahedron recurrence, yet we have not discovered where the cube recurrence comes from, nor do we imagine that we have found its most interesting properties. We close with some speculations regarding lines of research to pursue.

For an algebraic geometer of a classical inclination, a natural question is to recognize the varieties $X(A)$. We have shown that in some cases, these are isotropic Grassmannians. We suspect that they are always, in some way, related to the Lie groups of type $D$. Here are some questions to focus the investigation -- are these varieties smooth? Do they have any symmetries other than the obvious permutation and rescaling of coordinates?

In the case of the octahedron recurrence, the Laurentness property is a special case of a Laurentness property for cluster algebras. From the cluster algebra perspective, this generalization amounts to finding a recurrence defined on $N$-tuples of positive real numbers where, in each step of the recurrence, one replaces $(x_1, x_2, \dots, x_N)$ by $(f(x_2, \dots, x_N)/x_1, x_2, \dots, x_N)$, for some polynomial $f$. The cube recurrence only allows us to replace those variables which correspond to vertices of degree three in a tiling; if we are to discover something like the theory of cluster algebras, we should be able to replace any of the variables in such a manner. Our computation of the labels on the edges of the Caterpillar tree in Section~\ref{s:L} is, from this perspective, describing how to travel one step away from the moves in the cube recurrence. We pose the challenge of continuing to make many ``generalized flips" away from trivalent vertices. What is the rule that extends equation~(\ref{EdgeLabel})?

In the case of the octahedron recurrence, the fact that the tropical octahedron recurrence propagates certain inequalities allows one to use the tropical octahedron recurrence for computations with $\GL_n$ representations. (See~\cite{KTW} and~\cite{HK}.) Is there a similar connection between the tropical cube recurrence and representation theory? In a similar vein, the tropical octahedron recurrence has been shown in~\cite{HK} to be a disguised version of \emph{jeu d' taquin}. Is the tropical cube recurrence a disguised version of some combinatorial algorithm which is already known, or of one that should be?

Finally, to be extremely optimistic, one could try to generalize the results of~\cite{CarrSpey} and give a combinatorial formula for the Laurent polynomials produced by the multidimensional cube recurrence. This is probably extremely difficult because it should be harder than the corresponding problem for cluster algebras, which has been open for six years. Nonetheless, the authors, together with Dylan Thurston, have made some partial progress. A more tractable, still interesting question, might be to determine the Newton polytopes of these Laurent polynomials. 

\raggedright  

\thebibliography{99}

\bibitem{BB}
A. Bj\"orner and F. Brenti \emph{Combinatorics of Coxeter Groups} Graduate Texts in Mathematics \textbf{231} (2005) Springer-Verlag New York

\bibitem{CarrSpey}
G. Carroll and D. Speyer, \emph{The Cube Recurrence} Elec. Jour. of Comb., \textbf{11} (2004) \#R73

\bibitem{Elnit}
S. Elnitsky, \emph{Rhombic Tilings of Polygons and Classes of Reduced Words in Coxeter Groups}, Journal of Comb. Thry. Ser. A, \textbf{77} (1997), 193--221.

\bibitem{FG}
V. Fock and A. Goncharov, \emph{Moduli Space of Local Systems and Higher Teichm\"uller Theory}, Publications Math\'ematiques de L'IH\'ES, \textbf{103}, no. 1, (2006)

\bibitem{FZ}
S. Fomin and A. Zelevinsky, \emph{The Laurent Phenomenon}, Adv. Applied Math.  \textbf{28} (2002), no. 2, 119--144

\bibitem{HK}
A. Henriques, J. Kamnitzer \emph{The octahedron recurrence and $\mathfrak{gl}_n$-crystals}, Adv. Math.  \textbf{206} (2006), no. 1, 211--249.

\bibitem{Kenyon}
R. Kenyon, \emph{Tiling a Polygon with Parallelograms} Algorithmica \textbf{9} (1993), 382--397.

\bibitem{KTW}
A. Knutson, T. Tao and C. Woodward, \emph{A positive proof of the Littlewood-Richardson rule using the octahedron recurrence}, Electron. J. Combin. \textbf{11} (2004), \#R61

\bibitem{LF}
J. Folkman and J. Lawrence, \emph{Oriented matroids},  J. Combin. Theory Ser. B  \textbf{25}  (1978), no. 2, 199--236.

\bibitem{RichZieg}
J. Richter-Gebert and G. Ziegler, \emph{Zonotopal tilings and the Bohne-Dress theorem} Contemp. Math., \textbf{178} (1994) 212--232

\bibitem{Ringel}
G. Ringel, \emph{Teilungen der Ebene durch Geraden oder topologische Geraden} Math. Z. \textbf{64} 79--102 (1956).

\bibitem{SSV}
B. Shapiro, M. Shapiro and A. Vainshtein, \emph{Connected components in the intersection of two open opposite Schubert cells in $\SL_n/B$}, Internat. Math. Res. Notices, \textbf{10} (1997) 469--493

\bibitem{Speyer}
D. Speyer, \emph{Perfect Matchings and the Octahedron Recurrence} Journal of Algebraic Combinatorics, \textbf{25} no. 3 (2007), p. 309--348

\bibitem{SturmZieg}
B. Sturmfels and G. Ziegler, \emph{Extension Spaces of Oriented Matroids} Disc. and Comp. Geom. \textbf{10} (1993) 23--45

\bibitem{Zieg}
G. Ziegler, \emph{Lectures on Polytopes} Graduate Texts in Mathematics \textbf{152} 1995 Springer-Verlag  New York.

\bibitem{ZiegHB}
G. Ziegler, \emph{Higher bruhat orders and cyclic hyperplane arrangements} Topology \textbf{32} n. 2, (1993) 259--279. 

\end{document}